\numberwithin{equation}{section}
\theoremstyle{plain}
\newtheorem{theorem}{Theorem}
\newtheorem{lemma}[theorem]{Lemma}
\theoremstyle{remark}
\newtheorem{assumption}{Assumption}
\newtheorem{example}[theorem]{Example}
\newtheorem{remark}[theorem]{Remark}
\newcommand{\keywords}[1]{\textbf{\textit{Key words:}} #1}
\newcommand{\mathsubjclass}[1]{\textbf{\textit{Mathematics Subject Classification:}} #1}
\def\rmd{\mathrm{d}}
\def\Id{\mathrm{Id}}
\def\rmV{\mathrm{V}}
\def\rmW{\mathrm{W}}
\def\rc{\mathrm{rc}}
\def\sc{\mathrm{sc}}
\def\Law{\mathrm{Law}}
\def\msh{\mathsf{H}}
\newcommand{\Var}{\mathrm{Var}}
\newcommand{\1}{\mathbbm{1}}
\title{Global contractivity for Langevin dynamics with distribution-dependent forces and uniform in time propagation of chaos}
\author{Katharina Schuh \thanks{Universit\"at Bonn, Institut f\"ur Angewandte Mathematik, Endenicher Allee 60, 53115 Bonn, Germany.\linebreak \href{mailto:katharina.schuh@uni-bonn.de}{katharina.schuh@uni-bonn.de}}} 
\date{}
\begin{document}
\maketitle


\begin{abstract}

We study the long-time behaviour of both the classical second-order Langevin dynamics and the nonlinear second-order Langevin dynamics of McKean-Vlasov type. By a coupling approach, we establish global contraction in an $L^1$ Wasserstein distance with an explicit dimension-free rate for pairwise weak interactions. 
For external forces corresponding to a $\kappa$-strongly convex potential, a contraction rate of order $\mathcal{O}(\sqrt{\kappa})$ is obtained in certain cases.
But the contraction result is not restricted to these forces. 
It rather includes multi-well potentials and non-gradient-type external forces as well as non-gradient-type repulsive and attractive interaction forces.
The proof is based on a novel distance function which combines two contraction results for large and small distances and uses a coupling approach adjusted to the distance. 
By applying a componentwise adaptation of the coupling we provide uniform in time propagation of chaos bounds for the corresponding mean-field particle system. \medskip \\
\keywords{Langevin dynamics, coupling, convergence to equilibrium, Wasserstein distance, Vlasov-Fokker-Planck equation, propagation of chaos} \\
\mathsubjclass{60H10, 60J60, 82C31}

\end{abstract}
%

\pagenumbering{arabic}

\section{Introduction}

In this paper, we are interested in the long-time behaviour of the Langevin diffusion $(X_t,Y_t)_{t\geq 0}$ of McKean-Vlasov type on $\mathbb{R}^{2d}$ given by the stochastic differential equation
\begin{equation} \label{eq:KFP}
\begin{cases}
&\rmd \bar{X}_t=\bar{Y}_t \rmd t
\\  &\rmd \bar{Y}_t=( u b(\bar{X}_t)+ u \int_{\mathbb{R}^d}\tilde{b}(\bar{X}_t,z)\bar{\mu}_t^x(\rmd z)-\gamma \bar{Y}_t)\rmd t +\sqrt{2\gamma u }\rmd B_t, \hspace{1cm}  \bar{\mu}_t^x=\Law(\bar{X}_t),
\end{cases}
\end{equation}
where $b:\mathbb{R}^d\to\mathbb{R}^d$ and $\tilde{b}:\mathbb{R}^{2d}\to\mathbb{R}^d$ are two Lipschitz continuous functions, $u,\gamma>0$ are two positive constants and
$(B_t)_{t\geq 0}$ is a $d$-dimensional standard Brownian motion. 
The functions $b$ and $\tilde{b}$ denote the \textit{external force} and the \textit{interaction force}, respectively. 
If $\tilde{b}\equiv0$, \eqref{eq:KFP} corresponds to the classical Langevin dynamics, which is also of particular interest and whose long-time behaviour will separately be studied in detail.
Existence of a solution and uniqueness in law hold provided the initial conditions have bounded second moments and $b$ and $\tilde{b}$ are Lipschitz continuous \cite[Theorem 2.2]{Me96}.

Equation \eqref{eq:KFP} is the probabilistic description of the \textit{Vlasov-Fokker-Planck equation} given by
\begin{equation} \label{eq:KFP_analytic}
\partial_t f_t(x,y)=\nabla_y\cdot\Big[\gamma \nabla_y f_t(x,y)+\gamma y f_t(x,y)+u\Big(b(x) +\int_{\mathbb{R}^d} \tilde{b}(x,z)\bar{\mu}_t^x(\rmd z)\Big)\bar{\mu}_t(x,y)\Big]-u\nabla_x \cdot [y f_t(x,y)],
\end{equation}
where $f_t$ is the time dependent density function on $\mathbb{R}^{2d}$ and $\bar{\mu}_t^x$ is the marginal distribution in the first component of $\bar{\mu}_t(\rmd x \rmd y)=f_t(x,y)\rmd x \rmd y$.
The solution $(f_t)_{t\geq 0}$ of \eqref{eq:KFP_analytic} describes the density function of the process $(\bar{X}_t,\bar{Y}_t)_{t\geq 0}$ which moves according to \eqref{eq:KFP}.
Often, $b$ and $\tilde{b}$ are of the form $b(x)=-\nabla \rmV(x)$ and $\tilde{b}(x,x')=-\nabla_x \rmW(x,x')$ for all $x,x'\in\mathbb{R}^d$ and for some functions $\rmV\in\mathcal{C}^1(\mathbb{R}^d)$ and $\rmW\in\mathcal{C}^1(\mathbb{R}^{2d})$, which are called \textit{confinement potential} and \textit{interaction potential}, respectively.

Besides the long-time behaviour of \eqref{eq:KFP}, we study the  mean-field particle system corresponding to \eqref{eq:KFP} with $N\in\mathbb{N}$ particles which is given by 
\begin{equation} \label{eq:KFP_meanfield_nongradient}
\begin{cases}
&\rmd X_t^{i,N}=Y_t^{i,N} \rmd t
\\ &\rmd Y_t^{i,N}=(u b(X_t^{i,N})+N^{-1}\sum_{j=1}^N u  \tilde{b}(X_t^{i,N},X_t^{j,N})-\gamma Y_t^{i,N})\rmd t +\sqrt{2\gamma u} \rmd B_t^{i}, \hspace{5mm} i=1,...,N.
\end{cases}
\end{equation}

We are interested in establish conditions on $b$ and $\tilde{b}$ such that for all $t\ge 0$ for $N\to\infty$ the law of the particles converges to the law of $(\bar{X}_t,\bar{Y}_t)$. This phenomenon was stated under the name \textit{propagation of chaos} and was first introduced by Kac for the Boltzmann equation in \cite{Ka56}. For finite time horizon, bounds on the difference between the law of the particle system and the law of $N$ independent solutions to \eqref{eq:KFP} are established by McKean \cite{Mc66} provided $b$ and $\tilde{b}$ are Lipschitz continuous and bounded. This result is further developed in e.g. \cite{Sz91,Me96}, see \cite{ChDi21,ChDi22} for a overview and the references therein.

The equations \eqref{eq:KFP}, \eqref{eq:KFP_analytic}, \eqref{eq:KFP_meanfield_nongradient} and its variants have various applications in physics. 
If $\tilde{b}\equiv0$, the solution of \eqref{eq:KFP} can be interpreted as a particle having a position $\bar{X}_t$ and a velocity $\bar{Y}_t$ and which moves according to the external force. 
The constant $\gamma>0$ corresponds to the friction parameter and $u>0$ denotes the inverse of the mass per particle.
Equation \eqref{eq:KFP_meanfield_nongradient} describes many particles whose moves are additionally determined by pairwise interactions given by the interaction force. 
Equation \eqref{eq:KFP_analytic} describes the limit distribution as the number of particles tends to infinity.

In the deep learning community, Langevin dynamics with a mean-field interaction provide a tool to prove trainability of neural networks \cite{MeMoNg18,RoVa18}.
Algorithms using Langevin dynamics have a better long-time behaviour compared to the overdamped Langevin dynamics \cite{ChChBaJo17,DaRi20}, which forms a degenerated special case of the Langevin dynamics, where the limit for $\gamma$ to infinity is taken \cite[Section 6.5.1]{Pa14}.
Therefore, nonlinear Langevin dynamics became recently popular for training networks as the Generative Adversarial Network (GAN) \cite{KaReTaYa20}.

If $\tilde{b}\equiv 0$ and $b=\nabla \rmV$, then under some mild conditions on $\rmV$ the unique invariant measure is given by the Boltzmann-Gibbs distribution 
\begin{align*}
\mu_{\infty}(\rmd x \ \rmd y)\propto \exp(-\rmV(x)-|y|^2/(2u)),
\end{align*}
see e.g. \cite[Proposition 6.1]{Pa14}.
Otherwise, i.e., if $b$ is not of gradient-type or $\tilde{b}\neq 0$, it is often not clear if uniqueness of an invariant probability measure holds (see \cite{DuTu16}) and how fast the marginal law of a solution of \eqref{eq:KFP} converges towards it.

Getting a clear picture of the long-time behaviour of processes given by stochastic differential equations with and without nonlinear forces of McKean-Vlasov type is of wide interest and the objective of many works.
For the overdamped Langevin dynamics forming a first-order equation, the long-time behaviour is studied using both analytic approaches as functional inequalities (e.g. \cite{BaGeLe14,BoGeGu12}) and probabilistic approaches as coupling techniques. 
Via a reflection coupling, Eberle \cite{Eb16} established contraction in $L^1$ Wasserstein distance with respect to a carefully aligned distance function with explicit rates for locally non-convex potentials.
For the dynamics with an additional nonlinear drift term, which appears to model for example granular media (see \cite{BeCaCaPu98}), exponential convergence rates have been investigated for uniformly convex potentials in \cite{CaMcVi03} using gradient flow structure, Logarithmic Sobolev inequalities and transportation cost inequalities 
(see \cite{CaMcVi06,Ma01,CaGuMa08} for relaxations to certain non-uniformly convex potentials).
 Further, \cite{Ma01,CaGuMa08} provide uniform in time propagation of chaos estimates for the corresponding particle system. Based on a coupling approach consisting of a mixture of a synchronous and a reflection coupling, uniform in time propagation of chaos is shown in \cite{DuEbGuZi20} for possibly non strongly convex confinement potentials and possibly non-convex interaction potentials. For the unconfined dynamics (i.e., $b=0$) exponential convergence is studied in \cite{CaGuMa08,BoGeGu13} for convex interaction potentials applying analytic tools. If the convexity assumption on the interaction potential is removed, exponential convergence and propagation of chaos can still be established for unconfined overdamped Langevin dynamics via a sticky coupling approach (see \cite{DuEbGuSc21})  for a class of interaction forces that split in a linear term and a perturbation part.

Proving contraction rates for second-order SDEs given by \eqref{eq:KFP} is more delicate as additionally one has to deal with the hypoellipticity of the diffusion. In the case of the classical Langevin dynamics with a gradient-type force, i.e., when $b=\nabla \rmV$ and $\tilde{b}\equiv0$ hold, exponential convergence is studied in e.g.  \cite{AcArSt15,DoMoSc09,DoMoSc15,HeNi05,HeNi04,He07,Vi09} using analytic methods including the Witten Laplacian, semigroups, functional inequalities and hypocoercivity. 
To our knowledge, the best-known contraction rate is obtained for $\kappa$-strongly convex potentials $\rmV$ in \cite{CaLuWa19}, where contraction in $L^2$ distance is shown with a rate of order $\mathcal{O}(\sqrt{\kappa})$ via a Poincaré type inequality.
Harris type theorems, involving a Lyapunov drift condition, provide a probabilistic technique to analyse the long-time behaviour of Langevin dynamics, see \cite{BaCaGu08,Wu01, MaStHi02,Ta02}. 
An alternative powerful probabilistic approach, which provides quantitative rates, is based on couplings. Via a synchronous coupling approach, Dalalyan and Riou-Durand \cite{DaRi20} showed contraction in Wasserstein distance with rate of order $\mathcal{O}(\kappa/\sqrt{L})$ for $\kappa$-strongly convex potentials with $L$-Lipschitz continuous gradients if $L\gamma^{-2}u\le 1$ holds.
In \cite{EbGuZi19}, Eberle, Guillin and Zimmer introduced a coupling for the Langevin dynamics including non-convex confinement potentials and showed exponential convergence with explicit rates. 
There, contraction is shown in a specific $L^1$ Wasserstein distance with respect to a semimetric involving a Lyapunov function.
More precisely, for large distances, a synchronous coupling is considered and the Lyapunov function in the semimetric yields contraction. For small distances, the noise is synchronized on a line, where contraction for the position is observed, and reflected otherwise to force the dynamics to return to that line. Combining the results of the different areas, contraction in average is obtained for a carefully aligned semimetric. Due to the Lyapunov function, the contraction rate depends on the dimension and the semimetric is not applicable for nonlinear Langevin dynamics, which suggests getting rid of the Lyapunov function and treating the area of large distances differently.

To get results on the long-time behaviour for nonlinear  Langevin diffusions given by \eqref{eq:KFP}, we have to handle both the difficulties coming from the nonlinearity and the hypoellipticity of the equation.
Beginning with the analytic approaches, let us mention the work by Villani \cite{Vi09}, where the hypocoercivity is extended to the framework on the torus with small interactions, see also the work by Bouchut and Dolbeault \cite{BoDo95}.
Using a free energy approach, convergence to equilibrium is studied in \cite{DuTu18} for specific non-convex confining potentials and convex polynomial interaction potentials. 
Applying functional inequalities for mean-field models, established in \cite{GuliWuZh21} to prove convergence to equilibrium in weighted Sobolev norm, Monmarch\'e and Guillin proved propagation of chaos for \eqref{eq:KFP_meanfield_nongradient} in \cite{Mo17,GuMo21}. There, they considered both strongly convex confinement potentials and more general confinement potentials and attractive interaction potentials with at most quadratic growth. 

Coupling techniques are also employed in the study of the nonlinear dynamics \eqref{eq:KFP}. 
In \cite{BoGuMa10}, convergence to equilibrium is shown via a synchronous coupling for small Lipschitz interactions and a quadratic-like friction term. 
The combination of the coupling approach of \cite{EbGuZi19} and a Lyapunov function is used in \cite{KaReTaYa20} to prove exponential contraction in the case of certain small mean-field potentials of non-convolution-type. There, the results are applied to the numerical discretized version of the dynamics corresponding to the Hamiltonian Stochastic Gradient Descent, and the connection to the analysis of deep neural networks is drawn, see \cite{HuReSiSz19} for further references on the connection to deep learning.
Very closely related to this work is the recent preprint \cite{GuLeMo21} by Guillin, Le Bris and Monmarch\'e, which has been prepared independently in parallel.
They considered non-globally convex confinement potentials and Lipschitz continuous even interaction potentials and extended the approach by \cite{EbGuZi19}. More precisely, they modified the semimetric by a sophisticated Lyapunov function to treat the nonlinear Langevin dynamics and to obtain propagation of chaos bounds. 
The main differences between this work and \cite{GuLeMo21} are 
that here we include forces that are not necessarily of gradient type and that we establish global contractivity with dimension-free rates by constructing a novel distance function and modifying the coupling approach of \cite{EbGuZi19} appropriately. 
In particular, we consider two separate metrics $r_l$ and $r_s$ for large and small distances instead of a semimetric involving a Lyapunov function and establish contraction for both metrics separately. For small distances we make use of the results by \cite{EbGuZi19}, whereas for large distances we consider a twisted $2$-norm structure for the metric $r_l$ of the form $(x\cdot(Ax)+ x\cdot(B y)+y\cdot(Cy)$ with positive definite matrices $A,B,C\in\mathbb{R}^{d\times d}$. This structure is similar to the structure appearing in the Lyapunov function in \cite{MaStHi02,Ta02} and to the norm used in e.g. \cite{AcArSt15} to prove contraction for certain strongly convex potentials.

Then, our first main contribution is a global contraction result in Wasserstein distance with respect to a distance $\rho$ that is carefully glued of $r_s$ and $r_l$ and that is equivalent to the Euclidean distance. 
More precisely, we impose $b$ to be a sum of a linear function $-K x$, where $K\in\mathbb{R}^{d\times d}$ is a positive definite matrix with smallest eigenvalue $\kappa$, and a certain Lipschitz continuous function $g(x)$ with Lipschitz constant $L_g$  which is such that $b$ includes gradients of asymptotically strongly convex potentials. 
If the friction parameter $\gamma$ is sufficiently large, i.e., $\gamma^2>2L_g^2u/\kappa$, and if the Lipschitz constant $\tilde{L}$ of the interaction force $\tilde{b}$ is sufficiently small, we prove for two probability measures $\bar{\mu}_0$ and $\bar{\nu}_0$ on $\mathbb{R}^{2d}$ with finite second moment, 
\begin{align} \label{eq:contraction_intro}
\mathcal{W}_\rho(\bar{\mu}_t,\bar{\nu}_t)\leq e^{-ct}\mathcal{W}_\rho(\bar{\mu}_0,\bar{\nu}_0),\qquad \text{and } \qquad \mathcal{W}_1(\bar{\mu}_t,\bar{\nu}_t)\leq M_1e^{-ct}\mathcal{W}_1(\bar{\mu}_0,\bar{\nu}_0),
\end{align}
where $\bar{\mu}_t$ and $\bar{\nu}_t$ are the laws of the solutions $(\bar{X}_t,\bar{Y}_t)$ and $(\bar{X}_t',\bar{Y}_t')$ to \eqref{eq:KFP} with initial distribution $\bar{\mu}_0$ and $\bar{\nu}_0$, respectively.
The dimension-free constants $c$ and $M_1$ depend on $\kappa$, $\gamma$, $u$, on the largest eigenvalue of $K$ and on properties of $g$. Note that the additional constant $M_1$ in the second bound measures the difference between the distance $\rho$ and the Euclidean distance. 

These bounds are established using a modification of the coupling introduced in \cite{EbGuZi19},
which is a synchronous coupling for large distances and mainly a reflection coupling for small distances except on one line the noise is synchronized. In this work, we adjust the transition from synchronous coupling for large distances to reflection coupling for small distances to suit the underlying distance function. Namely, the synchronous coupling is applied when $r_l$ is considered and the coupling approach of \cite{EbGuZi19} when $r_s$ is considered.

This approach which does not rely on a Lyapunov function has the advantage that the upper bound in \eqref{eq:contraction_intro} depends only on the Wasserstein distance between the two initial distributions and is independent of the two distributions themselves (cf. \cite{EbGuZi19,KaReTaYa20,GuLeMo21}).
Further, the metric $r_l$ is chosen such that the rate of the contraction result for large distances is optimized up to a constant. 
We emphasize that these bounds give also global contractivity for the classical Langevin dynamics and improve the result obtained in \cite{EbGuZi19}.
 
Moreover, using the ansatz for large distances, we contribute to the analysis of the optimal contraction rate for strongly convex potentials and improve the results of \cite{DaRi20}. If the drift corresponds to a $\kappa$-strongly convex potential, we can split $\rmV$ in a linear part $x\cdot (Kx)$, where $K$ is a positive definite matrix with smallest eigenvalue $\kappa$, and a convex function $G$ with $L_G$ Lipschitz continuous gradients. We prove contraction in Wasserstein distance with respect to a distance function of the same form as $r_l$ with rate $c=\gamma/2\min(1/4,\kappa u \gamma^{-2})$ provided $L_G u \gamma^{-2} \le 3/4$ holds. If the perturbation $G$ is sufficiently small, i.e., $L_G\le 3 \kappa$, we obtain for optimized $\gamma$ a rate of order $\mathcal{O}(\sqrt{\kappa})$, that coincides with the order given in the $L^2$ contraction result in \cite{CaLuWa19}, and otherwise we obtain a rate of the same order as in \cite{DaRi20}. 

Finally, applying a componentwise version of the preceding coupling we establish a uniform in time propagation of chaos bound for the corresponding particle system \eqref{eq:KFP_meanfield_nongradient}, i.e., we show for a probability measure $\mu_0$ on $\mathbb{R}^{2d}$ with finite second moment,
\begin{align*}
\mathcal{W}_{1,\ell^1_N}(\bar{\mu}_t^{\otimes N}, \mu_t^N)\leq C_1 c^{-1} N^{-1/2},
\end{align*}
where $\mu_t^N$ is the law of the particles driven by \eqref{eq:KFP_meanfield_nongradient} with initial distribution $\mu_0^N=\mu_0^{\otimes N}$ and $\bar{\mu}_t^{\otimes N}$ is the product law of $N$ independent solutions to \eqref{eq:KFP} with initial distribution $\mu_0$.
Here, $C_1$ is a constant depending on $\kappa$, $\gamma$, $u$, $d$, on properties of $g$, and on the second moment of $\mu_0$. The normalized $\ell^1$-distance $\ell_N^1$ is given by 
\begin{align} \label{eq:l_N^1}
\ell_N^1((x,y),(\bar{x},\bar{y}))=N^{-1}\sum_{i=1}^N (|x^i-\bar{x}^i|+|y^i-\bar{y}^i|), \qquad \text{for all } x,y,\bar{x},\bar{y}\in\mathbb{R}^{Nd},
\end{align}
where $|\cdot|$ denotes the Euclidean metric.

Eventually, we note that the construction of the metric for large distance can be applied to prove contraction to specific unconfined cases, where $b\equiv 0$ and $\tilde{b}$ is a small perturbation of a linear force. 

\paragraph{Notation:}
For some space $\mathbb{X}$, which is here either $\mathbb{R}^{2d}$ or $\mathbb{R}^{2Nd}$, we denote its Borel $\sigma$-algebra by $\mathcal{B}(\mathbb{X})$. The space of all probability measures on $(\mathbb{X},\mathcal{B}(\mathbb{X}))$ is denoted by $\mathcal{P}(\mathbb{X})$. 
Let $\mu,\nu\in\mathcal{P}(\mathbb{X})$. A coupling $\omega$ of $\mu$ and $\nu$ is a probability measure on $(\mathbb{X}\times \mathbb{X},\mathcal{B}(\mathbb{X})\otimes\mathcal{B}(\mathbb{X}))$ with marginals $\mu$ and $\nu$. The  $L^p$ Wasserstein distance with respect to a distance function $d:\mathbb{X}\times\mathbb{X}\to\mathbb{R}$ is defined by
\begin{align*}
\mathcal{W}_{p,d}(\mu,\nu)=\inf_{\omega \in\Pi(\mu,\nu)}\Big(\int_{\mathbb{X}\times\mathbb{X}}d(x,y)^p\omega(\rmd x \rmd y)\Big)^{1/p},
\end{align*}
where $\Pi(\mu,\nu)$ denotes the set of all couplings of $\mu$ and $\nu$.
We write $\mathcal{W}_p$ if the underlying distance function is the Euclidean distance. 

\paragraph{Outline of the paper:}
In \Cref{sec:mainresults}, we state the contraction results for the classical Langevin dynamics and give an informal construction of the coupling and the metric.
In \Cref{sec:contr_nonlinear}, we state the framework and the contraction results for Langevin dynamics of McKean-Vlasov type before defining rigorously the metric and the coupling approach in \Cref{sec:couplingandmetric}. 
Uniform in time propagation of chaos is established in \Cref{sec:unif_prop}.
The proofs are postponed to \Cref{sec:proof}.

\section{Contraction for classical Langevin dynamics} \label{sec:mainresults}

\subsection{Contraction for Langevin dynamics with strongly convex confinement potential}\label{sec:contr_strongconv}

First, we consider the Langevin dynamics without a non-linear drift and with confinement potential $\rmV$ given by the stochastic differential equation
\begin{equation} \label{eq:underdampLang_pot}
\begin{cases}
&\rmd X_t = Y_t \rmd t,
\\ &\rmd Y_t = (-\gamma Y_t-u \nabla \rmV(X_t))\rmd t +\sqrt{2\gamma u}\rmd B_t,
\end{cases} 
\end{equation}
with initial condition $(X_0,Y_0)=(x,y)\in\mathbb{R}^{2d}$ and with $d$-dimensional standard Brownian motion $(B_t)_{t\ge 0}$.
We impose for $\rmV\in\mathcal{C}^2(\mathbb{R}^d)$:

\begin{assumption}\label{ass:conf_pot}  
There exist a positive definite matrix $K\in\mathbb{R}^{d\times d}$ with smallest eigenvalue $\kappa>0$ and a convex function $G:\mathbb{R}^d\to\mathbb{R}$ with $L_G$-Lipschitz continuous gradients, i.e., 
\begin{align} 
&\langle \nabla G(x)- \nabla G(\bar{x}),x-\bar{x}\rangle \ge 0 && \text{ and }  \label{eq:strongconv_pot}
\\ & |\nabla G(x )-\nabla G(\bar{x})| \le L_G|x-\bar{x}| && \text{ for all } x, \bar{x}\in\mathbb{R}^d, \nonumber
\end{align} such that
\begin{align*}
\rmV(x) = x \cdot (Kx)/2+G(x) \qquad \text{ for any } x \in\mathbb{R}^d.
\end{align*}
\end{assumption}

We note that \Cref{ass:conf_pot} is satisfied for all $\kappa$-strongly convex functions $\rmV$ with  $L_V$-Lipschitz continuous gradients, i.e., 
\begin{align*}
&\langle \nabla \rmV(x)-\nabla \rmV(y), x-y \rangle \ge \kappa|x-y|^2 && \text{and}
\\ & |\nabla \rmV(x )-\nabla \rmV(y)| \le L_V|x-y| && \text{for all } x, y\in\mathbb{R}^d.
\end{align*}
Note that the splitting of $\rm V$ in $K$ and $G$ is in general not unique. A natural choice is given by $K=\kappa \mathrm{Id}$ and $G(x)=\rmV(x)-(\kappa/2)|x|^2$, where $\mathrm{Id}$ is the $d\times d$ identity matrix. As we see later, we often want a splitting of $\rmV$ such that the Lipschitz constant $L_G$ is minimized.

We establish a global contraction result for \eqref{eq:underdampLang_pot} in $L^p$ Wasserstein distance with respect to the distance function $r:\mathbb{R}^{2d}\times\mathbb{R}^{2d}\to[0,\infty)$ given by
\begin{align}\label{eq:dist_r}
r((x,y),(\bar{x},\bar{y}))=\gamma^{-2}u(x-\bar{x})\cdot(K(x-\bar{x}))+\frac{1}{2}|(1-2\lambda)(x-\bar{x})+\gamma^{-1}(y-\bar{y})|^2+\frac{1}{2}\gamma^{-2}|y-\bar{y}|^2
\end{align}
for $(x,y),(\bar{x},\bar{y})\in\mathbb{R}^{2d}$ with
\begin{align} \label{eq:lambda}
\lambda=\min(1/8,\kappa u \gamma^{-2}).
\end{align}

\begin{theorem}[Contractivity for strongly convex potentials]\label{thm:contr_strongconv}
For $t\ge 0$, let $\mu_t$ and $\nu_t$ be the law at time $t$ of the processes $(X_t, Y_t)$ and $(X_t',Y_t')$, respectively, where $(X_s,Y_s)_{s\ge 0}$ and $(X_s',Y_s')_{s\ge 0}$ are solutions to \eqref{eq:underdampLang_pot} with initial distributions $\mu_0$ and $\nu_0$ on $\mathbb{R}^{2d}$, respectively. Suppose \Cref{ass:conf_pot} holds and 
\begin{align} \label{eq:cond_LG}
L_G u \gamma^{-2} \le 3/4.
\end{align}
Then, for any $1\le p <\infty$
\begin{align*}
\mathcal{W}_{p,r}(\mu_t, \nu_t)\le e^{-ct} \mathcal{W}_{p,r}(\mu_0,\nu_0) \qquad \text{ and } \qquad \mathcal{W}_{p}(\mu_t, \nu_t)\le M e^{-ct} \mathcal{W}_{p}(\mu_0,\nu_0),
\end{align*}
where the contraction rate $c$ is given by
\begin{align} \label{eq:rate_strongconv}
c=\gamma \lambda=\min(\gamma/8, \kappa u \gamma^{-1}/2).
\end{align}
The constant $M$ is given by 
\begin{align} \label{eq:M_strongconv}
M=\sqrt{\max(uL_K+\gamma^2, 3/2)\max(1/(u\kappa),2)},
\end{align}
where $L_K$ denotes the largest eigenvalue of $K$.
\end{theorem}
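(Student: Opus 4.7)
I would carry out a synchronous coupling: draw $((X_0,Y_0),(X_0',Y_0'))$ from a $\mathcal{W}_{p,r}$-optimal coupling of $(\mu_0,\nu_0)$ and drive both solutions of \eqref{eq:underdampLang_pot} by the \emph{same} Brownian motion. Since the diffusion coefficient is constant, the noise cancels in the difference $(Z_t,W_t):=(X_t-X_t',Y_t-Y_t')$, which therefore satisfies the deterministic ODE
\begin{align*}
\dot Z_t = W_t,\qquad \dot W_t = -\gamma W_t - uK Z_t - u\Delta G_t,\qquad \Delta G_t := \nabla G(X_t)-\nabla G(X_t').
\end{align*}
\Cref{ass:conf_pot} supplies both the monotonicity $Z_t\cdot\Delta G_t\ge 0$ and the co-coercivity bound $|\Delta G_t|^2\le L_G\,Z_t\cdot\Delta G_t$ (Baillon--Haddad for a convex function with $L_G$-Lipschitz gradient), which will be exploited below.

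Next I differentiate $r_t := r((X_t,Y_t),(X_t',Y_t'))$ along the flow. The coefficients in \eqref{eq:dist_r} are tuned so that the $Z_t^T K W_t$ contributions cancel exactly under the chain rule, and one is left with
\begin{align*}
\dot r_t = -2\lambda(1-2\lambda)Z_t\cdot W_t - (1+2\lambda)\gamma^{-1}|W_t|^2 - (1-2\lambda)\gamma^{-1}u\,Z_t^T K Z_t - (1-2\lambda)\gamma^{-1}u\,Z_t\cdot\Delta G_t - 2\gamma^{-2}u\,W_t\cdot\Delta G_t.
\end{align*}
The only really dangerous term is the last one. I would absorb it via Young's inequality $-2\gamma^{-2}u W_t\cdot\Delta G_t \le \gamma^{-2}u\eta^{-1}|W_t|^2 + \gamma^{-2}u\eta|\Delta G_t|^2$ with $\eta=(1-2\lambda)\gamma/L_G$, and then replace $|\Delta G_t|^2$ by $L_G Z_t\cdot\Delta G_t$ via co-coercivity; the new $Z_t\cdot\Delta G_t$ contribution cancels the favorable fourth term exactly, leaving a single net extra contribution of $L_Gu\gamma^{-2}/(1-2\lambda)\cdot\gamma^{-1}|W_t|^2$.

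Verifying $\dot r_t + c r_t\le 0$ with $c=\gamma\lambda$ thus reduces to a pointwise quadratic-form inequality in $(Z_t,W_t)$. The $|W_t|^2$-coefficient requires $L_Gu\gamma^{-2}\le (1+\lambda)(1-2\lambda)$, whose minimum on $\lambda\in[0,1/8]$ equals $27/32$, so the cleaner hypothesis \eqref{eq:cond_LG} suffices. A further Young inequality with weight proportional to $\lambda(1-2\lambda)$ absorbs the indefinite $Z_t\cdot W_t$ term into the remaining $|W_t|^2$ and $Z_t^T K Z_t\ge\kappa|Z_t|^2$ budgets, and a direct case analysis for $\lambda=1/8$ (when $\kappa u\gamma^{-2}\ge 1/8$) and for $\lambda=\kappa u\gamma^{-2}$ shows that the resulting $|Z_t|^2$-inequality holds, giving the rate $c=\gamma\lambda$ of \eqref{eq:rate_strongconv}. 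Gronwall then yields $r_t\le e^{-ct}r_0$ pathwise, so raising to the $p$-th power, taking expectations, and using optimality of the initial coupling produces the first bound of the theorem.

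The Euclidean bound follows from the linear-algebra equivalence of $r$ with $|(Z,W)|^2$: diagonalising the constant symmetric matrix defining $r$ gives lower and upper eigenvalue bounds in terms of $u\kappa,\gamma$ and $uL_K,\gamma$ respectively, producing exactly the prefactor $M$ of \eqref{eq:M_strongconv}. The principal obstacle is the coefficient bookkeeping in the third paragraph; in particular the realisation that invoking co-coercivity of $\nabla G$, rather than only its plain Lipschitz bound, is what keeps the condition on $L_G$ at the sharp level $3/4$. Without this device one obtains a much stronger requirement on $L_G$ and loses the optimal dependence of $c$ on $\kappa$ in the confinement-dominated regime.
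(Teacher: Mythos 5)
Your proposal is correct and follows essentially the same route as the paper's proof: a synchronous coupling, differentiation of the twisted quadratic form $r$, co-coercivity of $\nabla G$ combined with Young's inequality to neutralize the $W_t\cdot(\nabla G(X_t)-\nabla G(X_t'))$ term under \eqref{eq:cond_LG}, Gr\"onwall, and the eigenvalue equivalence of $r$ with the Euclidean metric to get $M$. The only differences are bookkeeping choices (your tuned Young weight $\eta=(1-2\lambda)\gamma/L_G$ versus the paper's fixed weights, and the extra Young step for the $Z_t\cdot W_t$ term arising because you match against $\gamma\lambda\,r$ rather than $2\gamma\lambda\,r^2$), which do not change the substance of the argument.
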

\begin{proof}
The proof is based on a synchronous coupling and is postponed to \Cref{sec:proof_strongconv}.
\end{proof}

\begin{remark} 
If $\rmV$ is a quadratic function, then $L_G=0$ and the restriction on $\gamma$ vanishes. In this case, the $L^2$ spectral gap of the corresponding generator is given by
\begin{align*}
c_{\mathrm{gap}}=(1-\sqrt{(1-4\kappa u\gamma^{-2})^+})(\gamma/2),
\end{align*}
cf., \cite[Section 6.3]{Pa14}. More precisely, $c_{\mathrm{gap}}=\gamma/2$ if $4\kappa u\gamma^{-2}\geq 1$, and $\kappa u\gamma^{-1}\leq c_{\mathrm{gap}}\leq 2 \kappa u\gamma^{-1}$ if $4\kappa u\gamma^{-2}< 1$. Hence, the contraction rate is of the same order as the spectral gap. In particular for $\gamma=2\sqrt{\kappa u}$ the optimal contraction rate $c=\sqrt{\kappa u}/8$ is obtained.
If $L_G\le 3 \kappa$, $\gamma=2\sqrt{\kappa u }$ satisfies condition \eqref{eq:cond_LG} and yields the optimal contraction rate of order $\mathcal{O}(\sqrt{\kappa})$. Otherwise, for $\gamma=\sqrt{(4/3)L_Gu}$ the contraction rate is optimized and of order $\mathcal{O}(\kappa/\sqrt{L_G})$.
\end{remark}

\subsection{Framework for classical Langevin dynamics with general external forces}

Next, we consider the classical Langevin dynamics  $(X_t,Y_t)_{t\geq 0}$ with a general external drift given by the stochastic differential equation
\begin{equation} \label{eq:underdampLang}
\begin{cases}
&\rmd X_t = Y_t \rmd t,
\\ &\rmd Y_t = (-\gamma Y_t+u b(X_t))\rmd t +\sqrt{2\gamma u}\rmd B_t,
\end{cases} 
\end{equation}
with initial condition $(X_0,Y_0)=(x,y)\in\mathbb{R}^{2d}$.

We impose the following assumption on the force $b$:
\begin{assumption}\label{ass:b} The function $b:\mathbb{R}^d\to \mathbb{R}^d$ is Lipschitz continuous and there exist a positive definite matrix $K\in\mathbb{R}^{d\times d}$ with smallest eigenvalue $\kappa\in(0,\infty)$ and largest eigenvalue $L_K\in(0,\infty)$, a constant $R\in[0,\infty)$ and a function $g:\mathbb{R}^d\to \mathbb{R}^d$ with Lipschitz constant $L_g\in(0,\infty)$ such that 
\begin{align}
b(x)=-K x+ g(x) \qquad \text{ for all }x \in\mathbb{R}^d,
\end{align}
and 
\begin{align} \label{eq:strongconv}
\langle g(x)-g(\bar{x}),x-\bar{x}\rangle \leq 0 \qquad \text{ for all } x, \bar{x}\in\mathbb{R}^d \text{ such that } |x-\bar{x}|\ge R.
\end{align}
\end{assumption}

\begin{remark} Suppose that $b=-\nabla V$ where $V$ is a potential function 
with a $L_V$-Lipschitz continuous gradient and that is $k$-strongly convex outside a Euclidean ball of radius $\tilde{R}$, i.e.,
\begin{align*}
\langle \nabla \rmV(x)-\nabla \rmV (\bar{x}),x-\bar{x}\rangle \geq k|x-\bar{x}|^2 \qquad \text{ for all } x, \bar{x}\in\mathbb{R}^d \text{ such that } |x|,|\bar{x}|\ge \tilde{R}.
\end{align*}
Note that $\nabla \rmV$ can be split in $\nabla \rmV(x)=kx+h(x)$ where $h:\mathbb{R}^d\to\mathbb{R}^d$ is an $L_h$-Lipschitz continuous function with $L_h\leq L_V+k$ and $\langle h(x)-h(\bar{x}),x-\bar{x}\rangle \geq 0$ for all $ x, \bar{x}\in\mathbb{R}^d$ such that $|x|,|\bar{x}|\ge\tilde{R}$.
Then for $ l \le \frac{1}{2}\min(1, \frac{L_h}{k})$, $b=-\nabla \rmV$ satisfies \Cref{ass:b} with $L_g\leq L_V+(1-l)k$, $\kappa=(1-l)k\ge \max(\frac{1}{2}k, k-\frac{L_h}{2})$ and $R=2\tilde{R}\frac{L_h}{l k}$.

\end{remark}

\begin{example}[Double-well potential]\label{example1} For $\beta>0$, we consider the double-well potential $\rmV\in\mathcal{C}^1(\mathbb{R})$ defined by 
\begin{align} \label{eq:doublewell}
\rmV (x)=\begin{cases} \beta \Big(\frac{|x|^4}{4}-\frac{|x|^2}{2}\Big) & \text{for }|x|\leq 2,
\\ \beta \Big(\frac{3|x|^2}{2}-4\Big) & \text{for } |x|> 2.
 \end{cases}
\end{align}
This potential has a Lipschitz continuous gradient
and is strongly convex with convexity constant $k=3\beta$ outside a Euclidean ball with radius $\tilde{R}=2$. 
We consider the splitting $-\nabla \rmV(x)=-\kappa x+g(x)$ with $\kappa=(2/3)k=2\beta$ and
\begin{align*}
g(x)=\begin{cases} -\beta (x^3-3x) & \text{for }|x|\leq 2,
\\ -\beta x & \text{for } |x|> 2.
 \end{cases}
\end{align*}
Then, the function $g$ is Lipschitz continuous with Lipschitz constant $L_g=9\beta$ and  \eqref{eq:strongconv} is satisfied for sufficiently large $R$. 
\end{example}

\subsection{Construction of the metric and the coupling}

We provide an informal construction of the coupling and the complementary metric. Given two Brownian motions $(B_t)_{t\geq 0}$, $(B_t')_{t\geq 0}$ and $(x,y),(x',y')\in\mathbb{R}^{2d}$, let $((X_t, Y_t),(X_t',Y_t'))_{t\geq 0}$ be an arbitrary coupling of two solutions to \eqref{eq:underdampLang}. 
It holds for the difference process $(Z_t, W_t)_{t\geq 0}=(X_t-X_t',Y_t-Y_t')_{t\ge 0}$,
\begin{align*}
\begin{cases}
&\rmd Z_t=W_t 
\\ & \rmd W_t=(-\gamma W_t +u b(X_t)-u b(X_t'))\rmd t +\sqrt{2\gamma u} \rmd (B_t-B_t').
\end{cases}
\end{align*}
Adapting the idea of the coupling construction from \cite{EbGuZi19}, the process $(Z_t, Q_t)_{t\ge 0}=(Z_t, Z_t+\gamma^{-1} W_t)_{t\ge 0}$ satisfies the stochastic differential equation
\begin{align} \label{eq:coupl_diffproz}
\begin{cases}
& \rmd Z_t=-\gamma Z_t \rmd t +\gamma Q_t \rmd t
\\& \rmd Q_t=\gamma^{-1} u (b(X_t)-b(X_t'))\rmd t +\sqrt{2\gamma^{-1}u }\rmd (B_t-B_t').
\end{cases}
\end{align}
As in \cite{EbGuZi19}, we apply a synchronous coupling for $Q_t=0$, since in this case the first equation of \eqref{eq:coupl_diffproz} is contractive and the absence of the noise ensures that the dynamics is not driven away from this area by random fluctuations.
Apart from $Q_t=0$, we want to apply a reflection coupling, which guarantees that the dynamics returns to the line $Q_t=0$. 
Note that this construction leads to a coupling that is sticky on the hyperplane $\{((x,y),(x',y'))\in\mathbb{R}^{4d}:x-x'+\gamma^{-1}(y-y')=0\}$. However, since it is technically hard to construct this sticky coupling, we consider 
approximations of the coupling, which are rigorously stated in \Cref{sec:coupling_confined} and which suffice for our purpose.
Similarly as in \cite{EbGuZi19}, we show for $r_s(t)=\alpha|Z_t|+|Q_t|<R_1$ with appropriately chosen constants $\alpha$, $R_1$ that there exists a concave increasing function $f$ depending on $\alpha$ and $R_1$ such that $f(r_s(t))$ is contractive on average. 
Note that the application of a concave function has the effect that a decrease in $r_s$ has a larger impact than an increase in $r_s$.

On the other hand, if the difference process $(Z_t, W_t)_{t\geq 0}$ is sufficiently far away from the origin, we obtain under \Cref{ass:b} for the force $b$ contractivity for the process $r_l(t)=(\gamma^{-2}u Z_t\cdot(KZ_t)+(1/2)|(1-2\tau)Z_t+\gamma^{-1}W_t|^2+(1/2)|\gamma^{-1}W_t|^2)^{1/2}$, where 
$\tau>0$ is a constant depending on $\kappa$, $\gamma$, $u$ and $L_g$. More precisely, we obtain local contractivity with contraction rate $\gamma\tau$ for $r_l(t)^2>\mathcal{R}$ for some $\mathcal{R}>0$ depending on $R$, $\kappa$, $\gamma$, $u$ and $L_g$.  
The process $r_l(t)$ is designed such that the local contraction rate is optimized up to some constant, see \Cref{lem:contr_outside_nonl}.

We construct a metric which is globally contractive on average using the previously established coupling. The key idea lies in combining $r_s$ and $r_l$ in such a way, that the two local contraction results imply global contractivity in the new metric. Note that for simplicity, we write $r_l$ and $r_s$ both for the norm $r_l(z,w)$ (respectively $r_s(z,w)$) of $(z,w)\in\mathbb{R}^{2d}$ and for the distance $r_l((x,y),(x',y'))$ (respectively $r_s((x,y),(x',y'))$) of $(x,y),(x',y')\in\mathbb{R}^{2d}$.


As we see in \Cref{sec:proof_conf_contr}, the lower bound $\mathcal{R}$ in the contraction result for large distances is fixed due to the dependence on the drift assumptions, whereas the upper bound $R_1$ in the result for small distances is flexible with the drawback that the contraction rate gets smaller for larger $R_1$. 
To benefit from the local contraction results, we want for all $(z,w)\in\mathbb{R}^{2d}$ that $r_s(z,w)\leq R_1$ or $r_l(z,w)^2\geq \mathcal{R}$ holds, which we achieve by choosing $R_1$ sufficiently large. 
We construct a continuous transition between $r_s$ and $r_l$ by considering $r_s\wedge(D_{\mathcal{K}}+\epsilon r_l)$, where the constant $\epsilon$ satisfies $2\epsilon r_l\leq r_s$ and the constant $D_{\mathcal{K}}$ is given such that $r_s(z,w)\wedge(D_{\mathcal{K}}+\epsilon r_l(z,w))=r_s(z,w)$ for $(z,w)$ with $r_l(z,w)^2\leq \mathcal{R}$. Then, we set $R_1$ such that $r_s(z,w)\wedge(D_{\mathcal{K}}+\epsilon r_l(z,w))=D_{\mathcal{K}}+\epsilon r_l(z,w)$ for $(z,w)$ with $r_s(z,w)\leq R_1$ is guaranteed.

In particular, in this construction the level set $r_s(z,w)-\epsilon r_l(z,w)=D_{\mathcal{K}}$ is optimally encompassed by the level set $r_s(z,w)=R_1$ and $r_l(z,w)^2=\mathcal{R}$, as illustrated in \Cref{figure1}, and $r_s(z,w)\leq R_1$ or $r_l(z,w)^2\geq \mathcal{R}$ is ensured. We define the metric $\rho((x,y),(x',y'))=f(r_s((x,y),(x',y'))\wedge \{D_{\mathcal{K}}+\epsilon r_l((x,y),(x',y'))\})$. 
As illustrated in \Cref{figure2}, we obtain $f(r_s)$ for small distances and $f(D_{\mathcal{K}}+\epsilon r_l((x,y),(x',y')))$ for large distances.
A detailed rigorous construction and a proof showing that $\rho$ defines a metric are given in \Cref{sec:couplingandmetric}.

\begin{figure}

\includegraphics[scale=0.6]{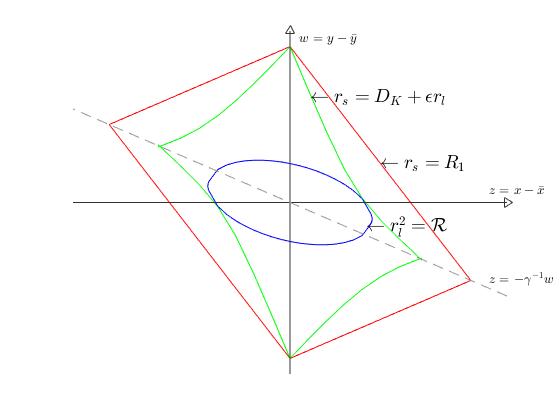}
\centering
\caption{Level sets of the metrics $r_l$ and $r_s$.}\label{figure1}

\end{figure}

\begin{figure}
\centering
\includegraphics[scale=0.5]{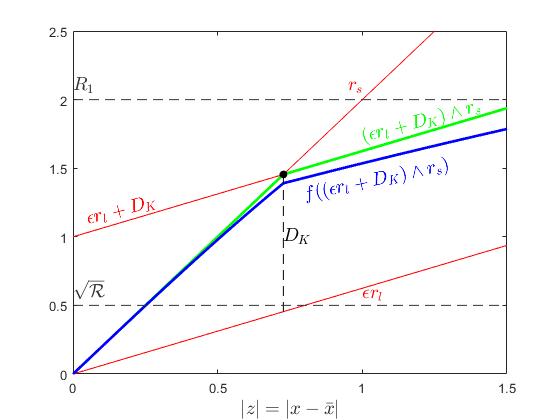}
\caption{Sketch of the metric construction 
 $f((\epsilon r_l+D_{\mathcal{K}})\wedge r_s)$. Here the metric is evaluated for $z=-\gamma^{-1}w$ (i.e., along the dashed line in \Cref{figure1}).
}\label{figure2}
\end{figure}

\subsection{A global contraction result for the classical Langevin dynamics with general external force} \label{sec:contr_underdamped}
We establish the main contraction result for the classical Langevin dynamics given by \eqref{eq:underdampLang}.

\begin{theorem} \label{thm:contraction_underdamped}
For $t\ge 0$, let $\mu_t$ and $\nu_t$ be the law at time $t$ of the processes $(X_t,Y_t)$ and $(X_t',Y_t')$, respectively, where $(X_s,Y_s)_{s\geq 0}$ and $(X_s',Y_s')_{s\geq 0}$ are solutions to \eqref{eq:underdampLang} with initial distributions $\mu_0$ and $\nu_0$ on $\mathbb{R}^{2d}$, respectively.
Suppose \Cref{ass:b} holds and 
\begin{align} \label{eq:cond_gamma}
L_g u\gamma^{-2}< \frac{\kappa}{2 L_g}.
\end{align} 
Then, 
\begin{align*}
\mathcal{W}_{1,\rho}(\mu_t, \nu_t)\leq e^{-ct} \mathcal{W}_{1,\rho}(\mu_0,\nu_0) \quad \text{and} \quad \mathcal{W}_{1}(\mu_t, \nu_t)\leq M_1 e^{-ct} \mathcal{W}_{1}(\mu_0,\nu_0),
\end{align*}
where the distance $\rho$ is defined precisely in \eqref{eq:rho} below and the contraction rate $c$ is given by 
\begin{align}
&c=\gamma \exp(-\Lambda)\min\Big(\frac{(L_K+L_g)u \gamma^{-2}}{4}\Lambda^{1/2},\frac{1}{8}\Lambda^{1/2},\frac{\tau E}{2}\Big) \quad \text{with} \label{eq:c_thm}
\\ & \Lambda=\frac{L_K+L_g}{4}R_1^2, && \text{and}, \label{eq:Lambda}
\\ & \tau:=\min(1/8,\gamma^{-2}u \kappa/2-\gamma^{-4}L_g^2u^2), && \text{and}, \label{eq:tau1}
\\ & E:=\frac{1}{6}\min\Big(1, \frac{\sqrt{\kappa}\gamma}{\sqrt{8u }(L_K+L_g)},\sqrt{\frac{\kappa u}{2}}\gamma^{-1}, 2(L_K+L_g)u\gamma^{-2} \Big). \label{eq:E}
\end{align}
The constants $R_1$ satisfies
\begin{align} \label{eq:relation_R1}
\frac{2}{3}\min(1, 2(L_K+L_g)u\gamma^{-2}) \sqrt{\frac{8 u \1_{\{R>0\}}+L_g u R^2}{\tau \gamma^2}} \le R_1 \le 4\max\Big(\frac{\sqrt{8}(L_K+L_g)u}{\gamma \sqrt{\kappa}}, 1\Big) \sqrt{\frac{8 u \1_{\{R>0\}}+L_g u R^2}{\tau \gamma^2}},
\end{align} and is explicitly stated in \eqref{eq:R_1}.
The constant $M_1$ is given by 
\begin{align} \label{eq:M1}
M_1 =\max(2(L_K+L_g)u \gamma^{-1}+\gamma,1)\frac{1}{2}\exp(\Lambda)\max\Big(3,\frac{3\gamma^2}{2(L_K+L_g)u}\Big)\max(\sqrt{2/(\kappa u)},2).
\end{align}
\end{theorem}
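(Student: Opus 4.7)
The plan is to combine, via a carefully matched coupling and metric, two complementary local contraction estimates: one for large separations governed by the quadratic form $r_l$ via a synchronous coupling, and one for small separations governed by $r_s$ via a reflection-type coupling as in \cite{EbGuZi19}. The coupling is constructed as described in Section 2.3: an approximating sequence of coupled solutions to \eqref{eq:underdampLang} that are exactly synchronous on $\{r_s \ge R_1\}$ and exactly a reflection in the velocity-difference variable $Q = Z + \gamma^{-1} W$ on a suitable subset of $\{r_s < R_1\}$, interpolated smoothly in a thin annulus. The limit is a coupling sticky on the hyperplane $\{Q = 0\}$, with the rigorous construction deferred to \Cref{sec:coupling_confined}; for the argument I only need the resulting semimartingale decomposition of $\rho((X_t,Y_t),(X_t',Y_t'))$.

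Once the coupling is in place, I would treat the two regimes separately. In the large regime I differentiate $r_l(t)^2$ along the synchronous coupling. Using the splitting $b(x) - b(x') = -K(x-x') + (g(x)-g(x'))$, the deterministic cross terms split into a strongly dissipative contribution from $K$ and a residual $g$-contribution controlled by $L_g$ and by \eqref{eq:strongconv}. Completing the square with the choice of $\tau$ in \eqref{eq:tau1} yields $\tfrac{d}{dt}\, r_l(t)^2 \le -2\gamma \tau\, r_l(t)^2$ whenever $r_l(t)^2 \ge \mathcal{R}$, for a threshold $\mathcal{R}$ proportional to $8 u \1_{\{R>0\}} + L_g u R^2$. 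In the small regime, the analysis of \cite{EbGuZi19} applied to a concave, increasing $f$ on $[0, R_1]$ gives an averaged contraction $\mathcal{A} f(r_s) \le -c\, f(r_s)$ on $\{r_s \le R_1\}$, where $\mathcal{A}$ is the generator of the coupled process; the rate $c$ balances the reflection noise in the $Q$-direction against a drift of size $\gamma^{-1} u (L_K + L_g) r_s$, and the exponential factor $e^{-\Lambda}$ in \eqref{eq:c_thm} arises from the concavity construction of $f$, with $\Lambda$ as in \eqref{eq:Lambda}.

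The gluing step is the main obstacle. The two regimes $\{r_s \le R_1\}$ and $\{r_l^2 \ge \mathcal{R}\}$ must cover $\mathbb{R}^{2d}$ so that at every point one of the two local estimates applies, and on the overlap the metric $\rho = f(r_s \wedge (D_{\mathcal{K}} + \epsilon r_l))$ must switch between them correctly. I would choose $D_{\mathcal{K}}$ so that $r_s \le D_{\mathcal{K}} + \epsilon r_l$ precisely when $r_l^2 \le \mathcal{R}$, and $R_1$ so that $D_{\mathcal{K}} + \epsilon r_l \le r_s$ precisely when $r_s \ge R_1$; this is the picture in \Cref{figure1} and forces the two-sided bound \eqref{eq:relation_R1}. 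Concavity of $f$ together with the inequality $2 \epsilon r_l \le r_s$ on the large-distance regime transfers the contraction of $r_l^2$ to a contraction of $f(D_{\mathcal{K}} + \epsilon r_l)$ at rate $\gamma \tau E$ with $E$ as in \eqref{eq:E}. Taking the minimum of the two regime rates yields the global rate $c$ in \eqref{eq:c_thm}, and a Gronwall argument followed by an infimum over couplings produces $\mathcal{W}_{1,\rho}(\mu_t,\nu_t) \le e^{-ct}\, \mathcal{W}_{1,\rho}(\mu_0,\nu_0)$.

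For the Euclidean bound, I would prove a two-sided comparison between $\rho$ and the Euclidean distance. On $\{r_s \le R_1\}$, concavity of $f$ gives $f(r_s) \asymp r_s$ up to the factor $e^{-\Lambda}$, and $r_s$ is equivalent to the Euclidean distance with constants depending on $\alpha$, $\gamma$; on $\{r_l^2 \ge \mathcal{R}\}$, the quadratic form $r_l^2$ is equivalent to the squared Euclidean norm with explicit constants built from $L_K$, $u$, $\gamma$, $\kappa$. Propagating these equivalences through the gluing construction produces the explicit constant $M_1$ in \eqref{eq:M1} and hence the advertised $\mathcal{W}_1$ bound; the factor $\max(2(L_K+L_g)u\gamma^{-1}+\gamma,1)$ reflects the cost of converting an $(x,y)$-displacement into an $r_l$-displacement, and the factor $e^{\Lambda}$ compensates the earlier $e^{-\Lambda}$ from the concavity of $f$.
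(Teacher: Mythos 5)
Your overall architecture is the paper's: two local estimates (a synchronous-coupling estimate for the quadratic form $r_l$ above a threshold $\mathcal R\propto(8u\1_{\{R>0\}}+L_guR^2)\gamma^{-2}/\tau$, and an Eberle--Guillin--Zimmer-type estimate for $f(r_s)$ below $R_1$), glued through $\rho=f\bigl((\Delta\wedge D_{\mathcal K})+\epsilon r_l\bigr)$, then Gr\"onwall and the equivalence of $\rho$ with the Euclidean distance for $M_1$. The genuine gap is in your coupling prescription: you switch from reflection to synchronous according to $r_s$ versus $R_1$ (``exactly synchronous on $\{r_s\ge R_1\}$ \ldots interpolated smoothly in a thin annulus''), whereas the switching must be keyed to the \emph{same} quantity whose truncation defines the metric, namely $\Delta=r_s-\epsilon r_l$ versus $D_{\mathcal K}$, as in \eqref{eq:rc_sc}. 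The reason is asymmetric. Wherever $\Delta\le D_{\mathcal K}$ the metric equals $f(r_s)$, and for $|Q|$ macroscopic the \emph{only} negative contribution in the drift of $f(r_s)$ is the It\^o term $4\gamma^{-1}u\,\rc^2 f''(r_s)$; the function $f$ is built exactly so that $f'(r)\alpha\gamma r+4\gamma^{-1}u f''(r)\le-2\hat c f(r)$ on $[0,R_1)$, which requires $\rc=1$ there. Since $R_1=\sup\{r_s:\Delta\le D_{\mathcal K}\}$, there are points with $\Delta\le D_{\mathcal K}$, $r_s$ arbitrarily close to $R_1$ and $|Q|$ not small; under your rule these lie in the interpolation annulus where $\rc<1$, the positive drift $f'(r_s)\alpha\gamma|Q|$ is no longer dominated, and the small-distance supermartingale estimate — hence the global rate $c$ — fails on that set. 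The converse leakage you implicitly allow (reflection noise at points with $\Delta>D_{\mathcal K}$ but $r_s<R_1$) happens to be harmless, but only because $\mathcal R$ is inflated by the term $8u\1_{\{R>0\}}/(\tau\gamma^2)$ precisely to absorb the extra It\^o correction $8\gamma^{-1}u\,\rc^2$ in $\rmd r_l^2$; this should be said, since otherwise the large-distance estimate would not tolerate any reflection at all.

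Two smaller points. First, you do not need (and should not claim) the sticky limiting coupling on $\{Q=0\}$: the paper keeps $\xi>0$, which produces the explicit error $(1+\alpha)\gamma\xi$ in the small-distance estimate, and only lets $\xi\to0$ after taking expectations in the Gr\"onwall step. Second, your ``precisely when'' characterisations of $D_{\mathcal K}$ and $R_1$ cannot hold (level sets of $\Delta$ are not level sets of $r_l$ or $r_s$); what the definitions \eqref{eq:D_K} and \eqref{eq:R_1} give, and all that is needed, are the one-sided implications $\Delta\ge D_{\mathcal K}\Rightarrow r_l^2\ge\mathcal R$ and $\Delta\le D_{\mathcal K}\Rightarrow r_s\le R_1$. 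Finally, for the transfer of the $r_l$-contraction to $f(D_{\mathcal K}+\epsilon r_l)$ you need not only $2\epsilon r_l\le r_s$ but also the lower bound $r_l\ge\mathcal E\, r_s$ from \eqref{eq:mathcalE}, which is where the factor $\mathcal E$ inside $E$ in \eqref{eq:E} comes from.
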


\begin{proof}
The proof is postponed to \Cref{sec:proof_conf_contr}.
\end{proof}

\begin{remark} 
Compared to the contraction result obtained in \cite[Theorem 2.3]{EbGuZi19}, global contractivity in Wasserstein distance is obtained with rate $c$ given in \eqref{eq:c_thm} which is independent of the dimension $d$. 
\end{remark}

\begin{remark}[Kinetic behaviour]
If $\gamma$ is chosen such that $\kappa u\gamma^{-2}$, $L_g u\gamma^{-2}$ and $L_K u\gamma^{-2}$ are fixed and further $L_KR^2$ and $L_g R^2$ are fixed,  we obtain similarly to \cite[Corollary 2.9]{EbGuZi19} that the contraction rate is of order $\Omega(R^{-1})$.
\end{remark}

\begin{remark} \label{rem:spectralgap} If $R=0$, the metric $\rho$ defined in \eqref{eq:rho} reduces to  $\rho((x,y),(\bar{x},\bar{y}))=(\gamma^{-2}(x-\bar{x})\cdot (K(x-\bar{x}))+(1/2)|(1-2\tau)(x-\bar{x})+\gamma^{-1}(y-\bar{y})|^2+(1/2)|\gamma^{-1}(y-\bar{y})|^2)^{1/2}$ 
and the coupling given in \Cref{sec:coupling_confined} becomes the synchronous coupling. This metric differs from $r$ defined in \eqref{eq:dist_r} by the constant $\tau$, since here the drift $b$ is not necessarily of gradient-type and we can not make use of the co-coercivity property as in the proof of \Cref{thm:contr_strongconv}.
 Following the proof given in \Cref{sec:proof_conf_contr}, we obtain contraction in $L^1$ Wasserstein distance, 
with contraction rate $c=\min(\gamma/16,\kappa\gamma^{-1}/4-8\gamma^{-3}L_g^2 u^2)$. 
We remark that the constant $E$ vanishes in the contraction rate, which measures the difference between the two metrics that are considered in general for $\rho$.
If $L_g\le \sqrt{2}\kappa$, the contraction rate is maximized for $\gamma=u^{1/2}(2\kappa+(4\kappa^2-8L_g^2)^{1/2})^{1/2}$ and satisfies $c=u^{1/2}(2\kappa+(4\kappa^2-8L_g^2)^{1/2})^{1/2}/16$, i.e., in this case the rate is of order $\mathcal{O}(\sqrt{\kappa})$. 
\end{remark}


\begin{example}[Double-well potential] For the model given in \Cref{example1}, we obtain contraction with respect to the designed Wasserstein distance if $\gamma>9\sqrt{\beta}$ is satisfied. 
\end{example}

\section{Contraction for nonlinear Langevin dynamics of McKean-Vlasov type} \label{sec:contr_nonlinear}


Consider the Langevin dynamics of McKean-Vlasov type given in \eqref{eq:KFP}.
We require \Cref{ass:b} for the function $b:\mathbb{R}^d\to\mathbb{R}^d$. For the function $\tilde{b}:\mathbb{R}^{2d}\to\mathbb{R}^d$ we impose:
\begin{assumption} \label{ass:tildeb}
The function $\tilde{b}:\mathbb{R}^{2d}\to\mathbb{R}^d$ is $\tilde{L}$-Lipschitz continuous.
\end{assumption}
\begin{example}[Quadratic interaction potential] \label{example2}
Consider $\tilde{b}(x ,y)= ky$ with ${k}\in\mathbb{R}$. Then $\tilde{L}= |k|$ and $\tilde{b}$ corresponds to the interaction potential $\rmW(x ,y )=-kx\cdot y$. This potential is attractive for $k>0$ and repulsive for $k<0$.  
\end{example}
\begin{example}[Mollified Coulomb, Newtonian and logarithmic potentials] \label{example3}
The gradients of the Coulomb potential and of the Newtonian potential, which describe charged and self-gravitating particles \cite{BoDo95}, are not  Lipschitz continuous. However, the gradient of a mollified version (see \cite{GuLeMo21}) given by 
\begin{align*}
\rmW(x,y)=\frac{\tilde{k}}{(|x-y|^p+q^p)^{1/p}} \qquad \text{ for } p\ge 2, q\in\mathbb{R}_+ \text{ and } \tilde{k}\in\mathbb{R}
\end{align*}
satisfies \Cref{ass:tildeb}, since  $\|\mathrm{Hess} \ W\|<\infty$, and therefore $\nabla_x \rmW$ is Lipschitz continuous.
%
In the same line, the gradient of 
the mollified version of the logarithmic potential given by 
\begin{align*}
\rmW(x,y)=-2\log((|x-y|^p+q^p)^{1/p}) \qquad \text{for } p\ge 2,q\in\mathbb{R}_+
\end{align*}
satisfies \Cref{ass:tildeb}.
\end{example}

Under the above conditions, we establish contraction in an $L^1$ Wasserstein distance.

\begin{theorem}[Contraction for nonlinear Langevin dynamics] \label{thm:contraction_nonlinear_conf}
Let $\bar{\mu}_0$ and $\bar{\nu}_0$ be two probability distributions on $\mathbb{R}^{2d}$ with finite second moment.
For $t\geq 0$, let $\bar{\mu}_t$ and $\bar{\nu}_t$ be the law at time $t$ of the processes $(\bar{X}_t,\bar{Y}_t)$ and $(\bar{X}_t',\bar{Y}_t')$, respectively, where $(\bar{X}_s,\bar{Y}_s)_{s\geq 0}$ and $(\bar{X}_s',\bar{Y}_s')_{s\geq 0}$ are solutions to \eqref{eq:KFP} with initial distribution $\bar{\mu}_0$ and $\bar{\nu}_0$, respectively. 
Suppose \Cref{ass:b}, \Cref{ass:tildeb} and \eqref{eq:cond_gamma} hold.  
Let $\tilde{L}$ satisfy
\begin{align} \label{eq:condition_tildeL}
\tilde{L}\le \exp(-\Lambda)\min\Big\{
\frac{\gamma \tau }{12}\sqrt{\frac{\kappa}{u}}\min(1,2(L_K+L_g)u\gamma^{-2}),\frac{L_K+L_g}{4}\Big\},
\end{align} 
where $\Lambda$ and $\tau$ are given in \eqref{eq:Lambda} and \eqref{eq:tau1}, respectively.
Then
\begin{align*}
\mathcal{W}_{1,\rho}(\bar{\mu}_t,\bar{\nu}_t)\leq e^{-\bar{c}t}\mathcal{W}_{1,\rho}(\bar{\mu}_0,\bar{\nu}_0)
\quad \text{and}\quad
\mathcal{W}_{1}(\bar{\mu}_t,\bar{\nu}_t)\leq M_1 e^{-\bar{c}t}\mathcal{W}_{1}(\bar{\mu}_0,\bar{\nu}_0),
\end{align*}
where the distance $\rho$ is given in \eqref{eq:rho} and $\bar{c}=c/2$ with $c$ given in \eqref{eq:c_thm}.
The constant $M_1$ is given in \eqref{eq:M1}.
%
Moreover, there exists a unique invariant probability measure $\bar{\mu}_\infty$ for \eqref{eq:KFP} and convergence in $L^1$ Wasserstein distance to $\bar{\mu}_\infty$ holds.

\end{theorem}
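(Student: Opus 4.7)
The plan is to run exactly the coupling underlying \Cref{thm:contraction_underdamped} on the two nonlinear processes simultaneously and to treat the mean-field drift difference as a Lipschitz perturbation that is small enough, by \eqref{eq:condition_tildeL}, to be absorbed into the rate with a factor~$1/2$ loss. Concretely, I would fix an optimal coupling of $\bar{\mu}_0$ and $\bar{\nu}_0$ for the $\mathcal{W}_{1,\rho}$ distance, use it as the joint initial condition, and construct $(\bar{X}_t,\bar{Y}_t)$ and $(\bar{X}'_t,\bar{Y}'_t)$ driven by the (approximate) synchronous/reflection coupling of \Cref{sec:couplingandmetric}. The difference process then satisfies the same SDE as in the classical case, except that the drift of $\bar{W}_t=\bar{Y}_t-\bar{Y}'_t$ carries the extra term
\begin{align*}
u\int_{\mathbb{R}^d}\tilde{b}(\bar{X}_t,z)\,\bar{\mu}_t^x(\rmd z)-u\int_{\mathbb{R}^d}\tilde{b}(\bar{X}'_t,z)\,\bar{\nu}_t^x(\rmd z),
\end{align*}
which I would split into a "same-law" contribution $u\int[\tilde{b}(\bar{X}_t,z)-\tilde{b}(\bar{X}'_t,z)]\bar{\mu}_t^x(\rmd z)$, Lipschitz-bounded in $|\bar{X}_t-\bar{X}'_t|$ with constant $u\tilde{L}$, and a "different-law" contribution $u\int\tilde{b}(\bar{X}'_t,z)(\bar{\mu}_t^x-\bar{\nu}_t^x)(\rmd z)$, bounded by $u\tilde{L}\,\mathcal{W}_1(\bar{\mu}_t^x,\bar{\nu}_t^x)\leq u\tilde{L}\,\mathcal{W}_1(\bar{\mu}_t,\bar{\nu}_t)$ by Kantorovich duality and \Cref{ass:tildeb}.

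Next, I would repeat the It\^o computation from the proof of \Cref{thm:contraction_underdamped} on $\rho((\bar{X}_t,\bar{Y}_t),(\bar{X}'_t,\bar{Y}'_t))$. The same-law contribution enters only through the directions along which the metric $\rho$ receives dissipation, and its Lipschitz constant $u\tilde{L}$ modifies the effective drift in the same way that $g$ does; the first half of \eqref{eq:condition_tildeL} ensures that its effect does not exceed half of the contraction rate $c$ produced in \Cref{thm:contraction_underdamped}, so that after absorption one retains at least $c/2$. The different-law contribution is an inhomogeneous forcing that, after taking expectations and using the equivalence of $\rho$ with the Euclidean metric proven in \Cref{sec:couplingandmetric}, yields an inequality of the form
\begin{align*}
\mathbb{E}\bigl[\rho((\bar{X}_t,\bar{Y}_t),(\bar{X}'_t,\bar{Y}'_t))\bigr]\leq e^{-c t}\mathbb{E}\bigl[\rho((\bar{X}_0,\bar{Y}_0),(\bar{X}'_0,\bar{Y}'_0))\bigr]+C\,u\tilde{L}\int_0^t e^{-c(t-s)}\mathcal{W}_{1,\rho}(\bar{\mu}_s,\bar{\nu}_s)\,\rmd s,
\end{align*}
with $C$ depending only on the metric equivalence constants. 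Taking the infimum over initial couplings and invoking \eqref{eq:condition_tildeL} a second time so that $Cu\tilde{L}\leq c/2$, Gr\"onwall's lemma yields $\mathcal{W}_{1,\rho}(\bar{\mu}_t,\bar{\nu}_t)\leq e^{-\bar{c}t}\mathcal{W}_{1,\rho}(\bar{\mu}_0,\bar{\nu}_0)$ with $\bar{c}=c/2$, and the Euclidean Wasserstein bound with constant $M_1$ then follows from the two-sided comparison between $\rho$ and $|\cdot|$.

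For the final assertion, the contraction makes the nonlinear semigroup $t\mapsto\bar{\mu}_t$ a strict contraction on the complete metric space $(\mathcal{P}_2(\mathbb{R}^{2d}),\mathcal{W}_{1,\rho})$ (completeness follows because $\rho$ is equivalent to the Euclidean metric and the second moments are propagated by \cite[Theorem 2.2]{Me96}), hence Banach's fixed-point theorem furnishes a unique invariant $\bar{\mu}_\infty$ and exponential convergence to it in $\mathcal{W}_{1,\rho}$, and therefore in $\mathcal{W}_1$ by the same equivalence. The main obstacle I expect is the bookkeeping in the second paragraph: the distance $\rho$ is glued from $r_s$ and $r_l$ through the subtle construction around $\Lambda$, $R_1$ and $D_{\mathcal{K}}$, and one must verify that the Lipschitz perturbation $u\tilde{L}$ of $b$ does not destroy the careful choice of $R_1$ and $\tau$ on either side of the gluing, which is precisely why the bound \eqref{eq:condition_tildeL} carries both the factor $\exp(-\Lambda)\tau/\sqrt{u}$ (protecting the rate on $r_l$) and the factor $(L_K+L_g)/4$ (protecting the rate on $r_s$). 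Once this absorption is done cleanly, the rest of the argument is a routine Gr\"onwall closure.
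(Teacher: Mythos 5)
Your overall architecture (the coupling of \Cref{sec:coupling_confined}, the split of the mean-field drift into a ``same-law'' part bounded by $u\tilde{L}|Z_t|$ and a ``different-law'' part bounded by $u\tilde{L}\,\mathcal{W}_1(\bar{\mu}_t^x,\bar{\nu}_t^x)\le u\tilde{L}\,\mathbb{E}[|Z_t|]$, followed by a Gr\"onwall closure) matches the paper's in spirit, but the absorption step for the different-law part has a genuine quantitative gap: closing the loop through the metric equivalence \eqref{eq:rho_equivalence} costs you \emph{two} factors of order $e^{-\Lambda}$, while the hypothesis \eqref{eq:condition_tildeL} only provides one. Concretely, your forcing term is of size $\gamma^{-1}u\tilde{L}\,\mathbb{E}[|Z_t|]$, and converting $\mathbb{E}[|Z_t|]$ (or $\mathcal{W}_1(\bar\mu_s,\bar\nu_s)$) into the quantity you contract, $\mathbb{E}[\rho(t)]$ (or $\mathcal{W}_{1,\rho}$), requires the lower bound $\rho\ge f'(R_1)\epsilon r_l\ge f'(R_1)\epsilon\sqrt{\kappa u}\,\gamma^{-1}|Z_t|$, i.e.\ a constant $C\sim 1/(f'(R_1)\epsilon\sqrt{\kappa u})\sim e^{\Lambda}$; since the unperturbed rate $c$ in \eqref{eq:c_thm} itself carries a factor $e^{-\Lambda}$, the requirement $Cu\tilde{L}\le c/2$ amounts to $\tilde{L}\lesssim e^{-2\Lambda}$, strictly stronger than \eqref{eq:condition_tildeL} whenever $R>0$ and $\Lambda$ is not small. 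You cannot fix this by keeping the state-dependent factor $f'(\cdot)$ in the forcing, because $\mathbb{E}[|Z_t|]$ is a global (expectation) quantity that multiplies the forcing even at states where the glued distance is small and $f'\approx 1$.

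The paper avoids this loss by never converting $\mathbb{E}[|Z_t|]$ into $\rho$ at all: in both regimes it spends only \emph{half} of the available dissipation on contracting $\rho$ and retains an explicit leftover negative term proportional to $f'(R_1)|Z_t|$ --- the term $-\tfrac{\alpha\gamma}{4}f'(R_1)|Z_t|$ in \Cref{lem:contr_inside_nonl} and the term $-f'(R_1)\tfrac{c_1\epsilon}{2}\sqrt{\kappa u\gamma^{-2}}\,|Z_t|$ extracted in \eqref{eq:proof1_4} by splitting \eqref{eq:proof1_1a}--\eqref{eq:proof1_1b} and using $r_l\ge\sqrt{\kappa u}\gamma^{-1}|Z_t|$. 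After taking expectations, the mean-field term $\gamma^{-1}u\tilde{L}\,\mathbb{E}[|Z_t|]$ is absorbed against the expectation of exactly this negative $|Z_t|$ term, which costs only the single factor $f'(R_1)\gtrsim e^{-\Lambda}$ appearing in \eqref{eq:condition_tildeL}; the factor-$2$ loss in the rate ($\bar c=c/2$) comes from halving the dissipation, not from a Gr\"onwall perturbation. So the missing idea is precisely this bookkeeping of a spare $|Z_t|$-dissipation term uniform over both regimes of the glued metric. Two smaller points: the same-law part cannot be ``treated like $g$'' since it has no one-sided sign condition --- it is just a Lipschitz perturbation absorbed by the $(L_K+L_g)/4$ half of \eqref{eq:condition_tildeL}; and for the invariant measure, Banach's fixed point on $(\mathcal{P}_2,\mathcal{W}_{1,\rho})$ needs care because $\mathcal{P}_2$ is not complete under $\mathcal{W}_1$-type metrics, so one should combine the contraction with the uniform second-moment bound of \Cref{lem:momentbound} (or argue along a Cauchy sequence $\bar\mu_{nt_0}$), a minor but real repair.
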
 

\begin{proof}
The proof is based on the coupling approach and the metric construction given in \Cref{sec:metric_conf} and \Cref{sec:coupling_confined}, respectively, and is postponed to \Cref{sec:proof_conf_contr}.
\end{proof}

\begin{remark} In comparison to \cite[Theorem 3.1]{GuLeMo21}, global contractivity is established with a contraction rate and a restriction on the Lipschitz constant $\tilde{L}$ that are independent of the dimension $d$. 
\end{remark}

\begin{remark}
Compared to the contraction result in \Cref{thm:contraction_underdamped} for classical Langevin dynamics, the contraction rate deteriorates by a factor of $2$ to compensate for the nonlinear interaction terms.

If $R=0$, \eqref{eq:condition_tildeL} reduces to $\tilde{L}\le \tau\gamma\sqrt{\kappa/u}/8$ and contraction holds with rate $\bar{c}=\min(\gamma/32,\kappa u\gamma^{-1}/8-L_g^2 u^2\gamma^{-3}/2)$ by \Cref{lem:contr_outside_nonl} and \eqref{eq:proof1_3}. If $L_g\le \sqrt{2}\kappa$, the contraction rate is maximized for $\gamma=\sqrt{u}(2\kappa+(4\kappa^2-8L_g^2)^{1/2})^{1/2}$ yielding $\bar{c}=\sqrt{u}(2\kappa+(4\kappa^2-8L_g^2)^{1/2})^{1/2}/32$. If the drift is additionally of gradient-type, we can adapt the proof of \Cref{thm:contr_strongconv} and use the co-coercivity property to obtain a contraction rate of order $\mathcal{O}(\sqrt{\kappa})$ for $L_g\le 3\kappa$ and a rate of order $\mathcal{O}(\kappa/\sqrt{L_g})$ for $L_g> 3\kappa$.
\end{remark}

\begin{remark} \label{rem:unconfined}
The contraction results can be extended to unconfined Langevin dynamics. Consider $b\equiv 0$ and $\tilde{b}:\mathbb{R}^{2d}\to\mathbb{R}^d$ given by $\tilde{b}(x,y)=-\tilde{K}(x-y)+\tilde{g}(x-y)$ where $\tilde{K}\in\mathbb{R}^{d\times d}$ is a positive definite matrix  with smallest eigenvalue $\tilde{\kappa}$ and where $\tilde{g}:\mathbb{R}^d\to\mathbb{R}^d$ is an anti-symmetric, $L_{\tilde{g}}$-Lipschitz continuous function $\tilde{g}:\mathbb{R}^d\to\mathbb{R}^d$. If $L_{\tilde{g}}\le (\gamma/2) \sqrt{\tilde{\kappa}/u}\min(1/8, \tilde{\kappa}u\gamma^{-2}/2)$, contraction in an $L^1$ Wasserstein distance can be shown via a synchronous coupling approach. The underlying distance function in the Wasserstein distance is based on a similar twisted $2$-norm structure as the distance $r_l$ given in \eqref{eq:r_l}.
We note that the conditions on $L_g$ and $\tilde{L}$ are combined in the restrictive condition on $L_{\tilde{g}}$, which implies $L_{\tilde{g}}\le \tilde{\kappa}/8$ and which gives only contraction for small perturbations of linear interaction forces. A detailed analysis of the unconfined dynamics is given in \Cref{app:unconf}.
\end{remark}

\section{Metric and coupling} \label{sec:couplingandmetric}

\subsection{Metric construction} \label{sec:metric_conf}
For both the classical Langevin dynamics and the nonlinear Langevin dynamics, i.e., when \Cref{ass:b} holds, 
we consider the metrics $r_l,r_s:\mathbb{R}^{2d}\times \mathbb{R}^{2d}\to[0,\infty)$ 
given by
\begin{equation}  \label{eq:r_l}
\begin{aligned}
r_l((x,y),(\bar{x},\bar{y}))^2&:=\frac{u}{\gamma^2}(x-\bar{x})\cdot (K(x-\bar{x}))+\frac{(1-2\tau)^2}{2}|x-\bar{x}|^2+\gamma^{-1}(1-2\tau)(x-\bar{x})(y-\bar{y})+\gamma^{-2}|y-\bar{y}|^2 
\\ & =\gamma^{-2} u(x-\bar{x})\cdot (K(x-\bar{x}))+\frac{1}{2}|(1-2\tau)(x-\bar{x})+\gamma^{-1}(y-\bar{y})|^2+\frac{1}{2}\gamma^{-2}|y-\bar{y}|^2,
\end{aligned}
\end{equation}
and 
\begin{equation}\label{eq:r_s}
\begin{aligned}
r_s((x,y),(\bar{x},\bar{y})):=\alpha|x-\bar{x}|+|x-\bar{x}+\gamma^{-1}(y-\bar{y})|, 
\end{aligned}
\end{equation}
for $(x,y),(\bar{x},\bar{y})\in\mathbb{R}^{2d}$, where the constants $\tau$ and $\alpha$ are
given by \eqref{eq:tau1} and
\begin{align} 
\alpha:=2(L_K+L_g)u\gamma^{-2},\label{eq:alpha1}
\end{align} 
respectively.
Next, we state the rigorous construction of the metric $\rho:\mathbb{R}^{2d}\times\mathbb{R}^{2d}\to[0,\infty)$, that is applied in \Cref{thm:contraction_underdamped} and \Cref{thm:contraction_nonlinear_conf}, and that is glued together of $r_l$ and $r_s$ in an appropriate way. Note that $r_l$ and $r_s$ are equivalent metrics. 
More precisely, for all $(x,y),(\bar{x},\bar{y})\in\mathbb{R}^{2d}$ it holds
$2\epsilon r_l((x,y),(\bar{x},\bar{y}))\leq r_s((x,y),(\bar{x},\bar{y}))$ with
\begin{align} \label{eq:epsilon}
\epsilon=(1/2)\min(1,(2/3)\alpha/(\sqrt{L_K u}\gamma^{-1}),\alpha).
\end{align}
Indeed, for $(z,w)=(x-\bar{x},y-\bar{y})$
\begin{align*}
r_l^2((x,y),(\bar{x},\bar{y}))&
\leq L_K \gamma^{-2} u|z|^2+\frac{1}{2}|z+\gamma^{-1}w|^2+2\tau|z||z+\gamma w|+2\tau^2|z|^2+\frac{1}{2}|\gamma^{-1}w|^2 \qquad \text{and}
\\  r_s^2((x,y),(\bar{x},\bar{y}))&\geq \frac{1}{2}(\alpha|z|+|z+\gamma^{-1}w|)^2+\frac{1}{2}\min(\alpha^2,1)\gamma^{-2}|w|^2 
\\ & \geq  \frac{\alpha^2}{2}|z|^2+\alpha|z||z+\gamma^{-1}w|+\frac{1}{2}|z+\gamma^{-1}w|^2+\frac{1}{2}\min(1,\alpha^2)\gamma^{-2}|w|^2,
\end{align*}
and 
\begin{align*}
4\epsilon^2\leq \min\Big(\frac{\alpha^2}{2(L_K u \gamma^{-2}+2\tau L_K u \gamma^{-2}/2)},1,\alpha^2\Big)\leq \min\Big(\frac{\alpha^2}{2(L_K u \gamma^{-2}+2\tau^2)},1,\frac{\alpha}{2\tau},\alpha^2\Big),
\end{align*}
since $\alpha>\kappa\gamma^{-2}$ and $\tau\leq \min(1/8,L_K\gamma^{-2} u/2)$ by \eqref{eq:alpha1} and \eqref{eq:tau1}.
Further, for all $(x,y),(\bar{x},\bar{y})\in\mathbb{R}^{2d}$ it holds
$\mathcal{E} r_s((x,y),(\bar{x},\bar{y})) \le r_l((x,y),(\bar{x},\bar{y}))  $ with
\begin{align} \label{eq:mathcalE}
\mathcal{E}=\min(\sqrt{\kappa u}\gamma^{-1}/(\sqrt{8}\alpha),1/2),
\end{align}
since
\begin{align*}
\frac{r_l(t)}{r_s(t)} 
 \ge 
\Big(\frac{\kappa u\gamma^{-2}|\bar{Z}_t|^2+(1/2)|(1-2\tau)\bar{Z}_t+\gamma^{-1}\bar{W}_t|^2}{2(a+2\tau)^2|\bar{Z}_t|^2+2|(1-2\tau)\bar{Z}_t+\gamma^{-1}\bar{W}_t|^2}\Big)^{1/2}\ge \min\Big(\frac{\sqrt{\kappa u} \gamma^{-1}}{\sqrt{8}\alpha},\frac{1}{2}\Big).
\end{align*}

Define
\begin{align} \label{eq:Delta}
\Delta((x,y),(\bar{x},\bar{y})):=r_s((x,y),(\bar{x},\bar{y}))-\epsilon r_l((x,y),(\bar{x},\bar{y}))
\end{align}
for $(x,y),(\bar{x},\bar{y})\in\mathbb{R}^{2d}$ and 
\begin{align} \label{eq:D_K}
D_{\mathcal{K}}:=\sup_{((x,y),(\bar{x},\bar{y}))\in\mathbb{R}^{4d}:(x-\bar{x},y-\bar{y})\in \mathcal{K}}\Delta((x,y),(\bar{x},\bar{y})),
\end{align}
where the compact set $\mathcal{K}\subset\mathbb{R}^{2d}$ is given by 
\begin{align} \label{eq:K}
\mathcal{K}:=\{(z,w)\in\mathbb{R}^{2d}: \gamma^{-2} uz\cdot (Kz)+(1/2)|(1-2\tau)z+\gamma^{-1}w|^2+(1/2)|\gamma^{-1}w|^2 
\leq\mathcal{R}\}.
\end{align}
with
\begin{align} \label{eq:mathcalR}
\mathcal{R}=(1/\tau)(8 u \1_{\{R>0\}}+L_g uR^2)\gamma^{-2}.
\end{align}

We define the metric $\rho:\mathbb{R}^{2d}\times\mathbb{R}^{2d}\to[0,\infty)$ by 
\begin{align} \label{eq:rho}
\rho((x,y),(\bar{x},\bar{y})):= f((\Delta((x,y),(\bar{x},\bar{y}))\wedge D_{\mathcal{K}})+\epsilon r_l((x,y),(\bar{x},\bar{y})))
\end{align} 
for $(x,y),(\bar{x},\bar{y})\in\mathbb{R}^{2d}$,
where $\Delta$ and $D_{\mathcal{K}}$ are given in \eqref{eq:Delta} and \eqref{eq:D_K}.
The function $f$ is an increasing concave function defined by 
\begin{align} \label{eq:f}
f(r)&:=\int_0^r \phi(s)\psi(s) \rmd s, 
\end{align}
where
\begin{equation} \label{eq:f_definitions}
\begin{aligned}
 \phi(s)&:= \exp\Big(-\frac{\alpha\gamma^{2}}{4 u} \frac{(s\wedge R_1)^2}{2}\Big), && \qquad\Phi(s)=\int_0^s \phi(x) \rmd x,
\\ \psi(s)&:= 1-\frac{\hat{c}}{2} \gamma u^{-1} \int_0^{s\wedge R_1} \Phi(x) \phi(x)^{-1} \rmd x, && \qquad\hat{c}=\frac{1}{u^{-1} \gamma \int_0^{R_1}\Phi(s)\phi(s)^{-1}\rmd s},
\end{aligned}
\end{equation}
and where $R_1$ is given by 
\begin{align} \label{eq:R_1}
R_1:=\sup_{((x,y),(\bar{x},\bar{y})):\Delta((x,y),(\bar{x},\bar{y}))\leq D_{\mathcal{K}}} r_s(((x,y),(\bar{x},\bar{y}))).
\end{align}
The construction of the function $f$ is adapted from \cite{Eb16}. Since $\psi(s)\in[1/2,1]$, it holds for $r\geq 0$
\begin{align} \label{eq:f_property}
f'(R_1)r=(\phi(R_1)/2) r\leq \Phi(r)/2\leq f(r)\leq \Phi(r)\leq r.
\end{align}
Note that the constant $R_1$ is finite and $R_1\leq \sup_{\Delta((x,y),(\bar{x},\bar{y})) \leq D_{\mathcal{K}}} 2\Delta((x,y),(\bar{x},\bar{y}))\leq 2 D_{\mathcal{K}}$ holds, since $\Delta((x,y),(\bar{x},\bar{y}))=r_s((x,y),(\bar{x},\bar{y}))-\epsilon r_l((x,y),(\bar{x},\bar{y}))\geq (1/2)r_s((x,y),(\bar{x},\bar{y}))$ for any $(x,y),(\bar{x},\bar{y})\in\mathbb{R}^{2d}$ by \eqref{eq:epsilon}. 
Hence, $\hat{c}$ given in \eqref{eq:f_definitions} and $f$ are well-defined. Further,
\begin{align*}
R_1 \le 2 D_{\mathcal{K}} &  
 \le 2 \sup_{((x,y),(\bar{x},\bar{y}))\in\mathbb{R}^{4d}:(x-\bar{x},y-\bar{y})\in \mathcal{K}} (\mathcal{E}^{-1}-2\epsilon) r_l((x,y),(\bar{x},\bar{y}))\le 2(\mathcal{E}^{-1}-2\epsilon)\sqrt{\mathcal{R}}.
\end{align*}
The constant $R_1$ is also bounded from below by 
\begin{align*}
R_1\ge \sup_{((x,y),(\bar{x},\bar{y})):\Delta((x,y),(\bar{x},\bar{y}))\leq D_{\mathcal{K}}} 2\epsilon r_l(((x,y),(\bar{x},\bar{y}))) \ge 2\epsilon \sqrt{\mathcal{R}},
\end{align*}
since $\Delta((x,y),(\bar{x},\bar{y}))\leq D_{\mathcal{K}}$ for all $(x,y),(\bar{x},\bar{y})\in\mathbb{R}^{2d}$ such that $r_l((x,y),(\bar{x},\bar{y}))^2=\mathcal{R}$. By \eqref{eq:mathcalR}, \eqref{eq:epsilon}, \eqref{eq:mathcalE}, the two bounds on $R_1$ imply the relation \eqref{eq:relation_R1} of $R$ and $R_1$ given in \Cref{thm:contraction_underdamped}.

 By this construction for the metric $\rho$, it holds $(\Delta((x,y),(\bar{x},\bar{y}))\wedge D_{\mathcal{K}})+\epsilon r_l((x,y),(\bar{x},\bar{y}))=r_s((x,y),(\bar{x},\bar{y}))$ for $\Delta((x,y),(\bar{x},\bar{y}))\leq D_{\mathcal{K}}$, and in particular for $r_l((x,y),(\bar{x},\bar{y}))^2\leq\mathcal{R}$. Further,
$(\Delta((x,y),(\bar{x},\bar{y}))\wedge D_{\mathcal{K}})+\epsilon r_l((x,y),(\bar{x},\bar{y}))=D_{\mathcal{K}}+\epsilon r_l((x,y),(\bar{x},\bar{y}))$ for $\Delta((x,y),(\bar{x},\bar{y}))>D_{\mathcal{K}}$ and in particular for $r_s((x,y),(\bar{x},\bar{y}))>R_1$.


If $R=0$, then $\mathcal{K}=\{(0,0)\}$ and hence 
$D_{\mathcal{K}}=R_1=0$ and $f(r)=r$. In this case, we can omit the factor $\epsilon$ in \eqref{eq:rho} and \eqref{eq:rho_N} and set $\rho((x,y),(\bar{x},\bar{y}))=r_l((x,y),(\bar{x},\bar{y}))$ for simplicity. 


\begin{lemma} \label{lem:rho_metric}
The function $\rho$ given in \eqref{eq:rho} defines a metric on $\mathbb{R}^{2d}$ and is equivalent to the Euclidean distance on $\mathbb{R}^{2d}$. 
\end{lemma}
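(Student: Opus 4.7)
The plan is to rewrite the metric as $\rho = f \circ \tilde\rho$, where
\[
\tilde\rho((x,y),(\bar{x},\bar{y})) := \min\bigl(r_s((x,y),(\bar{x},\bar{y})),\; D_{\mathcal K} + \epsilon\, r_l((x,y),(\bar{x},\bar{y}))\bigr)
\]
(this follows from $\Delta = r_s - \epsilon r_l$ and the elementary identity $(a-b)\wedge D + b = a \wedge (D+b)$), and then to argue in three steps: (i) $r_s$ and $r_l$ are norm-induced metrics on $\mathbb{R}^{2d}$, both equivalent to the Euclidean distance; (ii) $\tilde\rho$ is a metric on $\mathbb{R}^{2d}$; (iii) $f$ is strictly increasing and concave with $f(0)=0$, so $\rho = f \circ \tilde\rho$ inherits the metric properties from $\tilde\rho$ and, via \eqref{eq:f_property}, the Euclidean equivalence.

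Step (i) is linear algebra: $r_l^2$ is a quadratic form in $(x-\bar{x},\,y-\bar{y})$ with positive-definite diagonal contribution $\gamma^{-2}u(x-\bar{x})\cdot K(x-\bar{x})$ plus a sum of two squares, and $r_s$ is a sum of two seminorms of which the first, $\alpha|x-\bar{x}|$, controls the $x$-component since $\alpha > 0$; both depend only on $(x-\bar{x},\,y-\bar{y})$ and are translation-invariant. Step (iii) is also short: $\phi > 0$ and $\psi \in [1/2,1]$, with both $\phi, \psi$ non-increasing, so $f' = \phi\psi$ is strictly positive and non-increasing, making $f$ strictly increasing and concave with $f(0)=0$ and hence subadditive on $[0,\infty)$. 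Subadditivity plus monotonicity transfers the triangle inequality from $\tilde\rho$ to $\rho$; strict monotonicity yields positive definiteness; and the sandwich $f'(R_1)\,s \le f(s) \le s$ from \eqref{eq:f_property}, combined with $\epsilon r_l \le \tilde\rho \le r_s$ (the lower bound obtained by case-splitting the minimum and using $2\epsilon r_l \le r_s$) and the Euclidean equivalence of $r_l$ and $r_s$, gives equivalence of $\rho$ with the Euclidean distance.

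The only mildly delicate part is step (ii), the triangle inequality for $\tilde\rho$, which I would prove by a four-case analysis on which branch of the minimum realises $\tilde\rho((x,y),(x',y'))$ and $\tilde\rho((x',y'),(\bar{x},\bar{y}))$. When both realisers are $r_s$, the triangle inequality of $r_s$ and the bound $\tilde\rho \le r_s$ close the argument directly. When both are $D_{\mathcal K} + \epsilon r_l$, the triangle inequality of $r_l$ leaves a slack of $+\, D_{\mathcal K}$, which is absorbed because the right-hand side carries two copies of $D_{\mathcal K}$. In the two mixed cases, the inequality $2\epsilon r_l \le r_s$ established just before the lemma converts a stray $\epsilon r_l$ on the right-hand side into a dominating $r_s$. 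Nonnegativity, symmetry, and positive definiteness of $\tilde\rho$ are immediate once one notes that $\tilde\rho = 0$ forces $r_s = 0$ unless $D_{\mathcal K} = 0$, in which case it forces $r_l = 0$; in either case $(x,y) = (\bar{x},\bar{y})$ by step (i). The fact that $D_{\mathcal K} + \epsilon r_l$ is not itself a metric on the diagonal — it does not vanish there — is precisely why no general lemma on minima of metrics can be quoted, and the case analysis is unavoidable.
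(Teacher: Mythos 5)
Your proposal is correct and follows essentially the same route as the paper: the paper likewise rewrites $(\Delta\wedge D_{\mathcal K})+\epsilon r_l$ as $r_s\wedge(D_{\mathcal K}+\epsilon r_l)$, handles the mixed cases of the triangle inequality with the bound $\epsilon r_l\le \tfrac12 r_s$ (your four-case split is just an unpacked version of its single chain of minima), and transfers everything through the concave increasing $f$ via \eqref{eq:f_property}, together with the bounds $\epsilon r_l\le r_s\wedge(D_{\mathcal K}+\epsilon r_l)\le r_s$ for the Euclidean equivalence. No substantive difference.
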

\begin{proof}
Symmetry and positive definiteness holds directly. Hence, $\rho$ is a semimetric. 
To prove the triangle inequality, we note that for $(x, y),(\bar{x},\bar{y}),(\hat{x},\hat{y})\in\mathbb{R}^{2d}$,
\begin{align*}
(\Delta & ((x, y),(\bar{x},\bar{y}))\wedge D_{\mathcal{K}})+\epsilon r_l((x, y),(\bar{x},\bar{y}))
\\ &  = r_s((x, y),(\bar{x},\bar{y}))\wedge (D_{\mathcal{K}}+\epsilon r_l((x, y),(\bar{x},\bar{y})))
\\ & \leq   (r_s((x, y),(\hat{x},\hat{y})) +  r_s((\hat{x}, \hat{y}),(\bar{x},\bar{y})) ) 
 \wedge (D_{\mathcal{K}}+ \epsilon r_l((x, y),(\hat{x},\hat{y})) + \epsilon r_l((\hat{x}, \hat{y}),(\bar{x},\bar{y}))) 
\\ & \leq   (r_s((x, y),(\hat{x},\hat{y})) +  r_s((\hat{x}, \hat{y}),(\bar{x},\bar{y})) ) 
 \wedge (D_{\mathcal{K}}+ \epsilon r_l((x, y),(\hat{x},\hat{y}))+ D_{\mathcal{K}} +  \epsilon r_l((\hat{x}, \hat{y}),(\bar{x},\bar{y}))) 
\\ & \quad \wedge (D_{\mathcal{K}}+ \epsilon r_l((x, y),(\hat{x},\hat{y})) +  (1/2) r_s((\hat{x}, \hat{y}),(\bar{x},\bar{y}))) 
 \wedge (D_{\mathcal{K}}+ (1/2) r_s((x, y),(\hat{x},\hat{y})) + \epsilon r_l((\hat{x}, \hat{y}),(\bar{x},\bar{y}))) 
\\& \leq (\Delta((x, y),(\bar{x},\bar{y}))\wedge D_{\mathcal{K}})+\epsilon r_l((x, y),(\bar{x},\bar{y})) + (\Delta((x, y),(\hat{x},\hat{y}))\wedge D_{\mathcal{K}})+\epsilon r_l((\hat{x}, \hat{y}),(\bar{x},\bar{y})),
\end{align*}
since $r_l$ and $r_s$ are metrics on $\mathbb{R}^{2d}$ and $\epsilon r_l((x, y),(\bar{x},\bar{y}))\leq (1/2) r_s((x, y),(\bar{x},\bar{y}))$. Since $f$ given in \eqref{eq:f} is a concave function, $\rho((x, y),(\bar{x},\bar{y}))\leq\rho((x, y),(\hat{x},\hat{y}))+\rho((\hat{x}, \hat{y}),(\bar{x},\bar{y}))$ for  $(x, y),(\bar{x},\bar{y}),(\hat{x},\hat{y})\in\mathbb{R}^{2d}$. 
Hence, $\rho$ defines a metric. 

Further, it holds for all $(x,y),(\bar{x},\bar{y})\in\mathbb{R}^{2d}$,
\begin{equation} \label{eq:rho_upperbound}
\begin{aligned}
\Delta((x,y),(\bar{x},\bar{y}))\wedge D_{\mathcal{K}}+\epsilon r_l((x,y),(\bar{x},\bar{y}))&\leq  r_s((x,y),(\bar{x},\bar{y})) \leq  \max(\alpha+1,\gamma^{-1})(|x-\bar{x}|+|y-\bar{y}|) 
\\ & \leq \max(\alpha+1,\gamma^{-1})\sqrt{2}|(x,y)-(\bar{x},\bar{y})|.
\end{aligned}
\end{equation}
and
\begin{equation} \label{eq:rho_lowerbound}
\begin{aligned}
\Delta((x,y),(\bar{x},\bar{y}))\wedge D_{\mathcal{K}}+\epsilon r_l((x,y),(\bar{x},\bar{y})) &\geq \epsilon r_l((x,y),(\bar{x},\bar{y}))\geq \epsilon(\kappa u \gamma^{-2}|x-\bar{x}|^2+\frac{1}{2}\gamma^{-2}|y-\bar{y}|^2)^{1/2} 
\\ & \geq \epsilon\gamma^{-1}\min(\sqrt{\kappa u },1/\sqrt{2})|(x,y)-(\bar{x},\bar{y})|
\\ & \geq \epsilon\gamma^{-1}\min(\sqrt{\kappa u /2},1/2)(|x-\bar{x}|+|y-\bar{y}|).
\end{aligned}
\end{equation}
Then, by \eqref{eq:f_property}, 
\begin{align} \label{eq:rho_equivalence}
\mathbf{C}_1 |(x,y)-(\bar{x},\bar{y})|\leq \rho((x,y),(\bar{x},\bar{y})) \leq \mathbf{C}_2 |(x,y)-(\bar{x},\bar{y})|
\end{align}
with 
$\mathbf{C}_1=f'(R_1)\epsilon\gamma^{-1}\min(\sqrt{\kappa u},1/\sqrt{2}) $ and $\mathbf{C}_2= \sqrt{2} \max(\alpha+1,\gamma^{-1})$.

\end{proof}

%

\subsection{Coupling for Langevin dynamics} \label{sec:coupling_confined}
To prove \Cref{thm:contraction_underdamped} and \Cref{thm:contraction_nonlinear_conf} we construct a coupling of two solutions to \eqref{eq:KFP}. The construction is partially adapted from the coupling approach introduced in \cite{EbGuZi19}. Recall that $\tilde{b}\equiv 0$ in \Cref{thm:contraction_underdamped}.

Let $\xi$ be a positive constant, which we take finally to the limit $\xi\to0$. Let $(B^{\rc}_t)_{t\geq 0}$ and $(B^{\sc}_t)_{t\geq 0}$ be two independent $d$-dimensional Brownian motions and let $\bar{\mu}_0,\bar{\nu}_0$ be two probability measures on $\mathbb{R}^{2d}$. The coupling $((\bar{X}_t,\bar{Y}_t),(\bar{X}_t',\bar{Y}_t'))_{t\geq 0}$ of two copies of solutions to  \eqref{eq:KFP} is a solution to the SDE on $\mathbb{R}^{2d}\times\mathbb{R}^{2d}$ given by
\begin{equation} \label{eq:KFP_coupling_nonl}
\begin{aligned}
&\begin{cases} 
\rmd \bar{X}_t&=\bar{Y}_t \rmd t
\\  \rmd \bar{Y}_t&=(-\gamma\bar{Y}_t+ ub(\bar{X}_t)+u \int_{\mathbb{R}^d}\tilde{b}(\bar{X}_t,z)\bar{\mu}_t^x (\rmd z))\rmd t+\sqrt{2\gamma u}\sc(Z_t,W_t)\rmd B_t^{\sc} +\sqrt{2\gamma u}\rc(Z_t,W_t)\rmd B_t^{\rc}
\end{cases}
\\
&\begin{cases}
\rmd \bar{X}_t'&=\bar{Y}_t' \rmd t
\\  \rmd \bar{Y}_t'&=(-\gamma \bar{Y}_t' +u b(\bar{X}_t') +u \int_{\mathbb{R}^d}\tilde{b}(\bar{X}_t',z)\bar{\nu}_t^x (\rmd z))\rmd t+\sqrt{2\gamma u}\sc(Z_t,W_t)\rmd B_t^{\sc}
\\ & +\sqrt{2\gamma u}\rc(Z_t,W_t)(\Id-2e_t e_t^T)\rmd B_t^{\rc}, 
\end{cases}
\\ & (\bar{X}_0,\bar{Y}_0)\sim \bar{\mu}_0, \quad (\bar{X}_0',\bar{Y}_0')\sim \bar{\nu}_0, 
\end{aligned}
\end{equation}
where $\bar{\mu}_t^x=\Law(\bar{X}_t)$ and $\bar{\nu}_t^x=\Law(\bar{X}_t')$.
Further, $Z_t=\bar{X}_t-\bar{X}_t'$, $W_t=\bar{Y}_t-\bar{Y}_t'$, $Q_t=Z_t+\gamma^{-1}W_t$ and $e_t=Q_t/|Q_t|$ if $Q_t\neq 0$ and $e_t=0$ otherwise.
The functions $\rc, \sc:\mathbb{R}^{2d}\to[0,1)$ are Lipschitz continuous and satisfy $\rc^2+\sc^2\equiv 1$ and 
\begin{equation}\label{eq:rc_sc}
\begin{aligned}
&\mathrm{rc}(z,w)=0 &&\qquad \text{ if } |z+\gamma^{-1}w|=0 
 \text{ or } (r_s(z,w))-\epsilon(r_l(z,w))\geq D_{\mathcal{K}}+\xi\cdot \1_{\{D_{\mathcal{K}}>0\}},
\\ &\mathrm{rc}(z,w)=1 && \qquad \text{ if } |z+\gamma^{-1}w|\geq \xi 
\text{ and } (r_s(z,w))-\epsilon(r_l(z,w))\leq D_{\mathcal{K}} \text{ and } D_{\mathcal{K}}>0
\end{aligned} 
\end{equation}
for $(z,w)\in\mathbb{R}^{2d}$,
where $\epsilon$ is given in \eqref{eq:epsilon}. Analogously to \eqref{eq:r_l} and \eqref{eq:r_s},
$r_l(z,w)^2=\gamma^{-2} uz\cdot(Kz)+(1/2)|(1-2\tau)z+\gamma^{-1}w|^2+(1/2)\gamma^{-2}|w|^2 $ and $r_s(z,w)=\alpha|z|+|z+\gamma^{-1} w|$.

We note that by Levy's characterization, for any solution to \eqref{eq:KFP_coupling} the processes 
\begin{align*}
B_t&:=\int_0^t \mathrm{sc}(Z_s,W_s) \rmd B_s^{\mathrm{sc}} +\int_0^t\mathrm{rc}(Z_s,W_s) \rmd B_s^{\mathrm{rc}} \qquad \text{and}
\\ \tilde{B}_t&:=\int_0^t \mathrm{sc}(Z_s,W_s) \rmd B_s^{\mathrm{sc}} +\int_0^t\mathrm{rc}(Z_s,W_s)(\Id-e_s{e_s}^T) \rmd B_s^{\mathrm{rc}}
\end{align*}
are $d$-dimensional Brownian motions. Therefore, \eqref{eq:KFP_coupling} defines a coupling between two solutions to \eqref{eq:KFP}. The constructed coupling denotes a \textit{reflection coupling} for $\rc\equiv 1$ and $\sc \equiv 0$ and a \textit{synchronous coupling} for $\sc\equiv 1$ and $\rc \equiv 0$.
Note that we obtain a synchronous coupling if $D_{\mathcal{K}}=0$. 

The processes $(Z_t)_{t\geq 0}$, $(W_t)_{t\geq 0}$ and $(Q_t)_{t\geq 0}$ satisfy the following SDEs:
\begin{equation} \label{eq:differenceproc_nonl}
\begin{aligned}
\rmd Z_t&=W_t \rmd t=(Q_t-\gamma Z_t)\rmd t,
\\ \rmd W_t&=-\gamma W_t \rmd t+ u\Big( b(\bar{X}_t)- b(\bar{X}_t')+ \int_{\mathbb{R}^d}\tilde{b}(\bar{X}_t,z)\bar{\mu}_t^x (\rmd z)-\int_{\mathbb{R}^d}\tilde{b}(\bar{X}_t',\tilde{z})\bar{\nu}_t^x(\rmd \tilde{z})\Big)\rmd t
\\ &  +\sqrt{8\gamma u}\rc(Z_t,W_t)e_t{e_t}^T\rmd B_t^{\rc},
\\  \rmd Q_t&= \gamma^{-1} u\Big(b(\bar{X}_t)-b(\bar{X}_t')+\int_{\mathbb{R}^d}\tilde{b}(\bar{X}_t,z)\bar{\mu}_t^x (\rmd z)-\int_{\mathbb{R}^d}\tilde{b}(\bar{X}_t',\tilde{z})\bar{\nu}_t^x (\rmd \tilde{z})\Big)\rmd t
 +\sqrt{8\gamma^{-1} u}\rc(Z_t,W_t)e_t{e_t}^T\rmd B_t^{\rc}.
\end{aligned}
\end{equation} 
If $Q_t=0$, we note that $Z_t$ is contractive, which we exploit in the proof of \Cref{lem:contr_inside_nonl}.

\section{Uniform in time propagation of chaos} \label{sec:unif_prop}
We provide uniform in time propagation of chaos bounds for the mean-field particle system corresponding to the nonlinear  Langevin dynamics of McKean-Vlasov type.



Fix $N\in\mathbb{N}$.
We consider the metric $\rho_N:\mathbb{R}^{2Nd}\times\mathbb{R}^{2Nd}\to[0,\infty)$ given by 
\begin{align} \label{eq:rho_N}
\rho_N((x,y),(\bar{x},\bar{y})):=N^{-1}\sum_{i=1}^N\rho((x^i,y^i),(\bar{x}^i,\bar{y}^i)) \qquad \text{for }((x,y),(\bar{x},\bar{y}))\in\mathbb{R}^{2Nd}\times\mathbb{R}^{2Nd},
\end{align}  
where $\rho$ is given in \eqref{eq:rho}.
Since $\rho$ is a metric on $\mathbb{R}^{2d}\times\mathbb{R}^{2d}$ by \Cref{lem:rho_metric}, $\rho_N$ defines a metric on $\mathbb{R}^{2Nd}\times\mathbb{R}^{2Nd}$. By \eqref{eq:rho_upperbound} and \eqref{eq:rho_lowerbound}, $\rho_N$ is equivalent to $l_N^1$ given in \eqref{eq:l_N^1}, i.e., 
\begin{align}\label{eq:rhoN_equivalence}
\mathbf{C}_1/\sqrt{2} \ell_N^1((x,y),(\bar{x},\bar{y}))\leq \rho_N((x,y),(\bar{x},\bar{y})) \leq \mathbf{C}_2/\sqrt{2} \ell_N^1((x,y),(\bar{x},\bar{y}))
\end{align}
with 
$\mathbf{C}_1=\exp(-\Lambda)\min(1,2(L_K+L_g)u\gamma^{-2})/3\gamma^{-1}\min(\sqrt{\kappa u },1/\sqrt{2}) $ and $\mathbf{C}_2= \sqrt{2} \max(2(L_K+L_g)u\gamma^{-2}+1,\gamma^{-1})$.

 For $t\geq 0$, we denote by $\bar{\mu}_t$ the law of the process $(\bar{X}_t,\bar{Y}_t)$, where $(\bar{X}_s,\bar{Y}_s)_{s\geq 0}$ is a solution to \eqref{eq:KFP} with initial distribution $\bar{\mu}_0$. We denote by $\mu_t^N$ the law of $\{X_t^{i,N},Y_t^{i,N}\}_{i=1}^N$, where $(\{X_s^{i,N},Y_s^{i,N}\}_{i=1}^N)_{s\geq 0}$ is a solution to \eqref{eq:KFP_meanfield_nongradient} with initial distribution $\mu^N_0=\mu_0^{\otimes N}$.

\begin{theorem}[Propagation of chaos for Langevin dynamics]\label{thm:propofchaos}
Suppose \Cref{ass:b} and \Cref{ass:tildeb} hold. Let $\bar{\mu}_0$ and $\mu_0$ be two probability distributions on $\mathbb{R}^{2d}$ with finite second moment. Suppose that \eqref{eq:cond_gamma} holds. 
If $\tilde{L}$ satisfies \eqref{eq:condition_tildeL}, 
then 
\begin{align*}
&\mathcal{W}_{1,\rho_N}(\bar{\mu}_t^{\otimes N},\mu_t^N)\leq e^{-\tilde{c}t}\mathcal{W}_{1,\rho_N}(\bar{\mu}_0^{\otimes N},\mu_0^N)+\mathcal{C}_1\tilde{c}^{-1}N^{-1/2} && \text{and}
\\ & \mathcal{W}_{1,\ell_N^1}(\bar{\mu}_t^{\otimes N},\mu_t^N)\leq M_1 e^{-\tilde{c}t}\mathcal{W}_{1,\ell_N^1}(\bar{\mu}_0^{\otimes N},\mu_0^N)+M_2\mathcal{C}_1\tilde{c}^{-1}N^{-1/2},
\end{align*}
where the distance $\rho_N$ is defined in \eqref{eq:rho_N} and $\tilde{c}=c/2$ with $c$ given in \eqref{eq:c_thm}. The constant $\mathcal{C}_1$ depends on $\gamma$, 
$d$, $u$, $R$, $\kappa$, $L_g$, $\tilde{L}$ and on the second moment of $\bar{\mu}_0$. The constants $M_1$ and is given in \eqref{eq:M1} and \eqref{eq:M2} and $M_2$ is given by
\begin{align} \label{eq:M2}
M_2=3\exp(\Lambda)\max\Big(1,\frac{\gamma^2}{2(L_K+L_g)u} \Big)\gamma\max(\sqrt{2/(\kappa u)},2).
\end{align}
\end{theorem}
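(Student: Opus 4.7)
The plan is to mirror the coupling used for Theorem 3.2, but now applied componentwise between $N$ independent copies $(\bar{X}^i,\bar{Y}^i)_{i=1}^N$ of the McKean--Vlasov dynamics \eqref{eq:KFP} and the $N$-particle system $(X^{i,N},Y^{i,N})_{i=1}^N$ from \eqref{eq:KFP_meanfield_nongradient}. Concretely, for each $i$ I would drive the pair $((\bar{X}^i_t,\bar{Y}^i_t),(X^{i,N}_t,Y^{i,N}_t))$ by the reflection/synchronous coupling of Section~4.2, using Brownian motions $(B^{i,\mathrm{sc}},B^{i,\mathrm{rc}})$ that are independent across the index $i$; here the reflection is based on the individual discrepancy $Z^i=\bar{X}^i-X^{i,N}$, $W^i=\bar{Y}^i-Y^{i,N}$, as in \eqref{eq:KFP_coupling_nonl}. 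Because the coupling decouples across components up to the mean-field drift, one can apply the one-component estimates of Section~4.2 separately to each pair and then average via $\rho_N=N^{-1}\sum_i\rho$.

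Next I would apply Itô's formula to $\rho((\bar{X}^i_t,\bar{Y}^i_t),(X^{i,N}_t,Y^{i,N}_t))$ as in the proof of Theorem~3.2, decomposing into the local $r_s$ and $r_l$ regions through the concave envelope $f$. The contraction arguments from Lemmas on $r_l$ and $r_s$ (the large/small distance parts in Section~4) reproduce the basic rate $c$. The new feature is the interaction drift difference, which for component $i$ reads
\begin{align*}
\int_{\mathbb{R}^d}\!\tilde b(\bar X^i_t,z)\,\bar\mu_t^x(\mathrm dz)-\tfrac1N\sum_{j=1}^N\tilde b(X^{i,N}_t,X^{j,N}_t)
= A^i_t+F^i_t,
\end{align*}
where the coupling error $A^i_t=N^{-1}\sum_j\bigl[\tilde b(\bar X^i_t,\bar X^j_t)-\tilde b(X^{i,N}_t,X^{j,N}_t)\bigr]$ is controlled by $\tilde L\bigl(|Z^i_t|+N^{-1}\sum_j|Z^j_t|\bigr)$ and, after averaging over $i$, is absorbed into the rate exactly as in \eqref{eq:condition_tildeL} (accounting for the factor-of-two reduction from $c$ to $\tilde c=c/2$). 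The fluctuation term $F^i_t=\int\tilde b(\bar X^i_t,z)\bar\mu_t^x(\mathrm dz)-N^{-1}\sum_j\tilde b(\bar X^i_t,\bar X^j_t)$ is a sum of i.i.d., centred terms conditional on $\bar X^i_t$, so by independence
\begin{align*}
\mathbb{E}\bigl[|F^i_t|^2\bigr]\le \tfrac1N\,\tilde L^{\,2}\mathbb{E}\bigl[|\bar X^j_t|^2+1\bigr]^{1/2}\cdot C(d),
\end{align*}
giving an $O(N^{-1/2})$ contribution per component after integrating against $\rho$.

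Combining these ingredients yields a differential inequality of the form
\begin{align*}
\tfrac{\mathrm d}{\mathrm dt}\,\mathbb{E}\bigl[\rho_N\bigr]\le -\tilde c\,\mathbb{E}\bigl[\rho_N\bigr]+\mathcal C_1 N^{-1/2},
\end{align*}
whose Gronwall integration delivers the first bound in the statement. The second bound then follows immediately from the equivalence \eqref{eq:rhoN_equivalence} between $\rho_N$ and $\ell_N^1$, with $M_1$ from \eqref{eq:M1} accounting for the initial comparison $\rho_N\le(\mathbf C_2/\sqrt2)\ell_N^1$ and $M_2$ from \eqref{eq:M2} for the reverse comparison $\ell_N^1\le(\sqrt2/\mathbf C_1)\rho_N$.

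The main obstacle is making $\mathcal C_1$ truly uniform in time, which requires a uniform-in-$t$ second moment bound for the nonlinear process $(\bar X_t,\bar Y_t)$; this is where Assumption~\ref{ass:b} and the smallness condition \eqref{eq:condition_tildeL} on $\tilde L$ have to be used not only to close the contraction argument but also to derive a Lyapunov-type estimate for $\mathbb{E}[|\bar X_t|^2+|\bar Y_t|^2]$ uniformly in $t$, so that the $N^{-1/2}$ fluctuation constant does not blow up. A secondary technical point is the $\xi\downarrow0$ passage for the reflection/synchronous coupling: the coupling \eqref{eq:KFP_coupling_nonl} is only defined in an $\xi$-approximate sense and the Itô argument should be carried out for fixed $\xi$, with the $N^{-1/2}$ estimate preserved in the limit, as in the corresponding step of the proof of Theorem~3.2.
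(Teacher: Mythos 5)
Your proposal follows essentially the same route as the paper: a componentwise reflection/synchronous coupling with independent Brownian motions per particle, the same splitting of the interaction drift into a Lipschitz-controlled coupling error (absorbed via \eqref{eq:condition_tildeL} at the halved rate $\tilde c=c/2$) plus a centred law-of-large-numbers fluctuation of size $O(N^{-1/2})$, a uniform-in-time second moment (Lyapunov) bound for the nonlinear process to make the constant time-uniform, Grönwall, and the equivalence \eqref{eq:rhoN_equivalence} for the $\ell_N^1$ bound. This matches the paper's proof via Lemmas \ref{lem:contr_outside}, \ref{lem:contr_inside} and \ref{lem:momentbound}, so it is correct in approach (only your displayed variance estimate for $F^i_t$ is stated imprecisely, but the underlying argument is the right one).
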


\begin{proof}
The proof is postponed to \Cref{sec:proof_confined}.
\end{proof}

\begin{remark}
For $t\ge 0$, let $\mu_t^N$ and $\nu_t^N$ be the law of $\{X_t^{i,N},Y_t^{i,N}\}_{i=1}^N$ and $\{{X_t'}^{i,N},{Y_t'}^{i,N}\}_{i=1}^N$ where the processes $(\{X_s^{i,N},Y_s^{i,N}\}_{i=1}^N)_{s\ge 0}$ and  $(\{{X_s'}^{i,N},{Y_s'}^{i,N}\}_{i=1}^N)_{s\ge 0}$ are solutions to \eqref{eq:KFP_meanfield_nongradient} with initial distributions $\mu_0^N$ and $\nu_0^N$, respectively. An easy adaptation of the proof of \Cref{thm:propofchaos} shows that if \Cref{ass:b}, \Cref{ass:tildeb}, \eqref{eq:cond_gamma} and \eqref{eq:condition_tildeL} hold, then 
\begin{align*}\mathcal{W}_{1,\rho_N}(\mu_t^N,\nu_t^N)\leq e^{-\tilde{c}t}\mathcal{W}_{1,\rho_N}(\mu_0^N, \nu_0^N)\qquad \text{and} \qquad
\mathcal{W}_{1,\ell_N^1}(\mu_t^N,\nu_t^N)\leq M_1 e^{-\tilde{c}t}\mathcal{W}_{1,\ell_N^1}(\mu_0^N,\nu_0^N),
\end{align*}
where $\rho_N$ and  $M_1$ are given in \eqref{eq:rho_N}, and \eqref{eq:M1}, respectively, and $\tilde{c}=c/2$ with $c$ given in \eqref{eq:c_thm}.
To adapt the proof, a coupling between two copies of $N$ particle systems is applied which is constructed in the same line as \eqref{eq:KFP_coupling}.
\end{remark}

\section{Proofs} \label{sec:proof}

\subsection{Proof of Section~\ref{sec:contr_strongconv} }\label{sec:proof_strongconv}

\begin{proof}[Proof of \Cref{thm:contr_strongconv}]
Given a $d$-dimensional standard Brownian motion on $(B_t)_{t\ge 0}$ and $(x,y),(x',y')\in\mathbb{R}^{2d}$, we consider the synchronous coupling $((X_t,Y_t),(X_t',Y_t'))_{t\ge 0}$ of two copies of solutions to \eqref{eq:underdampLang_pot} on $\mathbb{R}^{2d}\times\mathbb{R}^{2d}$ given by 
\begin{equation}
\begin{aligned}
&\begin{cases}
\rmd X_t&=Y_t \rmd t
\\ \rmd Y_t&=(-\gamma Y_t-u \nabla\rmV(X_t))\rmd t+\sqrt{2\gamma u }\rmd B_t, \qquad (X_0,Y_0)=(x,y)
\end{cases}
\\ &\begin{cases}
\rmd X_t'&=Y_t' \rmd t
\\ \rmd Y_t'&=(-\gamma Y_t'-u \nabla \rmV(X_t'))\rmd t+\sqrt{2\gamma u }\rmd B_t, \qquad (X_0',Y_0')=(x',y').
\end{cases}
\end{aligned}
\end{equation}
Then, the difference process $(Z_t,W_t)_{t\ge 0}=(X_t-X_t',Y_t-Y_t')_{t\ge 0}$ satisfies
\begin{align*}
\begin{cases}
\rmd Z_t=W_t \rmd t
\\ \rmd W_t= (-\gamma W_t- u K Z_t-u (\nabla G(X_t)-\nabla G(X_t')))\rmd t. 
\end{cases}
\end{align*}
We note that since by \Cref{ass:conf_pot}, $G$ is continuously differentiable, convex and has $L_G$-Lipschitz continuous gradients, $G$ is co-coercive (see e.g. \cite[Theorem 2.1.5]{Ne18}), i.e., it holds
\begin{align} \label{eq:cocoercivity}
|\nabla G(x)-\nabla G(x')|^2 \le L_G(\nabla G(x)-\nabla G(x'))\cdot (x-x') \qquad \text{ for all } x,x'\in\mathbb{R}^d.
\end{align}
Let $A,B,C\in\mathbb{R}^{d\times d}$ be positive definite matrices given by
\begin{align*}
A=\gamma^{-2} u K+(1/2)(1-2\lambda)^2 \mathrm{Id}, \qquad B=(1-2\lambda)\gamma^{-1} \mathrm{Id}, \qquad C=\gamma^{-2}\mathrm{Id},
\end{align*}
where $\lambda$ is given in \eqref{eq:lambda} and $\mathrm{Id}$ is the $d\times d$ identity matrix. Then by Ito's formula and Young's inequality, we obtain
\begin{equation} \label{eq:calc_stronconv}
\begin{aligned}
\frac{\rmd}{\rmd t} &(Z_t\cdot(A Z_t)+Z_t\cdot(B W_t)+W_t\cdot (C W_t))
\\ &  = 2W_t\cdot(A Z_t) + W_t \cdot (B W_t)
+ Z_t \cdot (B (-\gamma W_t-uK Z_t-u(\nabla G(X_t)-\nabla G(X_t'))))
\\ & + 2 W_t\cdot (C (-\gamma W_t-uK Z_t-u(\nabla G(X_t)-\nabla G(X_t'))))
\\ & \le -u \gamma^{-1}(1-2\lambda)Z_t\cdot (K Z_t)- (1-2\lambda)\gamma^{-1}u Z_t(\nabla G(X_t)-\nabla G(X_t'))+\gamma^{-3}u^2|\nabla G(X_t)-\nabla G(X_t')|^2
\\ & + Z_t\cdot((2A-\gamma B -2uKC) W_t)+((1-2\lambda)\gamma^{-1}-\gamma^{-1}) |W_t|^2.
\end{aligned}
\end{equation}
By \eqref{eq:cocoercivity}, \eqref{eq:lambda} and \eqref{eq:cond_LG}, it holds
\begin{equation}\label{eq:conseq_cocoerc}
\begin{aligned}
- (1-2\lambda)\gamma^{-1}u Z_t&\cdot(\nabla G(X_t)-\nabla G(X_t'))+\gamma^{-3}u^2|\nabla G(X_t)-\nabla G(X_t')|^2
\\ & \le - ((1-2\lambda)\gamma^{-1}u -\gamma^{-3} L_G u^2) Z_t(\nabla G(X_t)-\nabla G(X_t'))\le 0.
\end{aligned}
\end{equation}
Further by \eqref{eq:lambda}, it holds
\begin{align*}
-u \gamma^{-1} (1-4 \lambda) Z_t\cdot(K Z_t) \le -u (\gamma^{-1}/2)  Z_t\cdot(K Z_t) \le -u (\gamma^{-1}/2)  \kappa |Z_t|^2 \le - \lambda \gamma |Z_t|^2 \le - \lambda \gamma (1-2\lambda)^2|Z_t|^2
\end{align*}
and hence, $-u \gamma^{-1} (1-2 \lambda) Z_t\cdot(K Z_t) \le -2\gamma \lambda Z_t\cdot ( A Z_t) $.
Set $r(t)=r((X_t,Y_t),(X_t',Y_t'))$ with $r$ defined in \eqref{eq:dist_r}.
Then by \eqref{eq:calc_stronconv} and \eqref{eq:conseq_cocoerc}, we obtain
\begin{align*}
\frac{\rmd}{\rmd t} r(t)^2 &= \frac{\rmd}{\rmd t} (Z_t\cdot(A Z_t)+Z_t\cdot(B W_t)+W_t\cdot (C W_t)) 
\\ & \le - 2\lambda \gamma (Z_t \cdot(A Z_t)+ Z_t \cdot (B W_t) + W_t \cdot (C W_t)) = - 2\lambda \gamma r(t)^2.
\end{align*}
Taking the square root and applying Gr\"onwall's inequality yields
\begin{align*}
r(t)\le e^{-ct} r(0)
\end{align*}
with $c$ given in \eqref{eq:rate_strongconv}. Then for all $p\ge 1$ it holds
\begin{align*}
\mathcal{W}_{p,r}(\mu_t,\nu_t)\le \mathbb{E}[r(t)^p]^{1/p}\le e^{-ct}\mathbb{E}[r(0)^p]^{1/p}.
\end{align*}
We take the infimum over all couplings $\gamma\in \Pi(\mu_0,\nu_0)$ and obtain the first bound. For the second bound we note that for any $(x, y),(x',y')\in\mathbb{R}^{2d}$ 
\begin{align*}
\sqrt{\min(u\gamma^{-2}\kappa,\gamma^{-2}/2)}(|x-x'|^2+|y-y'|^2)^{1/2}&\le r((x,y),(x',y'))
\\ & \le \sqrt{\max(u\gamma^{-2}L_K+1, 3/2\gamma^{-2})}(|x-x'|^2+|y-y'|^2)^{1/2}.
\end{align*}
Hence, the second bound in \Cref{thm:contr_strongconv} holds with $M$ given in \eqref{eq:M_strongconv}.
\end{proof}

\subsection{Proofs of Section~\ref{sec:contr_underdamped} and Section~\ref{sec:contr_nonlinear}}\label{sec:proof_conf_contr}

To show \Cref{thm:contraction_nonlinear_conf}, we prove two local contraction results using the coupling defined in \eqref{eq:KFP_coupling_nonl}.
We write $r_l(t)=r_l((\bar{X}_t,\bar{Y}_t),(\bar{X}_t',\bar{Y}_t'))$, $r_s(t)=r_s((\bar{X}_t,\bar{Y}_t),(\bar{X}_t',\bar{Y}_t'))$ and $\Delta(t)=\Delta((\bar{X}_t,\bar{Y}_t),(\bar{X}_t',\bar{Y}_t'))$. 
\begin{lemma} \label{lem:contr_outside_nonl}
Suppose \Cref{ass:b}, \Cref{ass:tildeb} and  \eqref{eq:cond_gamma} hold. 
Let $((\bar{X}_s,\bar{Y}_s),(\bar{X}_s',\bar{Y}_s'))_{s\geq 0}$ be a solution to \eqref{eq:KFP_coupling_nonl}. 
Then for $t\geq 0$ with $\Delta(t)\geq D_{\mathcal{K}} $, it holds 
\begin{equation}\label{eq:contr_outside_nonl}
\begin{aligned} 
\rmd r_l(t)&\leq -c_1 r_l(t) \rmd t  
  + \frac{|(1-2\tau)Z_t+2\gamma^{-1}W_t|}{2\gamma r_l(t)}\tilde{L} u(\mathbb{E}[|Z_t|]+|Z_t|)\rmd t 
\\ & + \sqrt{8\gamma^{-1} u}\rc(Z_t,W_t)\frac{(1-2\tau)Z_t+2\gamma^{-1}W_t}{2r_l(t)}\cdot e_t{e_t}^T\rmd B_t,
\end{aligned} 
\end{equation}
where $c_1=\tau\gamma/2$ with $\tau$ given in \eqref{eq:tau1}.
\end{lemma}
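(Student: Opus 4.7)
I would apply Itô's formula to $r_l(t)^2 = Z_t\cdot(AZ_t)+(1{-}2\tau)\gamma^{-1}Z_t\cdot W_t+\gamma^{-2}|W_t|^2$ with $A=\gamma^{-2}uK+\tfrac12(1{-}2\tau)^2\Id$, substitute the dynamics of $(Z_t,W_t)$ from \eqref{eq:differenceproc_nonl}, and use \Cref{ass:b} to split $b(\bar{X}_t)-b(\bar{X}_t')=-KZ_t+(g(\bar{X}_t)-g(\bar{X}_t'))$. The cross-term $2\gamma^{-2}u(KZ)\cdot W$ coming from $2(AZ)\cdot W$ cancels against the $-2\gamma^{-2}uW\cdot KZ$ produced by $2\gamma^{-2}W\cdot\rmd W$, leaving
\begin{align*}
\rmd r_l^2=\bigl[-(1{-}2\tau)\gamma^{-1}uZ_t\cdot KZ_t - 2\tau(1{-}2\tau)Z_t\cdot W_t - (1{+}2\tau)\gamma^{-1}|W_t|^2 + \mathcal{G}_t + \mathcal{I}_t + 8\gamma^{-1}u\rc^2\bigr]\rmd t + \rmd M_t,
\end{align*}
where $\mathcal{G}_t=\gamma^{-1}u((1{-}2\tau)Z_t+2\gamma^{-1}W_t)\cdot(g(\bar{X}_t)-g(\bar{X}_t'))$, $\mathcal{I}_t$ is the analogous interaction contribution, and $\rmd M_t$ is the martingale $\sqrt{8\gamma^{-1}u}\,\rc\,((1{-}2\tau)Z_t+2\gamma^{-1}W_t)\cdot e_t\,e_t^T\rmd B_t^{\rc}$.

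Comparing this drift with $-2\tau\gamma r_l^2=-2\tau\gamma^{-1}uZ\cdot KZ-\tau\gamma(1{-}2\tau)^2|Z|^2-2\tau(1{-}2\tau)Z\cdot W-2\tau\gamma^{-1}|W|^2$, the $Z\cdot W$ terms match and a slack of $-(1{-}4\tau)\gamma^{-1}uZ\cdot KZ-\gamma^{-1}|W|^2$ remains to absorb $\mathcal{G}_t+8\gamma^{-1}u\rc^2$ up to a constant. For $\mathcal{G}_t$ I would apply \eqref{eq:strongconv}: the scalar piece $(1{-}2\tau)\gamma^{-1}uZ\cdot(g(\bar{X})-g(\bar{X}'))$ is nonpositive on $\{|Z|\ge R\}$ and at most $\gamma^{-1}uL_gR^2\1_{\{R>0\}}$ by the crude Lipschitz bound on $\{|Z|<R\}$, while the cross piece $2\gamma^{-2}uW\cdot(g(\bar{X})-g(\bar{X}'))$ is bounded via Young ($2ab\le\gamma a^2+\gamma^{-1}b^2$) by $\gamma^{-3}u^2L_g^2|Z|^2+\gamma^{-1}|W|^2$, the second summand cancelling the $-\gamma^{-1}|W|^2$ slack. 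The remaining $|Z|^2$-coefficient $\tau\gamma(1{-}2\tau)^2+\gamma^{-3}u^2L_g^2-(1{-}4\tau)\gamma^{-1}u\kappa$ must be shown to be nonpositive; this is the main algebraic point and follows by a short case split on which term attains the minimum in \eqref{eq:tau1}, positivity of $\tau$ being ensured by \eqref{eq:cond_gamma}.

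With these estimates the drift of $r_l^2$ is at most $-2\tau\gamma r_l^2+C+\mathcal{I}_t$, where $C=\gamma^{-1}u(L_gR^2+8\1_{\{R>0\}})$ (the Itô term $8\gamma^{-1}u\rc^2$ is controlled when $R>0$, and vanishes when $R=0$ since then $D_{\mathcal{K}}=0$ and $\rc\equiv 0$ by \eqref{eq:rc_sc}). By \eqref{eq:mathcalR} one has $C=\tau\gamma\mathcal{R}$, and the hypothesis $\Delta(t)\ge D_{\mathcal{K}}$ combined with \eqref{eq:K}--\eqref{eq:D_K} forces $(Z_t,W_t)\notin\mathcal{K}$, hence $r_l(t)^2\ge\mathcal{R}$, so $C\le\tau\gamma r_l(t)^2$ and $\rmd r_l^2\le -\tau\gamma r_l^2\rmd t+\mathcal{I}_t\rmd t+\rmd M_t$. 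For $\mathcal{I}_t$ I would decompose the difference of the mean-field terms as $\int[\tilde{b}(\bar{X}_t,z)-\tilde{b}(\bar{X}_t',z)]\bar{\mu}_t^x(\rmd z)+\mathbb{E}[\tilde{b}(\bar{X}_t',\bar{X}_t)-\tilde{b}(\bar{X}_t',\bar{X}_t')]$ and apply \Cref{ass:tildeb} to obtain $|\mathcal{I}_t|\le\gamma^{-1}u|(1{-}2\tau)Z_t+2\gamma^{-1}W_t|\tilde{L}(|Z_t|+\mathbb{E}|Z_t|)$. Finally, converting via $\rmd r_l=(2r_l)^{-1}\rmd r_l^2-(8r_l^3)^{-1}\rmd[r_l^2]$ and discarding the nonpositive Itô correction yields \eqref{eq:contr_outside_nonl} with $c_1=\tau\gamma/2$. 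The main obstacle is the algebraic case analysis verifying nonpositivity of the $|Z|^2$-coefficient using \eqref{eq:tau1} and \eqref{eq:cond_gamma}; everything else is routine Itô and Young manipulation.
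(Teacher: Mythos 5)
Your proposal is correct and follows essentially the same route as the paper's proof: Itô's formula on the quadratic form $r_l^2$ with the matrices $A,B,C$, cancellation of the $KZ\cdot W$ cross terms, the sign condition \eqref{eq:strongconv} on $\{|Z_t|\ge R\}$ plus a Lipschitz bound on $\{|Z_t|<R\}$, Young's inequality for the $W\cdot(g(\bar X_t)-g(\bar X_t'))$ term, the key algebraic inequality coming from the definition \eqref{eq:tau1} of $\tau$ (the paper's \eqref{eq:estimatemu}), absorption of the constant $\tau\gamma\mathcal{R}$ via $r_l(t)^2\ge\mathcal{R}$ forced by $\Delta(t)\ge D_{\mathcal{K}}$ together with $\rc^2\le\1_{\{R>0\}}$, and concavity of the square root. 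The only cosmetic difference is that no case split on \eqref{eq:tau1} is needed, since $\tau\le 1/8$ and $\tau\le\gamma^{-2}u\kappa/2-\gamma^{-4}L_g^2u^2$ hold simultaneously by the minimum, exactly as the paper uses them.
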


\begin{proof} 
Let $A,B,C\in\mathbb{R}^{d\times d}$ be positive definite matrices given by
\begin{align} \label{eq:matrix_dist}
A=\gamma^{-2} u K +(1/2)(1-2\tau)^2\mathrm{Id}, \qquad B=(1-2\tau)\gamma^{-1}\mathrm{Id}, \qquad \text{and } \qquad C=\gamma^{-2}\mathrm{Id},
\end{align} 
where $\tau$ is given by \eqref{eq:tau1} and $\mathrm{Id}$ is the $d\times d$ identity matrix. 
By \eqref{eq:differenceproc_nonl} and  Ito's formula, it holds 
%
\begin{align*}
\rmd & (Z_t\cdot (A Z_t)+ Z_t\cdot (B W_t)+W_t\cdot (C W_t)) 
\\&\le 2(AZ_t)\cdot W_t \rmd t + \Big(W_t\cdot (B W_t)-\gamma (BZ_t)\cdot W_t- u (BZ_t)\cdot(KZ_t)+L_{g}u (1-2\tau)\gamma^{-1}|Z_t|^2\cdot \1_{\{|Z_t|<R\}}\Big)\rmd t
 \\ & +\Big(-2\gamma W_t\cdot (CW_t)-2u(C W_t)\cdot (K Z_t)+2\gamma^{-2} L_{g}u|W_t||Z_t|\Big)\rmd t
 +|B Z_t+2C W_t|\tilde{L}u(\mathbb{E}[|Z_t|]+|Z_t|)\rmd t
 \\ & +\gamma^{-2} 8\gamma u \rc(Z_t,W_t)^2\rmd t
 +\sqrt{8\gamma u} \rc(Z_t,W_t)(BZ_t +2C{W_t})\cdot e_t{e_t}^T\rmd B_t
\\ &\leq Z_t\cdot((-u BK+\gamma^{-1}u L_g^2 C)Z_t)\rmd t +Z_t\cdot(2A-\gamma B-2 u K C)W_t \rmd t
 +((1-2\tau)\gamma^{-1}-\gamma^{-1}) |W_t|^2\rmd t
 \\ &  +(1-2\tau)\gamma^{-1} u L_{g}|Z_t|^2 \1_{\{|Z_t|< R\}}\rmd t 
 + |(1-2\tau)\gamma^{-1} Z_t+2\gamma^{-2} W_t|\tilde{L} u(\mathbb{E}[|Z_t|]+|Z_t|)\rmd t 
\\ &  + 8\gamma^{-1} u (\rc(Z_t,W_t))^2\rmd t+ \sqrt{8\gamma u}\rc(Z_t,W_t)((1-2\tau)\gamma^{-1} Z_t+2\gamma^{-1} W_t)\cdot e_t{e_t}^T\rmd B_t
\\ & \le -2\tau \gamma (Z_t\cdot (AZ_t) +Z_t\cdot(B W_t)+W_t\cdot(CW_t))\rmd t
\\ &  +(1-2\tau)\gamma^{-1} u L_{g}|Z_t|^2 \1_{\{|Z_t|< R\}}\rmd t 
 + |(1-2\tau)\gamma^{-1} Z_t+2\gamma^{-2} W_t|\tilde{L} u (\mathbb{E}[|Z_t|]+|Z_t|)\rmd t 
\\ &  + 8\gamma^{-1}u (\rc(Z_t,W_t))^2\rmd t+ \sqrt{8\gamma u}\rc(Z_t,W_t)((1-2\tau)\gamma^{-1} Z_t+2\gamma^{-1} W_t)\cdot e_t{e_t}^T\rmd B_t,
\end{align*}
where we used \eqref{eq:tau1} in the last step. More precisely, the definition of $\tau$ implies for all $z\in\mathbb{R}^d$,
\begin{equation} \label{eq:estimatemu}
\begin{aligned}
 z\cdot((-(1-4\tau)\gamma^{-1} u K+\gamma^{-3}L_g^2 u^2 \mathrm{Id})z) & \leq  (-(1/2)\kappa u \gamma^{-1}+\gamma^{-3}L_g^2 u^2)|z|^2
\\ & \leq (-\tau\gamma)|z|^2\leq (-\tau\gamma(1-2\tau)^2)|z|^2.
\end{aligned}
\end{equation} 
Note that $r_l(t)^2=Z_t\cdot (A Z_t)+ Z_t\cdot (B W_t)+W_t\cdot (C W_t)$. 
Then,
\begin{align*}
\rmd r_l(t)^2&\leq -2\tau\gamma r_l(t)^2\rmd t  +\gamma^{-1}(1-2\tau) L_{g}u |Z_t|^2 \1_{\{|Z_t|< R\}}\rmd t
+ \gamma^{-1}|(1-2\tau)Z_t+2\gamma^{-1}W_t|\tilde{L} u(\mathbb{E}[|Z_t|]+|Z_t|)\rmd t
\\ & +8\gamma^{-1}u \rc(Z_t,W_t)^2 \rmd t
 + \sqrt{8\gamma^{-1}u}\rc(Z_t,W_t)((1-2\tau)Z_t+2\gamma^{-1}W_t)\cdot e_t{e_t}^T\rmd B_t.
\end{align*}
Since $\Delta(t)\geq D_{\mathcal{K}} $, it holds 
$r_l(t)^2\geq \mathcal{R}$ by \eqref{eq:D_K} and \eqref{eq:K}. By \eqref{eq:rc_sc}, $\rc(Z_t,W_t)^2\le \1_{\{R>0\}}$, and hence, by \eqref{eq:mathcalR}
\begin{align*}
-\tau\gamma r_l(t)^2  +\gamma^{-1}(1-2\tau) L_{g} u|Z_t|^2 \1_{\{|Z_t|< R\}}&+8\gamma^{-1}u\rc(Z_t,W_t)^2 
\leq -\tau\gamma \mathcal{R}+L_g u R^2\gamma^{-1}+8\gamma^{-1}u \1_{\{R>0\}}\leq 0.
\end{align*}
We obtain by Ito's formula and since the second derivative of the square root is negative,
\begin{align*}
\rmd r_l(t)\leq (2r_l(t))^{-1}\rmd r_l(t)^2
&\leq -c_1 r_l(t) \rmd t  
  + \gamma^{-1}|(1-2\tau)Z_t+2\gamma^{-1}W_t|(2r_l(t))^{-1}\tilde{L} u(\mathbb{E}[|Z_t|]+|Z_t|)\rmd t 
\\ & + \sqrt{8\gamma^{-1}}\rc(Z_t,W_t)(2r_l(t))^{-1}((1-2\tau)Z_t+2\gamma^{-1}W_t)\cdot e_t{e_t}^T\rmd B_t,
\end{align*}
which concludes the proof.
\end{proof}

\begin{lemma} \label{lem:contr_inside_nonl}
Suppose \Cref{ass:b} and \Cref{ass:tildeb} hold. Fix $\xi>0$.
Let $((\bar{X}_s,\bar{Y}_s),(\bar{X}_s',\bar{Y}_s'))_{s\geq 0}$ be a solution to \eqref{eq:KFP_coupling_nonl}. 
Let $r_s$ be given by \eqref{eq:r_s} with $\alpha$ given in \eqref{eq:alpha1}. Then for $t\geq 0$ with $\Delta(t)< D_{\mathcal{K}}$, it holds
\begin{align*}
\rmd f(r_s(t))&\leq -c_2 f(r_s(t))\rmd t + \gamma^{-1}\tilde{L}u (\mathbb{E}[|Z_t|]+|Z_t|) \rmd t -\frac{\gamma\alpha}{4}f'(R_1)|Z_t|\rmd t
 +(1+\alpha)\xi\gamma\rmd t +\rmd M_t,
\end{align*}
where $f$ is given in \eqref{eq:f}, $(M_t)_{t\geq 0}$ is a martingale and $c_2$ is given by 
\begin{equation} \label{eq:c_2} 
c_2:= \min\Big(\frac{2}{\gamma \int_0^{R_1}\Phi(s)\phi(s)^{-1}ds}, \frac{\gamma}{8}\frac{R_1\phi(R_1)}{\Phi(R_1)}\Big). 
\end{equation}
\end{lemma}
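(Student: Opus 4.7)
The plan is to apply Ito's formula to $f(r_s(t))$ and exploit the reflective coupling in the region $|Q_t|\ge \xi$ to absorb the worst-case drift of $r_s$ through the second-order Ito correction.

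First, I derive the evolution of $|Z_t|$ and $|Q_t|$. Since $\rmd Z_t = W_t\,\rmd t = \gamma(Q_t - Z_t)\,\rmd t$ (using $W=\gamma(Q-Z)$), a smooth mollification of $|\cdot|$ near the origin yields $\rmd |Z_t|\le \gamma(|Q_t|-|Z_t|)\,\rmd t$. For $|Q_t|$, the SDE \eqref{eq:differenceproc_nonl} gives, after projecting on $e_t$ and noting that the Ito correction from the purely radial reflection noise vanishes,
\begin{equation*}
\rmd |Q_t|\le \gamma^{-1}u(L_K+L_g)|Z_t|\,\rmd t + \gamma^{-1}u\tilde{L}(|Z_t|+\mathbb{E}[|Z_t|])\,\rmd t + \sqrt{8\gamma^{-1}u}\,\rc(Z_t,W_t)\,e_t\cdot \rmd B_t^{\rc},
\end{equation*}
so $\rmd\langle|Q|\rangle_t = 8\gamma^{-1}u\,\rc(Z_t,W_t)^2\,\rmd t$. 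With $\alpha = 2(L_K+L_g)u\gamma^{-2}$ the drift terms $-\alpha\gamma|Z_t|$ and $\gamma^{-1}u(L_K+L_g)|Z_t|$ combine into $-(\alpha\gamma/2)|Z_t|$, yielding
\begin{equation*}
\rmd r_s(t)\le \bigl[\alpha\gamma|Q_t| - (\alpha\gamma/2)|Z_t| + \gamma^{-1}u\tilde{L}(|Z_t|+\mathbb{E}[|Z_t|])\bigr]\rmd t + \sqrt{8\gamma^{-1}u}\,\rc\,e_t\cdot \rmd B_t^{\rc}.
\end{equation*}

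Applying Ito to $f(r_s(t))$ and using $|Q_t|\le r_s(t)$ in the drift multiplied by $f'$ gives
\begin{equation*}
\rmd f(r_s)\le \bigl[f'(r_s)\alpha\gamma r_s + 4\gamma^{-1}u\,\rc^2 f''(r_s)\bigr]\rmd t - \tfrac{\alpha\gamma}{2}f'(r_s)|Z_t|\,\rmd t + f'(r_s)\gamma^{-1}u\tilde{L}(|Z_t|+\mathbb{E}[|Z_t|])\,\rmd t + \rmd M_t,
\end{equation*}
with $(M_t)_{t\ge 0}$ the martingale produced by the noise. In the reflective region $|Q_t|\ge \xi$, $\rc=1$ by \eqref{eq:rc_sc}; by the design $\phi'(r)/\phi(r)=-\alpha\gamma^2 r/(4u)$ for $r\le R_1$, the identity $4\gamma^{-1}u\phi'(r)=-\alpha\gamma r\phi(r)$ exactly cancels the worst-case drift $\alpha\gamma r\phi\psi$, leaving $4\gamma^{-1}u\phi(r)\psi'(r) = -2\hat c\Phi(r)\le -2\hat c f(r)$ (using $\Phi\ge f$ from \eqref{eq:f_property}). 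This furnishes the first argument of the minimum in the definition of $c_2$. For $r_s>R_1$, $f$ becomes affine with slope $\phi(R_1)/2$; combining the linear growth of $f$ with the inequality $r_s\le (1+\alpha^{-1})|Z_t|+\xi$ valid in this regime and the unused portion of $-(\alpha\gamma/2)f'(r_s)|Z_t|$ produces the second argument of the $\min$ in \eqref{eq:c_2}. The extra term $-(\alpha\gamma/4)f'(R_1)|Z_t|$ stated in the lemma is isolated by splitting the $Z$-contraction in two equal halves and using the monotonicity of $f'$ to invoke $f'(r_s)\ge f'(R_1)$ on one of the halves.

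In the buffer region $|Q_t|<\xi$ the coefficient $\rc$ may be strictly less than one and the Eberle-type cancellation is not available. There I bound $\alpha\gamma|Q_t|\le \alpha\gamma\xi$ and pair the residual $-(\alpha\gamma/2)|Z_t|$ with $r_s \le \alpha|Z_t|+\xi$ to retain the contraction at the cost of the announced error $(1+\alpha)\gamma\xi$, using $f'(r_s)\le 1$ from \eqref{eq:f_property}. The interaction term is transported verbatim through the same bound. The principal technical obstacle is the Eberle-style verification $f'(r)\alpha\gamma r + 4\gamma^{-1}u f''(r)\le -c_2 f(r)$ on $[0,R_1]$ together with its extension to $r>R_1$, and the careful bookkeeping of the $\xi$-regularization around the singular set $\{Q_t=0\}$ where the reflection direction $e_t$ is undefined.
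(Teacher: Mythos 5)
Your overall route is the same as the paper's: bound $\rmd |Z_t|$ and $\rmd |Q_t|$ separately (no It\^o correction for $|Q_t|$), combine them into $\rmd r_s(t)\le[-(\alpha\gamma/2)|Z_t|+\alpha\gamma|Q_t|+\gamma^{-1}u\tilde{L}(\mathbb{E}[|Z_t|]+|Z_t|)]\rmd t+\text{noise}$, apply It\^o's formula with $f$, and distinguish $|Q_t|\ge\xi$ (reflection) from $|Q_t|<\xi$. Your reflection-region computation, including the explicit check that $\alpha\gamma r f'(r)+4\gamma^{-1}u f''(r)\le-2\hat{c}f(r)$ on $[0,R_1)$, is correct and is exactly \eqref{eq:f_property2}; extracting $-\tfrac{\alpha\gamma}{4}f'(R_1)|Z_t|$ by halving the $Z$-term and using $f'(r_s)\ge f'(R_1)$ is also fine there.

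The gap is in the buffer region $|Q_t|<\xi$ and in where you locate the second argument of the minimum defining $c_2$. Under the hypothesis $\Delta(t)<D_{\mathcal{K}}$ one has $r_s(t)\le R_1$ by the definition \eqref{eq:R_1}, so the regime $r_s>R_1$ that you invoke (together with the inequality $r_s\le(1+\alpha^{-1})|Z_t|+\xi$, which is not the relevant one) is empty and cannot be the source of $\tfrac{\gamma}{8}\tfrac{R_1\phi(R_1)}{\Phi(R_1)}$. That factor must be produced precisely in the case $|Q_t|\le\xi$: there $\alpha|Z_t|\ge r_s(t)-\xi$ gives the drift bound $f'(r_s)\bigl(-\tfrac{\gamma}{2}r_s+(1+\alpha)\gamma\xi+\gamma^{-1}u\tilde{L}(\ldots)\bigr)$, and one must split $-\tfrac{\gamma}{2}r_s f'(r_s)$ into two halves, turning one half into $-\tfrac{\gamma}{8}\tfrac{R_1\phi(R_1)}{\Phi(R_1)}f(r_s)$ via $f'(r)\ge\phi(r)/2$, $f\le\Phi$ and $\inf_{r\le R_1}r\phi(r)/\Phi(r)=R_1\phi(R_1)/\Phi(R_1)$, and the other half into $-\tfrac{\gamma\alpha}{4}f'(R_1)|Z_t|$ via $r_s\ge\alpha|Z_t|$ and $f'(r_s)\ge f'(R_1)$. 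As written, your buffer-region argument yields neither a $-c\,f(r_s)$ term nor the $-\tfrac{\alpha\gamma}{4}f'(R_1)|Z_t|$ term (the bound $f'(r_s)\le1$ only controls the $\xi$- and interaction errors and goes the wrong way for the contraction), so the claimed inequality is not established in that case; the rest of the bookkeeping (the $(1+\alpha)\gamma\xi$ error, the interaction term, the martingale part) is fine.
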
 

\begin{proof}
The proof is an adaptation of the proof of \cite[Lemma 3.1]{EbGuZi19}. First, we note that, $(Z_t)_{t\geq 0}$ given in \eqref{eq:differenceproc_nonl} is  almost surely continuously differentiable with derivative $\rmd Z_t/\rmd t=-\gamma Z_t+\gamma Q_t$ and hence $t\to|Z_t|$ is almost surely absolutely continuous with 
\begin{align*}
&\frac{\rmd}{\rmd t}|Z_t|=\frac{Z_t}{|Z_t|}\cdot(-\gamma Z_t +\gamma Q_t) &&\text{ for a.e. }t \text{ such that } Z_t\neq 0 \text{ and }
\\ & \frac{\rmd }{\rmd t} |Z_t| \leq \gamma |Q_t| &&\text{ for a.e. }t\text{ such that } Z_t = 0.
\end{align*}
and therefore
\begin{align} \label{eq:Z_t}
\frac{\rmd}{\rmd t}|Z_t|\leq -\gamma |Z_t|+\gamma |Q_t| \text{ for a.e. } t\geq 0.
\end{align}
By Ito's formula and by \Cref{ass:b} and \Cref{ass:tildeb}, we obtain for $|Q_t|$,
\begin{align*}
\rmd & |Q_t|
\\ &= \gamma^{-1} u{e_t}\cdot\Big(b(\bar{X}_t)-b(\bar{X}_t')+\int_{\mathbb{R}^d}\tilde{b}(\bar{X}_t,z)\bar{\mu}_t^x (\rmd z)-\int_{\mathbb{R}^d}\tilde{b}(\bar{X}_t',\tilde{z})\bar{\nu}_t^x (\rmd \tilde{z})\Big)\rmd t
 +\sqrt{8\gamma^{-1} u}\rc(Z_t,W_t){e_t}^T\rmd B_t
\\ & \leq \gamma^{-1} u(L_K+L_g+\tilde{L})|Z_t|\rmd t+\gamma^{-1}\tilde{L}u \mathbb{E}[|Z_t|]\rmd t +\sqrt{8\gamma^{-1}u}\rc(Z_t,W_t){e_t}^T\rmd B_t^{\rc}.
\end{align*} 
Note that there is no Ito correction term, since $\partial^2_{q/|q|}|q|=0$ for $q\neq 0$ and $\rc=0$ for $Q_t=0$. Combining this bound with \eqref{eq:Z_t} yields for $r_s(t)$,
\begin{align*}
\rmd r_s(t)&\leq \Big(((L_K+L_g)u\gamma^{-2}-\alpha)\gamma|Z_t|+\alpha\gamma|Q_t|+\gamma^{-1}\tilde{L}u(\mathbb{E}[|Z_t|]+|Z_t|)\Big) \rmd t
 +\sqrt{8\gamma^{-1}u}\mathrm{rc}(Z_t,W_t){e_t}^T\rmd B_t^{\rc}.
\end{align*}
By Ito's formula,
\begin{align*}
\rmd f(r_s(t))&\le f'(r_s(t))\Big(((L_K+L_g)u\gamma^{-2}-\alpha)\gamma|Z_t|+\alpha\gamma|Q_t|+\gamma^{-1}\tilde{L}u(\mathbb{E}[|Z_t|]+|Z_t|)\Big) \rmd t
\\ & +f'(r_s(t))\sqrt{8\gamma^{-1}u}\mathrm{rc}(Z_t,W_t){e_t}^T \rmd B_t^{\mathrm{rc}}+f''(r_s(t))4\gamma^{-1}u\mathrm{rc}(Z_t,W_t)^2\rmd t.
\end{align*}
\textit{Case 1:} Consider $\Delta(t)<D_{\mathcal{K}}$ and $|Q_t|>\xi$, then $\rc(Z_t,W_t)=1$ and $r_s(t)< R_1$. Hence, we obtain
\begin{align*}
 \rmd f(r_s(t))&\leq f'(r_s(t))\alpha\gamma r_s(t) \rmd t+f''(r_s(t))4\gamma^{-1}u \rmd t +\gamma^{-1}\tilde{L} u(\mathbb{E}[|Z_t|]+|Z_t|)\rmd t-\frac{\alpha\gamma}{2}|Z_t|f'(r_s(t))\rmd t+\rmd M_t
 \\ &\leq -2\hat{c}f(r_s(t))\rmd t+\gamma^{-1}\tilde{L} u(\mathbb{E}[|Z_t|]+|Z_t|)\rmd t-\frac{\alpha\gamma}{2}|Z_t|f'(R_1)\rmd t+\rmd M_t
 \\ &\leq -c_2f(r_s(t))\rmd t+\gamma^{-1}\tilde{L} u(\mathbb{E}[|Z_t|]+|Z_t|)\rmd t-\frac{\alpha\gamma}{2}|Z_t|f'(R_1)\rmd t+\rmd M_t,
\end{align*} 
where $(M_t)_{t\ge 0}$ is a martingale and $\hat{c}$ is given in \eqref{eq:f_definitions}. Note that the second step holds since by \eqref{eq:f} and \eqref{eq:f_property},
\begin{align} \label{eq:f_property2}
f'(r)\alpha\gamma r + f''(r)4 \gamma^{-1} u\leq -2\hat{c} f(r) \qquad \text{for all } r\in [0,R_1).
\end{align} 
\textit{Case 2:} Consider $\Delta(t)<D_{\mathcal{K}}$ and $|Q_t|\leq \xi$, then $\alpha|Z_t|= r_s(t)-|Q_t|\geq r_s(t)-\xi$.
We note that
\begin{align*}
((L_K+L_g) u\gamma^{-2}-\alpha)|Z_t|+\alpha|Q_t|\leq -\frac{1}{2}r_s(t)+(1+\alpha)\xi.
\end{align*}
Since the second derivative of $f$ is negative and $\psi(s)\in[1/2,1]$, it holds
\begin{equation} \label{eq:nonl_inside_2case}
\begin{aligned}
 \rmd f(r_s(t))&\leq -\frac{ \gamma}{2}r_s(t)f'(r_s(t))\rmd t+ (1+\alpha)\gamma\xi \rmd t+\gamma^{-1} u\tilde{L}(\mathbb{E}[|Z_t|]+|Z_t|)\rmd t+\rmd M_t
 \\ & \leq  -\frac{\gamma}{8}\inf_{r\leq R_1}\frac{r\phi(r)}{\Phi(r)}f(r_s(t))\rmd t-\frac{ \gamma}{4}f'(R_1)\alpha|Z_t|\rmd t+ (1+\alpha)\gamma\xi \rmd t
 +\gamma^{-1}\tilde{L} u(\mathbb{E}[|Z_t|]+|Z_t|)\rmd t+\rmd M_t
  \\ & \leq  -\frac{\gamma}{8}\frac{R_1\phi(R_1)}{\Phi(R_1)}f(r_s(t))\rmd t -\frac{\gamma\alpha}{4}f'(R_1)|Z_t|\rmd t+ (1+\alpha)\gamma\xi \rmd t
+\gamma^{-1}\tilde{L} u(\mathbb{E}[|Z_t|]+|Z_t|)\rmd t+\rmd M_t. 
\end{aligned}
\end{equation}
Combining the two cases, we obtain the result with $c_2$ given in \eqref{eq:c_2}.
\end{proof}

\begin{proof}[Proof of \Cref{thm:propofchaos}]
To prove contraction, we consider the coupling $((\bar{X}_t,\bar{Y}_t),(\bar{X}_t',\bar{Y}_t'))_{t\geq 0}$ given in \eqref{eq:KFP_coupling_nonl} and combine the results of \Cref{lem:contr_outside_nonl} and \Cref{lem:contr_inside_nonl}. We abbreviate $\rho(t)=f((\Delta(t)\wedge D_{\mathcal{K}})+\epsilon r_l(t))$.
We distinguish two cases: \\
\textit{Case 1:} Consider $\Delta(t)< D_{\mathcal{K}}$. Then
$r_s(t)\leq R_1$ and $\rho(t)=f(r_s(t))$.
By \Cref{lem:contr_inside_nonl}, it holds for $\xi>0$
\begin{align}\label{eq:proof1_7}
\rmd\rho(t)=\rmd f(r_s(t))&\leq -c_2 f(r_s(t))\rmd t + \gamma^{-1}\tilde{L} u(\mathbb{E}[|Z_t|]+|Z_t|) \rmd t- \frac{\alpha\gamma}{4}f'(R_1)|Z_t|\rmd t+(1+\alpha)\gamma\xi \rmd t + \rmd M_t \nonumber
\\ & \leq -c_2 f(r_s(t))\rmd t + \gamma^{-1}\tilde{L} u \mathbb{E}[|Z_t|] \rmd t- \frac{\alpha\gamma}{8}f'(R_1)|Z_t|\rmd t+(1+\alpha)\gamma\xi \rmd t + \rmd M_t,
\end{align}
where $c_2$ is given by \eqref{eq:c_2} and $(M_t)_{t\geq 0}$ is a martingale. The second step holds by \eqref{eq:condition_tildeL}. \\ 
\textit{Case 2:} Consider $\Delta(t)\geq D_{\mathcal{K}}$.
We obtain by \Cref{lem:contr_outside_nonl},
\begin{align*}
\rmd r_l(t)&\leq -c_1 r_l(t) \rmd t
 +  \frac{|(1-2\tau)Z_t+2\gamma^{-1} W_t|}{2\gamma r_l(t)}\tilde{L} u(\mathbb{E}[|Z_t|]+|Z_t|) \rmd t
\\ &    + \sqrt{8\gamma^{-1} u}\rc(Z_t,W_t)\frac{(1-2\tau)Z_t+2\gamma^{-1}W_t}{2r_l(t)}\cdot e_t{e_t}^T\rmd B_t,
\end{align*}
where $c_1$ is given in \Cref{lem:contr_outside_nonl}. 
Note that $\frac{\rmd}{\rmd x} f(D_{\mathcal{K}}+\epsilon x)=\epsilon f'(D_{\mathcal{K}}+\epsilon x)$. Further, since $f(D_{\mathcal{K}}+\epsilon x)$ is a concave function, $\frac{\rmd^2}{\rmd x^2} f(D_{\mathcal{K}}+\epsilon x)$ is negative. 
By Ito's formula, we obtain
\begin{equation}\label{eq:proof_nonl_conf_2nd}
\begin{aligned} 
\rmd \rho(t)&= \rmd f(D_{\mathcal{K}}+\epsilon r_l(t))
\\ & \leq \epsilon f'(D_{\mathcal{K}}+\epsilon r_l(t))\Big(-c_1 r_l(t)
 + \frac{|(1-2\tau)Z_t+2\gamma^{-1}W_t|}{2\gamma r_l(t)}\tilde{L}u (\mathbb{E}[|Z_t|]+|Z_t|) \Big)\rmd t+\rmd \tilde{M}_t,
\end{aligned} 
\end{equation}
where $\tilde{M}_t$ is a martingale given by
\begin{align} \label{eq:proof_nonl_conf_mart}
\tilde{M}_t=\int_0^t \frac{\epsilon f'(D_{\mathcal{K}}+\epsilon r_l(s))}{2r_l(s)}\sqrt{8\gamma^{-1}u}\rc(Z_s,W_s) ((1-2\tau)Z_s+2\gamma^{-1}W_s)\cdot e_s{e_s}^T\rmd B_s.
\end{align}
We split the first term of \eqref{eq:proof_nonl_conf_2nd} and bound each part applying \eqref{eq:f_property},
\begin{align}\label{eq:proof1_1a}
-\frac{\epsilon f'(D_{\mathcal{K}}+\epsilon r_l(t))}{2}c_1 r_l(t)  & \leq -\Big\{\inf_{q\geq 0}\frac{f'(q)q}{f(q)}\Big\}\frac{\epsilon c_1 r_l(t)}{2(D_{\mathcal{K}}+\epsilon r_l(t))}\rho(t)
 \leq -f'(R_1)\frac{\epsilon c_1 r_l(t)}{2(D_{\mathcal{K}}+\epsilon r_l(t))}\rho(t) 
\end{align}
and
\begin{align}
-\frac{\epsilon f'(D_{\mathcal{K}}+\epsilon r_l(t))}{2}c_1 r_l(t)  & \leq -f'(R_1)\frac{\epsilon c_1}{2}r_l(t). \label{eq:proof1_1b}
\end{align}
We note that since $\Delta(t)>D_{\mathcal{K}}$ it holds,
\begin{align}
\frac{ r_l(t)}{D_{\mathcal{K}}+\epsilon r_l(t)}&\geq \frac{r_l(t)}{r_s(t)}\geq  \mathcal{E}, \label{eq:proof1_2}
\end{align} 
where $\mathcal{E}$ is given in \eqref{eq:mathcalE}.
Hence, we obtain for the first term of \eqref{eq:proof_nonl_conf_2nd}, by \eqref{eq:proof1_1a}, \eqref{eq:proof1_1b} and \eqref{eq:proof1_2}
\begin{align}\label{eq:proof1_1}
-\epsilon f'(D_{\mathcal{K}}+\epsilon r_l(t))c_1 r_l(t)^2 \leq -f'(R_1)\frac{c_1\epsilon\mathcal{E}}{2}\rho(t)-f'(R_1)\frac{c_1\epsilon}{2}r_l(t).
\end{align}
For the second term of \eqref{eq:proof_nonl_conf_2nd}, we note
\begin{align}
\epsilon f'(D_{\mathcal{K}}+\epsilon r_l(t))   \frac{|(1-2\tau)Z_t+2\gamma^{-1} W_t|}{2\gamma r_l(t)}
&\le \frac{\epsilon}{2\gamma} \sqrt{\frac{(1-2\tau)^2|Z_t|^2+4(1-2\tau)\gamma^{-1}Z_t\cdot W_t+4\gamma^{-2}|W_t|^2}{(1/2)(1-2\tau)^2|Z_t|^2+(1-2\tau)\gamma^{-1}Z_t\cdot W_t+\gamma^{-2}|W_t|^2}}
\le \frac{\epsilon}{\gamma}. \label{eq:proof1_3}
\end{align}
Combining \eqref{eq:proof1_1} and \eqref{eq:proof1_3} yields,
\begin{align}
\rmd  \rho(t)&\leq -f'(R_1)\frac{c_1\epsilon\mathcal{E}}{2} \rho(t)\rmd t-f'(R_1)\frac{c_1\epsilon}{2}r_l(t)\rmd t  +\epsilon\gamma^{-1}\tilde{L}u(\mathbb{E}[|Z_t|]+|Z_t|)\rmd t+\rmd \tilde{M}_t. \nonumber
\\ & \leq -f'(R_1)\frac{c_1\epsilon\mathcal{E}}{2} \rho(t)\rmd t-f'(R_1)\frac{c_1\epsilon}{2}\sqrt{\kappa u\gamma^{-2}}|Z_t|\rmd t  +\frac{1}{2}\gamma^{-1}\tilde{L}u(\mathbb{E}[|Z_t|]+|Z_t|)\rmd t  +\rmd \tilde{M}_t, \label{eq:proof1_4}
\end{align}
where $r_l(t)\geq \sqrt{\kappa u\gamma^{-2}}|Z_t|$ and $2\epsilon\leq 1$ are applied and where $(\tilde{M}_t)_{t\geq 0}$ is given in \eqref{eq:proof_nonl_conf_mart}.

Combining \eqref{eq:proof1_7} and \eqref{eq:proof1_4}, taking expectation and $\xi\to0$, yields
\begin{align*}
\frac{\rmd }{\rmd t} \mathbb{E}[ \rho(t)]&\leq -\min\Big(c_2,
f'(R_1)\frac{c_1\epsilon \mathcal{E}}{2}\Big)\mathbb{E}[\rho(t)]
 -\min\Big(f'(R_1)\frac{\alpha\gamma}{8},f'(R_1)\frac{c_1\epsilon}{2}\sqrt{\kappa u\gamma^{-2}}\Big)\mathbb{E}[|Z_t|]+\gamma^{-1}\tilde{L} u\mathbb{E}[|Z_t|]
\\ & \leq  -\min\Big(c_2,
f'(R_1)\frac{c_1\epsilon \mathcal{E}}{2}\Big)\mathbb{E}[\rho(t)],
\end{align*}
where we used \eqref{eq:condition_tildeL} and \eqref{eq:alpha1} in the second step. 
By applying Gr\"{o}nwall's inequality, we obtain
\begin{align*}
\mathcal{W}_{1,\rho}(\bar{\mu}_t, \bar{\nu}_t)\leq \mathbb{E}[\rho(t)]&\leq e^{-c_3t}\mathbb{E}[\rho(0)]
\end{align*}
with
\begin{align} \label{eq:c}
c_3=\min\Big(\frac{2}{\gamma \int_0^{R_1}\Phi(s)\phi(s)^{-1}ds}, \frac{\gamma}{8}\frac{R_1\phi(R_1)}{\Phi(R_1)} ,
f'(R_1)\gamma\tau\frac{\epsilon \mathcal{E}}{4}\Big).
\end{align}
The term $\epsilon\mathcal{E}$ is bounded from below by $E$ given in \eqref{eq:E}.
For the first two arguments in the minimum we note that
\begin{align}
\int_0^{R_1}\int_0^s & \exp\Big(-\frac{\alpha\gamma^2}{4 u}\frac{r^2}{2}\Big)\rmd r \exp\Big(\frac{\alpha\gamma^2}{4 u}\frac{s^2}{2}\Big) \rmd s  \le \sqrt{\frac{\pi}{2}}\Big(\frac{\alpha \gamma^2}{4 u}\Big)^{-1/2} \int_0^{R_1}  \exp\Big(\frac{\alpha\gamma^2}{4 u}\frac{s^2}{2}\Big) \rmd s  \nonumber
\\ & \le \sqrt{\frac{\pi}{2}}\Big(\frac{\alpha \gamma^2}{4 u}\Big)^{-1/2} 2 \Big(\frac{\alpha \gamma^2}{4 u}R_1\Big)^{-1} \exp\Big(\frac{\alpha\gamma^2}{4 u}\frac{R_1^2}{2}\Big)\le 4\Big(\frac{\alpha \gamma^2}{4 u}\Big)^{-1} \Big(\frac{\alpha \gamma^2}{4 u}\frac{R_1^2}{2}\Big)^{-1/2} \exp\Big(\frac{\alpha\gamma^2}{4 u}\frac{R_1^2}{2}\Big) \label{eq:c_simplify1}
\end{align}
since $\int_0^x \exp(r^2/2)\rmd r\leq 2x^{-1}\exp(x^2/2)$, and 
\begin{align}
\frac{R_1 \phi(R_1)}{\Phi(R_1)}\ge \frac{R_1\exp(-\frac{\alpha\gamma^2}{4 u}\frac{R_1^2}{2})}{\sqrt{\frac{\pi}{2}}(\frac{\alpha\gamma^2}{4 u})^{-1/2}}=\frac{2}{\sqrt{\pi}}\Big(\frac{\alpha \gamma^2}{4 u}\frac{R_1^2}{2}\Big)^{1/2}\exp\Big(-\frac{\alpha\gamma^2}{4 u}\frac{R_1^2}{2}\Big) \ge \Big(\frac{\alpha \gamma^2}{4 u}\frac{R_1^2}{2}\Big)^{1/2}\exp\Big(-\frac{\alpha\gamma^2}{4 u}\frac{R_1^2}{2}\Big). \label{eq:c_simplify2}
\end{align}
Hence, $\mathcal{W}_{1,\rho}(\bar{\mu}_t, \bar{\nu}_t)\leq \mathbb{E}[\rho(t)]\leq e^{-\bar{c}t}\mathbb{E}[\rho(0)]$
with $c$ given by
\begin{align}
&\bar{c}=\gamma \exp(-\Lambda)\min\Big(\frac{(L_K+L_g) u\gamma^{-2}}{4}\Lambda^{1/2},\frac{1}{8}\Lambda^{1/2},\frac{\tau E}{4}\Big) \label{eq:c_thm_nonl}
\end{align}
with $\Lambda$, $\tau$ and $E$ given in \eqref{eq:Lambda}, \eqref{eq:tau1} and \eqref{eq:E}.
Taking the infimum over all couplings $\omega\in\Pi(\bar{\mu}_0,\bar{\nu}_0)$ concludes the proof of the first result.

By \eqref{eq:rho_equivalence}, the second result holds with $M_1=\mathbf{C}_2/\mathbf{C}_1$ given by \eqref{eq:M1}. 
\end{proof}

\begin{proof}[Proof of \Cref{thm:contraction_underdamped}]
\Cref{thm:contraction_underdamped} forms a special case of \Cref{thm:contraction_nonlinear_conf}. We obtain analogously to \Cref{lem:contr_outside_nonl} for $\Delta(t)\geq D_{\mathcal{K}}$,
\begin{align*}
\rmd r_l(t)\leq -c_1 r_l(t)\rmd t +\sqrt{8\gamma^{-1} u}\mathrm{rc}(Z_t,W_t)(r_l(t)^{-1}/2)((1-2\tau)Z_t+2\gamma^{-1}W_t)\cdot e_t{e_t}^T \rmd B_t,
\end{align*}
where $c_1=\tau\gamma/2$ with $\tau$ given in \eqref{eq:tau1}. 
Similarly as in \Cref{lem:contr_inside_nonl}, we get for $\Delta(t)<D_{\mathcal{K}}$ using $\tilde{L}=0$
\begin{align*}
\rmd f(r_s(t))\leq - c_2 f(r_s(t))\rmd t +(1+\alpha)\xi\gamma\rmd t +\rmd M_t ,
\end{align*}
where $M_t$ is a martingale, $\alpha$ is defined in \eqref{eq:alpha1}, $f$ is defined in \eqref{eq:f} and $c_2$ is given in \eqref{eq:c_2}. 
Combining the two local contraction results as in the proof of \Cref{thm:contraction_nonlinear_conf} gives the desired result with contraction rate 
\begin{align} \label{eq:c_underdamped}
c=\min\Big(\frac{2}{u^{-1}\gamma \int_0^{R_1}\Phi(s)\phi(s)^{-1}ds}, \frac{\gamma}{8}\frac{R_1\phi(R_1)}{\Phi(R_1)} ,
f'(R_1)\gamma\tau\frac{\epsilon\mathcal{E}}{2}\Big).
\end{align}
Note that the last two terms in the minimum differ by a factor of $2$ from the last two terms in \eqref{eq:c}, as the first terms in \eqref{eq:proof_nonl_conf_2nd} and \eqref{eq:nonl_inside_2case} are not split up to compensate for the interaction term as in the nonlinear term.
\end{proof}

\subsection{Proof of Section~\ref{sec:unif_prop}} \label{sec:proof_confined}

Fix $N\in\mathbb{N}$.
To show propagation in chaos in \Cref{thm:propofchaos} we construct in the same line as in \Cref{sec:coupling_confined} a coupling between a solution to \eqref{eq:KFP_meanfield_nongradient} and $N$ copies of solutions to \eqref{eq:KFP}.
We fix a positive constant $\xi$, which we take in the end to the limit $\xi\to0$. Let $\{(B^{i,\rc})_{t\geq 0}:i=1,\ldots,N\}$ and $\{(B^{i,\sc})_{t\geq 0}:i=1,\ldots,N\}$ be $2N$ independent $d$-dimensional Brownian motions and let $\mu_0$ and $\bar{\mu}_0$ be two probability measures on $\mathbb{R}^{2d}$. The coupling $(\{(\bar{X}_t^i,\bar{Y}_t^i),(X_t^{i},Y_t^{i})\}_{i=1}^N)_{t\ge 0}$ 
is a solution to the SDE on $\mathbb{R}^{2Nd}\times\mathbb{R}^{2Nd}$ given by
\begin{equation} \label{eq:KFP_coupling}
\begin{aligned}
&\begin{cases} 
\rmd \bar{X}_t^i&=\bar{Y}_t^i \rmd t
\\  \rmd \bar{Y}_t^i&=(-\gamma\bar{Y}_t^i+ u b(\bar{X}_t^i)+ u \int_{\mathbb{R}^d}\tilde{b}(\bar{X}_t^i,z)\bar{\mu}_t^x (\rmd z))\rmd t+\sqrt{2\gamma u}\sc(Z_t^i,W_t^i)\rmd B_t^{i,\sc} +\sqrt{2\gamma u}\rc(Z_t^i,W_t^i)\rmd B_t^{i,\rc}
\end{cases}
\\
&\begin{cases}
\rmd X_t^{i}&=Y_t^{i} \rmd t
\\  \rmd Y_t^{i}&=(-\gamma Y_t^{i}+u b(X_t^{i})+u N^{-1}\sum_{j=1}^N\tilde{b}(X_t^{i},X_t^{j}))\rmd t+\sqrt{2\gamma u}\sc(Z_t^{i},W_t^i)\rmd B_t^{i,\sc}
\\ & +\sqrt{2\gamma u}\rc(Z_t^i,W_t^i)(\Id-2e_t^i{e_t^i}^T)\rmd B_t^{i,\rc} 
\end{cases}
\\ &(\bar{X}_0^i,\bar{Y}_0^i)\sim \bar{\mu}_0, \quad ({X}_0^i,{Y}_0^i)\sim {\mu}_0
\end{aligned}
\end{equation}
for $i=1,...,N$, where $\bar{\mu}_t^x=\Law(\bar{X}_t^i)$ for all $i$. Further, $Z_t^i=\bar{X}_t^i-X_t^{i}$, $W_t^i=\bar{Y}_t^i-Y_t^{i}$, $Q_t^i=Z_t^i+\gamma^{-1}W_t^i$, and
$e_t^i=Q_t^i/|Q_t^i|$ if $Q_t^i\neq 0$ and $e_t^i=0$ if $Q_t^i=0$.
As in \Cref{sec:coupling_confined}, the functions $\rc, \sc:\mathbb{R}^{2d}\to[0,1)$ are Lipschitz continuous and satisfy $\rc^2+\sc^2\equiv 1$ and \eqref{eq:rc_sc}.
We note that by Levy's characterization, for any solution of \eqref{eq:KFP_coupling} the processes 
\begin{align*}
B_t^i&:=\int_0^t \mathrm{sc}(Z_s^i,W_s^i) \rmd B_s^{i,\mathrm{sc}} +\int_0^t\mathrm{rc}(Z_s^i,W_s^i) \rmd B_s^{i,\mathrm{rc}}
\\ \tilde{B}_t^i&:=\int_0^t \mathrm{sc}(Z_s^i,W_s^i) \rmd B_s^{i,\mathrm{sc}} +\int_0^t\mathrm{rc}(Z_s^i,W_s^i)(\Id-e_s^i{e_s^i}^T) \rmd B_s^{i,\mathrm{rc}}
\end{align*}
are $d$-dimensional Brownian motions. Therefore, \eqref{eq:KFP_coupling} defines a coupling between $N$ copies of solutions to \eqref{eq:KFP} and a solution to \eqref{eq:KFP_meanfield_nongradient}. 
The processes $(\{Z_t^i\}_{i=1}^N)_{t\geq 0}$, $(\{W_t^i\}_{i=1}^N)_{t\geq 0}$ and $(\{Q_t^i\}_{i=1}^N)_{t\geq 0}$ satisfy the stochastic differential equations given by
\begin{equation} \label{eq:differenceproc}
\begin{aligned}
\rmd Z_t^i&=W_t^i \rmd t=(Q_t^i-\gamma Z_t^i)\rmd t
\\ \rmd W_t^i&=\Big(-\gamma W_t^i+u\Big( b(\bar{X}_t^i)-b(X_t^{i})+\int_{\mathbb{R}^d}\tilde{b}(\bar{X}_t^i,z)\bar{\mu}_t^x (\rmd z)-N^{-1}\sum_{j=1}^N\tilde{b}(X_t^{i},X_t^{j})\Big)\Big)\rmd t
\\ & +\sqrt{8\gamma u}\rc(Z_t^i,W_t^i)e_t^i{e_t^i}^T\rmd B_t^{i,\rc}
\\  \rmd Q_t^i&= \gamma^{-1}u\Big(b(\bar{X}_t^i)-b(X_t^{i})+\int_{\mathbb{R}^d}\tilde{b}(\bar{X}_t^i,z)\bar{\mu}_t^x (\rmd z)-N^{-1}\sum_{j=1}^N\tilde{b}(X_t^{i},X_t^{j})\Big)\rmd t
\\ & +\sqrt{8\gamma^{-1}u}\rc(Z_t^i,W_t^i)e_t^i{e_t^i}^T\rmd B_t^{i,\rc},
\end{aligned}
\end{equation}
for all $i=1,...,N$. 

The proof of \Cref{thm:propofchaos} relies on three auxiliary lemmata. 
We abbreviate $r_l^i(t)=r_l((\bar{X}_t^i,\bar{Y}_t^i),(X_t^i,Y_t^i))$, $r_s^i(t)=r_s((\bar{X}_t^i,\bar{Y}_t^i),(X_t^i,Y_t^i))$ and $\Delta^i(t)=\Delta((\bar{X}_t^i,\bar{Y}_t^i),(X_t^{i},Y_t^{i}))$. 

\begin{lemma} \label{lem:contr_outside}
Suppose \Cref{ass:b} and \Cref{ass:tildeb} hold. Suppose that \eqref{eq:cond_gamma} holds. 
Let $\tau>0$ be given by \eqref{eq:tau1}. Let $(\{(\bar{X}_t^i,\bar{Y}_t^i),(X_t^{i},Y_t^{i})\}_{i=1}^N)_{t\geq 0}$ be a solution to \eqref{eq:KFP_coupling}. 
Then for  $i\in\{1,\ldots,N\}$ with $\Delta^i(t)\geq D_{\mathcal{K}} $, it holds 
\begin{equation}\label{eq:contr_outside}
\begin{aligned} 
\rmd r_l^i(t)&\leq -c_1 r_l^i(t) \rmd t  
  + \frac{|(1-2\tau)Z_t^i+2\gamma^{-1}W_t^i|}{2 \gamma r_l^i(t)}u\Big(\tilde{L}N^{-1}\sum_{j=1}^N(|Z_t^j|+|Z_t^i|)+A_t^i\Big)\rmd t 
\\ & + \sqrt{2\gamma^{-1}}\rc(Z_t^i,W_t^i)\frac{(1-2\tau)Z_t^i+2\gamma^{-1}W_t^i}{r_l^i(t)}\cdot e_t^i{e_t^i}^T\rmd B_t^i,
\end{aligned} 
\end{equation}
where $c_1=\tau\gamma/2$ and $\{A_t^i\}_{i=1}^N$ is given by
\begin{align} \label{eq:bound_A_t^i}
A_t^{i}:=\Big|\int_{\mathbb{R}^d}\tilde{b}(\bar{X}_t^i,z)\bar{\mu}_t^x (\rmd z)-N^{-1}\sum_{j=1}^N\tilde{b}(\bar{X}_t^i,\bar{X}_t^j)\Big| \qquad \text{ with } \bar{\mu}_t^x=\mathrm{Law}(\bar{X}_t^i).
\end{align} 
\end{lemma}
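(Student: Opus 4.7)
The plan is to mimic the proof of Lemma~\ref{lem:contr_outside_nonl} almost verbatim, with the only substantive change being how the interaction term is split. First I would introduce the same positive-definite matrices $A,B,C\in\mathbb{R}^{d\times d}$ defined in \eqref{eq:matrix_dist}, so that $r_l^i(t)^2=Z_t^i\cdot(AZ_t^i)+Z_t^i\cdot(BW_t^i)+W_t^i\cdot(CW_t^i)$. Applying It\^o's formula with the difference-process SDEs \eqref{eq:differenceproc} (and using $b(x)=-Kx+g(x)$ together with the structure of $A,B,C$) gives exactly the same cancellations as in Lemma~\ref{lem:contr_outside_nonl}: the cross term $Z_t^i\cdot(2A-\gamma B-2uKC)W_t^i$ vanishes, the $|W_t^i|^2$ coefficient becomes $-2\tau\gamma^{-1}$, and the quadratic form in $Z_t^i$ contributes $-2\tau\gamma\, Z_t^i\cdot(AZ_t^i)$ plus the defect $(1-2\tau)\gamma^{-1}uL_g|Z_t^i|^2\mathbf{1}_{\{|Z_t^i|<R\}}$ coming from the co-coercivity inequality \eqref{eq:estimatemu}.

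The interaction contribution to $\rmd(Z_t^i\cdot BW_t^i+W_t^i\cdot CW_t^i)$ differs from the McKean--Vlasov case because the empirical mean $N^{-1}\sum_j\tilde b(X_t^i,X_t^j)$ replaces $\int\tilde b(\bar X_t',z)\bar\nu_t^x(\rmd z)$. I would handle this by adding and subtracting $N^{-1}\sum_j\tilde b(\bar X_t^i,\bar X_t^j)$ and using the triangle inequality:
\begin{align*}
\Big|\!\int\tilde b(\bar X_t^i,z)\bar\mu_t^x(\rmd z)-N^{-1}\!\sum_j\tilde b(X_t^i,X_t^j)\Big|
\le A_t^i+\tilde L\, N^{-1}\sum_{j=1}^N(|Z_t^i|+|Z_t^j|),
\end{align*}
where the first summand is $A_t^i$ by definition \eqref{eq:bound_A_t^i} and the second follows from Assumption~\ref{ass:tildeb}. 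Multiplied by the prefactor $|BZ_t^i+2CW_t^i|=\gamma^{-1}|(1-2\tau)Z_t^i+2\gamma^{-1}W_t^i|$ coming from the It\^o expansion, this produces the displayed driving term in \eqref{eq:contr_outside}.

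Next I would collect terms to obtain
\begin{align*}
\rmd r_l^i(t)^2\le -2\tau\gamma\,r_l^i(t)^2\rmd t+\gamma^{-1}(1-2\tau)L_gu|Z_t^i|^2\mathbf{1}_{\{|Z_t^i|<R\}}\rmd t+8\gamma^{-1}u\,\rc(Z_t^i,W_t^i)^2\rmd t+(\text{driving})\rmd t+\rmd N_t^i,
\end{align*}
with $N_t^i$ the martingale piece from the reflection noise. The condition $\Delta^i(t)\ge D_{\mathcal{K}}$ forces $r_l^i(t)^2\ge\mathcal{R}$ via \eqref{eq:D_K}--\eqref{eq:K}, and by the definition of $\mathcal{R}$ in \eqref{eq:mathcalR} this absorbs both $L_guR^2\gamma^{-1}$ and $8\gamma^{-1}u\mathbf{1}_{\{R>0\}}$ into half of $\tau\gamma\,r_l^i(t)^2$, leaving the clean drift $-c_1 r_l^i(t)^2$ with $c_1=\tau\gamma/2$. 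Finally, applying It\^o to $\sqrt{\cdot}$ (noting that the second derivative of the square root is negative, so the quadratic variation contributes a nonpositive correction) divides everything by $2r_l^i(t)$ and yields \eqref{eq:contr_outside}.

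The only genuinely new ingredient compared to Lemma~\ref{lem:contr_outside_nonl} is the empirical-versus-exact-mean bookkeeping, so the main obstacle is purely notational: one must keep careful track of the indices in the coupled-interactions term $N^{-1}\sum_j(\tilde b(\bar X^i,\bar X^j)-\tilde b(X^i,X^j))$, and verify that the factor $\gamma^{-1}$ produced by $B$ and $C$ matches the prefactor written in \eqref{eq:contr_outside}. No new estimates on the coupling coefficients $\rc,\sc$ or on $f$ are required, since at this stage the lemma concerns only the large-distance regime where the synchronous-type calculation for $r_l$ applies.
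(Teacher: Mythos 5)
Your proposal is correct and matches the paper's proof essentially step for step: the same matrices $A,B,C$ from \eqref{eq:matrix_dist}, the same add-and-subtract of $N^{-1}\sum_j\tilde b(\bar X_t^i,\bar X_t^j)$ producing $A_t^i$ plus the Lipschitz term $\tilde L N^{-1}\sum_j(|Z_t^i|+|Z_t^j|)$, the same absorption of the defect terms $\gamma^{-1}(1-2\tau)L_gu|Z_t^i|^2\1_{\{|Z_t^i|<R\}}$ and $8\gamma^{-1}u\rc^2$ via $r_l^i(t)^2\ge\mathcal R$ and \eqref{eq:mathcalR}, and the same concluding square-root step giving $c_1=\tau\gamma/2$. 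One cosmetic slip: the cross term $Z_t^i\cdot((2A-\gamma B-2uKC)W_t^i)$ does not vanish but equals $-2\tau\gamma\,Z_t^i\cdot(BW_t^i)$, which is exactly the cross part of $-2\tau\gamma\,r_l^i(t)^2$, so the collected drift you state is unaffected.
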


\begin{proof} 
By Ito's formula, it holds for $(\{Z_t^i,W_t^i\}_{i=1}^N)_{t\geq 0}=(\{\bar{X}_t^i-X_t^{i},\bar{Y}_t^{i}-Y_t^{i}\}_{i=1}^N)_{t\geq 0}$,
\begin{align*}
\begin{cases}
\rmd Z_t^i&=W_t^i \rmd t  \\
\rmd  W_t^i& = (-\gamma W_t^i+u(b(\bar{X}_t^i)-b(X_t^i)+N^{-1}\sum_{j=1}^N(\tilde{b}(\bar{X}_t^i,\bar{X}_t^j)-\tilde{b}(X_t^i,X_t^j))  +\tilde{A}_t^{i}))\rmd t
\\ & + \sqrt{8\gamma u} \rc(Z_t^i, W_t^i)e_t^i{e_t^i}^T \rmd B_t^i ,
\end{cases}
\end{align*}
where 
\begin{align*} 
\tilde{A}_t^{i}:=\Big(\int_{\mathbb{R}^d}\tilde{b}(\bar{X}_t^i,z)\bar{\mu}_t^x (\rmd z)-N^{-1}\sum_{j=1}^N\tilde{b} (\bar{X}_t^i,\bar{X}_t^j)\Big) \qquad \text{ with } \bar{\mu}_t^x=\mathrm{Law}(\bar{X}_t^i)
\end{align*} 
for all $i=1,...,N$.
Hence, by Ito's formula it holds for the positive matrices $A,B,C$ given in \eqref{eq:matrix_dist}, 
%
\begin{align*}
\rmd  (Z_t^i&\cdot (A Z_t^i)+ Z_t^i\cdot (BW_t^i)+W_t^i\cdot (C W_t^i)) 
\\ & \le  2(AZ_t^i)\cdot W_t^i \rmd t+ \Big(W_t^i\cdot(BW^i_t)-(BZ^i_t)\cdot (\gamma W^i_t+u K Z^i_t)+L_{g}u(1-2\tau)\gamma^{-1}|Z^i_t|^2\cdot \1_{\{|Z_t^i| < R\}}\Big)\rmd t
 \\ & + \Big(-2\gamma W^i_t\cdot (CW_t^i)-2 u(CW_t^i)\cdot ( KZ_t^i)+2 L_{g}u|C^{1/2}W_t^i||C^{1/2}Z_t^i|\Big)\rmd t
\\ &  +|B Z^i_t+2C W_t^i|u\Big( \tilde{L}N^{-1}\sum_{j=1}^N(|Z^j_t|+|Z_t^i|)+A_t^i\Big)
 \\ & +\gamma^{-2} 8\gamma u \rc(Z_t^i,W_t^i)^2\rmd t +  \sqrt{8\gamma u} \rc(Z_t^i,W_t^i)(BZ_t^i+CW_t^i)\cdot e_t^i{e_t^i}^T \rmd B_t^i 
\\ &\leq Z_t^i\cdot((-u KB+\gamma^{-1}L_g^2 u^2 C)Z_t^i)\rmd t +Z_t^i\cdot((2A-\gamma B-2u K C) W_t^i) \rmd t
 + W_t^i\cdot((B-\gamma C) W_t^i)\rmd t 
 \\ & + |B Z_t^i+2C W_t^i|u \Big(\tilde{L}N^{-1}\sum_{j=1}^N (|Z_t^j|+|Z_t^i|)+A_t^i\Big)\rmd t +(1-2\tau)\gamma^{-1} L_{g}|Z^i_t|^2\cdot \1_{\{|Z_t^i| < R\}}\rmd t
 \\ &  + 8\gamma^{-1}u (\rc(Z_t^i,W_t^i))^2\rmd t + \sqrt{8\gamma u}\rc(Z_t^i,W_t^i)(B Z_t^i+2C W_t^i)\cdot e_t^i{e_t^i}^T\rmd B_t^i
\end{align*}
with $\{A_t^i\}_{i=1}^N$ given by \eqref{eq:bound_A_t^i}.
By \eqref{eq:tau1} and \eqref{eq:estimatemu},
\begin{align*}
\rmd r_l^i(t)^2&= \rmd (Z_t^i\cdot(AZ_t^i)+ Z_t^i\cdot (BW_t^i)+ W_t^i\cdot(C W_t^i)) 
\\ &\leq -2\tau\gamma r_l^i(t)^2\rmd t  
+ |(1-2\tau)Z_t^i+2\gamma^{-1}W_t^i|\frac{u}{\gamma}\Big(\tilde{L}N^{-1}\sum_{j=1}^N(|Z_t^j|+|Z_t^i|)+A_t^i\Big)\rmd t+8\gamma^{-1} u \rc(Z_t^i,W_t^i)^2 \rmd t
\\ & +\gamma^{-1}(1-2\tau) L_{g} u|Z^i_t|^2\cdot \1_{\{|Z_t^i| < R\}}\rmd t 
 + \sqrt{8\gamma^{-1}u}\rc(Z_t^i,W_t^i)((1-2\tau)Z_t^i+2\gamma^{-1}W_t^i)\cdot e_t^i{e_t^i}^T\rmd B_t^i.
\end{align*}
Since $\Delta^i(t)\geq D_{\mathcal{K}} $, it holds 
$r_l^i(t)^2>\mathcal{R}$ by \eqref{eq:D_K} and \eqref{eq:K}. By \eqref{eq:mathcalR} and \eqref{eq:rc_sc}, 
\begin{align*}
-\tau\gamma r_l^i(t)^2  +\gamma^{-1}(1-2\tau) L_{g}u|Z^i_t|^2\1_{\{|Z_t^i| < R\} }&+8\gamma^{-1}u\rc(Z_t^i,W_t^i)^2 
\leq -\tau\gamma \mathcal{R}+L_g u R^2\gamma^{-1}+8\gamma^{-1}u \1_{\{R>0\}}\leq 0.
\end{align*}
By Ito's formula and since the second derivative of the square root is negative,
\begin{align*}
\rmd r_l^i(t)\leq (2r_l^i(t))^{-1}\rmd r_l^i(t)^2
&\leq -c_1 r_l^i(t) \rmd t  
  + \frac{|(1-2\tau)Z_t^i+2\gamma^{-1}W_t^i|}{2\gamma r_l^i(t)}u\Big(\tilde{L}N^{-1} \sum_{j=1}^N (|Z_t^j|+|Z_t^i|)+A_t^i\Big)\rmd t 
\\ & + \sqrt{2\gamma^{-1}}\rc(Z_t^i,W_t^i)r_l^i(t)^{-1}((1-2\tau)Z_t^i+2\gamma^{-1}W_t^i)\cdot e_t^i{e_t^i}^T\rmd B_t^i,
\end{align*}
which concludes the proof.
\end{proof}

\begin{lemma} \label{lem:contr_inside}
Suppose \Cref{ass:b} and \Cref{ass:tildeb} hold. 
Let $(\{\bar{X}_t^i,\bar{Y}_t^i,X_t^i,Y_t^i\}_{i=1}^N)_{t\ge 0}$ be a solution to \eqref{eq:KFP_coupling}. 
Let $r_s$ be given in \eqref{eq:r_s} with $\alpha$ defined in \eqref{eq:alpha1}. 
If $\Delta^i(t)< D_{\mathcal{K}}$ with $D_{\mathcal{K}}$ given in \eqref{eq:D_K}, it holds
\begin{align*}
\rmd f(r_s^i(t))&\leq -c_2 f(r_s^i(t))\rmd t + \gamma^{-1}\tilde{L}uN^{-1}\sum_{j=1}^N (|Z_t^j|+|Z_t^i|) \rmd t -\frac{\alpha\gamma}{4}f'(R_1)|Z_t^i|\rmd t
\\ & +\gamma^{-1}u\Big|\int_{\mathbb{R}^d} \tilde{b}(\bar{X}_t^i,z)\bar{\mu}_t(\rmd z)-N^{-1}\sum_{j=1}^N\tilde{b}(\bar{X}_t^i,\bar{X}_t^j)\Big| \rmd t+(1+\alpha)\gamma\xi\rmd t+ \rmd M_t^i,
\end{align*}
where $f$ is given in \eqref{eq:f}, $(M_t^i)_{t\geq 0}$ is a martingale and $c_2$ is given in \eqref{eq:c_2}. 
\end{lemma}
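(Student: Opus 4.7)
The plan is to mirror the proof of \Cref{lem:contr_inside_nonl} essentially verbatim, with the only changes being that the nonlinear expectation $\mathbb{E}[|Z_t|]$ is replaced by the empirical average $N^{-1}\sum_j |Z_t^j|$ and that a new residual term $A_t^i$ appears because the mean-field interaction $\int \tilde b(\bar X_t^i,z)\bar\mu_t^x(\rmd z)$ is now compared to an empirical average rather than to another McKean--Vlasov integral.

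First I would derive a bound on $\rmd r_s^i(t)$ from the system \eqref{eq:differenceproc}. Exactly as in \Cref{lem:contr_inside_nonl}, absolute continuity of $t\mapsto |Z_t^i|$ with derivative $(Z_t^i/|Z_t^i|)\cdot(\gamma Q_t^i-\gamma Z_t^i)$ (and $\rmd|Z_t^i|/\rmd t\le \gamma|Q_t^i|$ when $Z_t^i=0$) yields $\rmd|Z_t^i|\le(-\gamma|Z_t^i|+\gamma|Q_t^i|)\rmd t$. For $|Q_t^i|$, Ito's formula applies without a second-order correction because $\partial^2_{q/|q|}|q|=0$ for $q\ne 0$ and $\rc$ vanishes when $Q_t^i=0$. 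To control the drift I would add and subtract $N^{-1}\sum_j \tilde b(\bar X_t^i,\bar X_t^j)$, so that
\begin{align*}
\int_{\mathbb{R}^d}\tilde b(\bar X_t^i,z)\bar\mu_t^x(\rmd z)-N^{-1}\sum_{j=1}^N \tilde b(X_t^i,X_t^j)
=\pm A_t^i + N^{-1}\sum_{j=1}^N\bigl[\tilde b(\bar X_t^i,\bar X_t^j)-\tilde b(X_t^i,X_t^j)\bigr],
\end{align*}
where the first piece is bounded by $A_t^i$ and the second, by \Cref{ass:tildeb}, by $\tilde L N^{-1}\sum_j(|Z_t^i|+|Z_t^j|)$. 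Combined with $|b(\bar X_t^i)-b(X_t^i)|\le (L_K+L_g)|Z_t^i|$ from \Cref{ass:b} and with the choice $\alpha=2(L_K+L_g)u\gamma^{-2}$, this gives
\begin{align*}
\rmd r_s^i(t)\le\Bigl[-\tfrac{\alpha\gamma}{2}|Z_t^i|+\alpha\gamma|Q_t^i|+\gamma^{-1}u\tilde L N^{-1}\sum_{j=1}^N(|Z_t^i|+|Z_t^j|)+\gamma^{-1}uA_t^i\Bigr]\rmd t+\sqrt{8\gamma^{-1}u}\,\rc(Z_t^i,W_t^i){e_t^i}^T\rmd B_t^i.
\end{align*}

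Next I would apply Ito's formula to $f(r_s^i(t))$ with quadratic-variation contribution $4\gamma^{-1}u\rc(Z_t^i,W_t^i)^2 f''(r_s^i(t))$, and then split into the same two cases as in \Cref{lem:contr_inside_nonl}. In Case~1 (when $|Q_t^i|>\xi$ and $\Delta^i(t)<D_{\mathcal K}$) the coupling definition \eqref{eq:rc_sc} gives $\rc=1$ and $r_s^i(t)\le R_1$, so the key ODE inequality \eqref{eq:f_property2}, namely $f'(r)\alpha\gamma r+f''(r)4\gamma^{-1}u\le-2\hat c f(r)$, together with $|Q_t^i|\le r_s^i(t)$ and monotonicity of $f'$, yields
\begin{align*}
\rmd f(r_s^i(t))\le -2\hat c\, f(r_s^i(t))\rmd t-\tfrac{\alpha\gamma}{2}f'(R_1)|Z_t^i|\rmd t+\text{(interaction + residual)}\,\rmd t+\rmd M_t^i.
\end{align*}
In Case~2 (when $|Q_t^i|\le\xi$) I would use $\alpha|Z_t^i|=r_s^i(t)-|Q_t^i|\ge r_s^i(t)-\xi$ to obtain $-\tfrac{\alpha\gamma}{2}|Z_t^i|+\alpha\gamma|Q_t^i|\le -\tfrac{1}{2}r_s^i(t)+(1+\alpha)\xi$, drop the negative $f''$ term, and split $-\tfrac{\gamma}{2}r_s^i f'(r_s^i)$ into halves: one half is estimated by $-\tfrac{\gamma}{8}\frac{R_1\phi(R_1)}{\Phi(R_1)}f(r_s^i(t))$ via $\psi\ge 1/2$ and monotonicity of $r\phi(r)/\Phi(r)$; the other half, using $r_s^i\ge\alpha|Z_t^i|$, is estimated by $-\tfrac{\gamma\alpha}{4}f'(R_1)|Z_t^i|$. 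Taking $\xi\to 0$ and combining both cases produces the asserted bound with $c_2$ given by \eqref{eq:c_2}.

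There is essentially no new obstacle: the proof is a componentwise replica of that of \Cref{lem:contr_inside_nonl}. The only point requiring care is the algebraic rearrangement of the interaction term in the drift of $Q_t^i$, so that the Lipschitz-controlled contribution $\tilde L N^{-1}\sum_j(|Z_t^i|+|Z_t^j|)$ and the law-vs-empirical residual $A_t^i$ are cleanly separated; this separation is what later allows the proof of \Cref{thm:propofchaos} to control the $N^{-1/2}$ rate using standard moment bounds on $A_t^i$ when $(\bar X_t^i)_i$ are i.i.d.
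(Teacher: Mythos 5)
Your proposal is correct and follows essentially the same route as the paper: the same decomposition of the interaction drift by adding and subtracting $N^{-1}\sum_j\tilde b(\bar X_t^i,\bar X_t^j)$ to isolate $A_t^i$ and the Lipschitz term, the same bounds on $\rmd|Z_t^i|$ and $\rmd|Q_t^i|$, and the same two-case argument using \eqref{eq:f_property2} and the split of $-\tfrac{\gamma}{2}r_s^i f'(r_s^i)$. The only small correction is that you should not take $\xi\to 0$ at this stage: the asserted bound retains the $(1+\alpha)\gamma\xi$ term (and the coupling functions $\rc,\sc$ depend on $\xi$), so the limit is deferred to the proof of \Cref{thm:propofchaos}.
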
 

\begin{proof}
The proof works similarly as the proof of \Cref{lem:contr_inside_nonl}.
First, note that for all $i$, $(Z_t^i)_{t\geq 0}$ is  almost surely continuously differentiable with derivative $\rmd Z^i/\rmd t=-\gamma Z^i+\gamma Q^i$ and hence $t\to|Z_t^i|$ is almost surely absolutely continuous with 
\begin{align*}
&\frac{\rmd}{\rmd t}|Z_t^i|=\frac{Z_t^i}{|Z_t^i|}\cdot(-\gamma Z_t^i +\gamma Q_t^i) &&\text{ for a.e. }t \text{ such that } Z_t^i\neq 0 \text{ and }
\\ & \frac{\rmd }{\rmd t} |Z_t^i| \leq \gamma |Q_t^i| &&\text{ for a.e. }t\text{ such that } Z_t^i = 0.
\end{align*}
and therefore
\begin{align} \label{eq:Z_t^i}
\frac{\rmd}{\rmd t}|Z_t^i|\leq -\gamma |Z_t^i|+\gamma |Q_t^i| \qquad \text{ for a.e. } t\geq 0.
\end{align}
By Ito's formula and by \Cref{ass:b} and \Cref{ass:tildeb}, we obtain for $|Q_t^i|$,
\begin{align*}
\rmd |Q_t^i|&= \gamma^{-1} u{e_t^i}\cdot\Big(b(\bar{X}_t^i)-b(X_t^i)+\int_{\mathbb{R}^d}\tilde{b}(\bar{X}_t^i,z)\bar{\mu}_t^x (\rmd z)-N^{-1}\sum_{j=1}^N\tilde{b}(X_t^i,X_t^j)\Big)\rmd t
 +\sqrt{8\gamma^{-1} u}\rc(Z_t^i,W_t^i){e_t^i}^T\rmd B_t^i
\\ & \leq \gamma^{-1}u(L_K+L_g)|Z_t^i|\rmd t+\gamma^{-1}u(A_t^i+N^{-1}\sum_{j=1}^N\tilde{L}(|Z_t^j|+|Z_t^i|))\rmd t +\sqrt{8\gamma^{-1}u}\rc(Z_t^i,W_t^i){e_t^i}^T\rmd B_t^{i,\rc},
\end{align*}
where $A_t^i$ is given by \eqref{eq:bound_A_t^i}. Note that there is no Ito correction term, since $\partial^2_{q/|q|}|q|=0$ for $q\neq 0$ and $\rc=0$ for $Q_t=0$. Combining this bound and \eqref{eq:Z_t^i} yields for $f(r_s^i(t))$ by Ito's formula,
\begin{align*}
\rmd f(r_s^i(t))&=f'(r_s^i(t))\Big(((L_K+L_g)u\gamma^{-2}-\alpha)\gamma|Z_t^i|+\alpha\gamma|Q_t^i|+\gamma^{-1} u\Big(A_t^i+N^{-1}\sum_{j=1}^N\tilde{L}(|Z_t^j|+|Z_t^i|)\Big)\Big) \rmd t
\\ & +f'(r_s^i(t))\sqrt{8\gamma^{-1} u}\mathrm{rc}(Z_t^i,W_t^i)(e_t^i)^T \rmd B_t^{i,\mathrm{rc}}+f''(r_s^i(t))4\gamma^{-1} u\mathrm{rc}(Z_t^i,W_t^i)^2\rmd t.
\end{align*}
\textit{Case 1:} Consider $\Delta^i(t)< D_{\mathcal{K}}$ and $|Q_t^i|>\xi$, then $\rc(Z_t^i,W_t^i)=1$ and $r_s^i(t)<R_1$. Hence, by \eqref{eq:f_property2} we obtain
\begin{align*}
 \rmd f(r_s^i(t))&\leq f'(r_s^i(t))\alpha\gamma r_s^i(t) \rmd t+f''(r_s^i(t))4\gamma^{-1} u \rmd t +\gamma^{-1} u\Big(A_t^i+N^{-1}\sum_{j=1}^N\tilde{L}(|Z_t^j|+|Z_t^i|)\Big)\rmd t
 \\ & \quad -f'(R_1)\frac{1}{2}\gamma\alpha|Z_t^i|\rmd t+\rmd M_t^i
 \\ &\leq -2\hat{c}f(r_s^i(t))\rmd t+\gamma^{-1} u\Big(A_t^i+N^{-1}\sum_{j=1}^N\tilde{L}(|Z_t^j|+|Z_t^i|)\Big)\rmd t-f'(R_1)\frac{\gamma\alpha}{2}|Z_t^i|\rmd t+\rmd M_t^i
 \\ &\leq -c_2f(r_s^i(t))\rmd t+\gamma^{-1} u\Big(A_t^i+N^{-1}\sum_{j=1}^N\tilde{L}(|Z_t^j|+|Z_t^i|)\Big)\rmd t-f'(R_1)\frac{\gamma\alpha}{2}|Z_t^i|\rmd t+\rmd M_t^i.
\end{align*}
\textit{Case 2:} Consider $\Delta^i(t)< D_{\mathcal{K}}$ and $|Q_t^i|\leq \xi$, then $\alpha|Z_t^i|= r_s^i(t)-|Q_t^i|\geq r_s^i(t)-\xi$.
We note that
\begin{align*}
((L_K+L_g) u\gamma^{-2}-\alpha)|Z_t^i|+\alpha|Q_t^i|\leq -\frac{1}{2}r_s^i(t)+(1+\alpha)\xi.
\end{align*}
Since the second derivative of $f$ is negative and $\psi(s)\in[1/2,1]$, it holds
\begin{align*}
 \rmd f(r_s^i(t))&\leq -\frac{ \gamma}{2}r_s^i(t)f'(r_s(t))\rmd t+ (1+\alpha)\gamma\xi \rmd t+\gamma^{-1} u\Big(A_t^i+N^{-1}\sum_{j=1}^N\tilde{L}(|Z_t^j|+|Z_t^i|)\Big)\rmd t+\rmd M_t^i
 \\ & \leq  -\frac{\gamma}{8}\inf_{r\leq R_1}\frac{r\phi(r)}{\Phi(r)}f(r_s^i(t))\rmd t-\frac{\gamma\alpha}{4}|Z_t^i|f'(R_1)\rmd t+ (1+\alpha)\gamma\xi \rmd t
 \\ & \quad+\gamma^{-1} u\Big(A_t^i+N^{-1}\sum_{j=1}^N\tilde{L}(|Z_t^j|+|Z_t^i|)\Big)\rmd t+\rmd M_t^i
  \\ & \leq  -\frac{\gamma}{8}\frac{R_1\phi(R_1)}{\Phi(R_1)}f(r_s^i(t))\rmd t-\frac{\gamma\alpha}{4}|Z_t^i|f'(R_1)\rmd t+ (1+\alpha)\gamma\xi \rmd t
  \\ & \quad +\gamma^{-1} u\Big(A_t^i+N^{-1}\sum_{j=1}^N\tilde{L}(|Z_t^j|+|Z_t^i|)\Big)\rmd t+\rmd M_t^i. 
\end{align*}
Combining the two cases, we obtain the result by using the definition of $c_2$ given in \eqref{eq:c_2}.
\end{proof}

\begin{lemma} \label{lem:momentbound}(Moment control for Langevin dynamics)
Suppose that \Cref{ass:b} and \Cref{ass:tildeb} hold. 
Suppose that \eqref{eq:cond_gamma} and \eqref{eq:condition_tildeL} hold. Let $(\bar{X}_t,\bar{Y}_t)_{t\geq 0}$ be a solution to \eqref{eq:KFP} with $\mathbb{E}[|\bar{X}_0|^2+|\bar{Y}_0|^2]\leq \infty$. Then there exists a finite constant $\mathcal{C}_2>0$ such that 
\begin{align*}
\sup_{t\geq 0} \mathbb{E}[|\bar{X}_t|^2]\leq \mathcal{C}_2.
\end{align*}
The constant $\mathcal{C}_2$ depends on $\gamma$, $\mathbb{E}[|\bar{X}_0|^2+|\bar{Y}_0|^2]$, $d$, $R$, $\kappa$, $L_g$, $u$ and $\tilde{L}$.
\end{lemma}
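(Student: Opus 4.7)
The plan is to run a Lyapunov argument with a quadratic form that mirrors $r_l^2$ but is evaluated on a single copy of the process. Define
\[V(x,y):=\gamma^{-2} u\,x\cdot(Kx)+\tfrac{1}{2}|(1-2\tau)x+\gamma^{-1}y|^2+\tfrac{1}{2}\gamma^{-2}|y|^2,\]
with $\tau$ as in \eqref{eq:tau1}. By the equivalence argument underlying \eqref{eq:rho_equivalence}, one has $V(x,y)\ge\kappa u\gamma^{-2}|x|^2$ and $V(x,y)$ is bounded above by a constant multiple of $|x|^2+|y|^2$, so it suffices to prove $\sup_{t\ge 0}\mathbb{E}[V(\bar{X}_t,\bar{Y}_t)]<\infty$.

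Applying Itô's formula to $V(\bar{X}_t,\bar{Y}_t)$ and repeating the algebra of \Cref{lem:contr_outside_nonl} with $\bar X_t,\bar Y_t$ in the roles of $Z_t,W_t$ and the full drift $-\gamma\bar Y_t+ub(\bar X_t)+u\int\tilde b(\bar X_t,z)\bar\mu_t^x(\rmd z)$ replacing the coupled difference, the splitting $b(x)=-Kx+g(x)$ from \Cref{ass:b} yields for the generator $\mathcal L$
\begin{align*}
\mathcal LV(x,y)&\le -2\tau\gamma V(x,y)+\gamma^{-1}L_g uR^2\1_{\{R>0\}}+2\gamma^{-1}ud\\
&\quad+|(1-2\tau)x+2\gamma^{-1}y|\,\gamma^{-1}u\Big|\int_{\mathbb R^d}\tilde b(x,z)\bar\mu_t^x(\rmd z)\Big|,
\end{align*}
where the extra $2\gamma^{-1}ud$ comes from the Brownian trace. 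The mean-field factor is bounded pointwise by $\tilde L|x|+\tilde L\,\mathbb E[|\bar X_t|]+|\tilde b(0,0)|$ using \Cref{ass:tildeb}, and the $\tilde L|x|$ piece is absorbed into $\tau\gamma V(x,y)$ via Young's inequality; the smallness condition \eqref{eq:condition_tildeL} provides exactly the required margin.

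Taking expectations and applying Cauchy--Schwarz to the remaining cross terms, together with $\mathbb E[|\bar X_t|]\le(\kappa u\gamma^{-2})^{-1/2}\mathbb E[V(\bar X_t,\bar Y_t)]^{1/2}$, produces an ODI of the form
\[\tfrac{\rmd}{\rmd t}\mathbb E[V(\bar X_t,\bar Y_t)]\le -c\,\mathbb E[V(\bar X_t,\bar Y_t)]+C_1\,\mathbb E[V(\bar X_t,\bar Y_t)]^{1/2}+C_2,\]
with $c>0$ and $C_1,C_2<\infty$ depending only on $\gamma,u,d,R,\kappa,L_g,\tilde L$ and $|\tilde b(0,0)|$. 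A further Young step absorbs $C_1\sqrt{\mathbb E[V]}$ into $(c/2)\mathbb E[V]+C_1^2/(2c)$, and Gronwall's inequality then bounds $\mathbb E[V(\bar X_t,\bar Y_t)]$ uniformly in $t$, from which $\sup_t\mathbb E[|\bar X_t|^2]\le\mathcal C_2$ follows by dividing by $\kappa u\gamma^{-2}$. The main technical obstacle is that the mean-field term couples the running Lyapunov functional to the first moment of the law of $\bar X_t$, so it cannot be treated as external forcing: \eqref{eq:condition_tildeL} is invoked precisely to render the net coefficient of $\mathbb E[V]$ strictly negative, closing the Gronwall loop.
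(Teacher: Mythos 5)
Your proposal is correct and follows essentially the same route as the paper's proof: the same quadratic Lyapunov functional $\gamma^{-2}u\,x\cdot(Kx)+\tfrac12|(1-2\tau)x+\gamma^{-1}y|^2+\tfrac12\gamma^{-2}|y|^2$, It\^o's formula with the splitting $b=-Kx+g$, absorption of the mean-field coupling through $\mathbb{E}[|\bar X_t|]$ using the smallness condition \eqref{eq:condition_tildeL} and Young's inequality, and Gr\"onwall. The only (harmless) bookkeeping difference is that, since $g$ itself rather than a difference of $g$-values enters the single-copy drift, terms proportional to $|g(0)|$ appear alongside $|\tilde b(0,0)|$ and should be carried into your constants $C_1,C_2$, exactly as in the paper's computation.
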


\begin{proof}
We adapt the proof idea from \cite[Lemma 8]{DuEbGuZi20}. 
By Ito's formula, by \Cref{ass:b} and by \Cref{ass:tildeb}, it holds
\begin{align*}
& \rmd (\gamma^{-2} u\bar{X}_t\cdot (K\bar{X}_t)+\frac{1}{2}|(1-2\tau)\bar{X}_t+\gamma^{-1}\bar{Y}_t|^2+\frac{1}{2}\gamma^{-2}|\bar{Y}_t|^2)
\\ & \leq \Big(2\gamma^{-2} u\bar{X}_t\cdot(K\bar{Y}_t)+(1-2\tau)^2\bar{X}_t\cdot\bar{Y}_t+\gamma^{-1}(1-2\tau)|\bar{Y}_t|^2\Big)\rmd t
 + \gamma^{-1}(1-2\tau)\Big(- u \bar{X}_t\cdot (K\bar{X}_t)-\gamma\bar{X}_t\cdot\bar{Y}_t\Big)\rmd t
\\ & +2\gamma^{-2}\Big(-u(K \bar{Y}_t)\cdot\bar{X}_t+L_g|\bar{Y}_t||\bar{X}_t|-\gamma|\bar{Y}_t|^2\Big)\rmd t +\frac{u}{\gamma}|(1-2\tau)\bar{X}_t+2\gamma^{-1}\bar{Y}_t|  \Big(\tilde{L} (\mathbb{E}[|\bar{X}_t|]+|X_t|)+|\tilde{b}(0,0)|\Big)\rmd t 
\\ & +(1-2\tau)\gamma^{-1} u(L_g|\bar{X}_t|^2+|g(0)||\bar{X}_t|) \1_{\{|\bar{X}_t|<R\}}\rmd t+2\gamma^{-2} u|\bar{Y}_t||g(0)|\rmd t+2\gamma^{-1} u d \rmd t
\\ & +\sqrt{2\gamma^{-1} u}((1-2\tau)\bar{X}_t+2\gamma^{-1}\bar{Y}_t) \rmd B_t
\\ & \le -\gamma^{-1}u(1-2\tau) \bar{X}_t\cdot(K\bar{X}_t)-2\tau\gamma(\gamma^{-2}|\bar{Y}_t|^2+(1-2\tau)\gamma^{-1}\bar{X}_t\cdot \bar{Y}_t)+\gamma^{-3}u^2L_g^2|\bar{X}_t|^2
\\ & +\frac{u}{\gamma}|(1-2\tau)\bar{X}_t+2\gamma^{-1}\bar{Y}_t|  (\tilde{L} (\mathbb{E}[|\bar{X}_t|]+|X_t|)+|\tilde{b}(0,0)|)\rmd t 
 +(1-2\tau)\gamma^{-1} u(L_g|\bar{X}_t|^2+|g(0)||\bar{X}_t|) \1_{\{|\bar{X}_t|<R\}}\rmd t
 \\ &+2\gamma^{-2} u|\bar{Y}_t||g(0)|\rmd t+2\gamma^{-1} u d \rmd t +\sqrt{2\gamma^{-1} u}((1-2\tau)\bar{X}_t+2\gamma^{-1}\bar{Y}_t) \rmd B_t
\end{align*}
 Taking expectation, 
 we obtain
\begin{align*}
&\frac{\rmd}{\rmd t}\mathbb{E}[\gamma^{-2} u\bar{X}_t\cdot (K\bar{X}_t)+\frac{1}{2}|(1-2\tau)\bar{X}_t+\gamma^{-1}\bar{Y}_t|^2+\frac{1}{2}\gamma^{-2}|\bar{Y}_t|^2]
\\& \leq -\gamma^{-1} u(1-2\tau)\mathbb{E}[\bar{X}_t\cdot (K\bar{X}_t)]+\gamma^{-3}u^2 L_g^2\mathbb{E}[|\bar{X}_t|^2]
-2\tau\gamma\Big(\gamma^{-2} \mathbb{E}[|\bar{Y}_t|^2]+(1-2\tau)\gamma^{-1}\mathbb{E}[\bar{X}_t\cdot\bar{Y}_t]\Big)
\\ & 
+(1-2\tau)\gamma^{-1} u (L_gR^2+R|g(0)|)+2\gamma^{-1} ud+u\gamma^{-1}\mathbb{E}\Big[|(1-2\tau)\bar{X}_t+2\gamma^{-1}\bar{Y}_t| \Big(\tilde{L}(\mathbb{E}[|X_t|]+|X_t|)+|\tilde{b}(0,0)|\Big)\Big]
\\ & +2\gamma^{-2} u\mathbb{E}[|\bar{Y}_t|]|g(0)|.
\end{align*} 
We note that by \eqref{eq:condition_tildeL} and by Young's inequality,
\begin{align*}
\gamma^{-1}&\mathbb{E}[|(1-2\tau)\bar{X}_t+2\gamma^{-1}\bar{Y}_t|u(\tilde{L}(\mathbb{E}[|\bar{X}_t|]+|\bar{X}_t|)+|\tilde{b}(0,0)|)]
\\ & \le \frac{\tau \sqrt{\kappa u}}{8}\mathbb{E}[|(1-2\tau)\bar{X}_t+2\gamma^{-1}\bar{Y}_t|(\mathbb{E}[|\bar{X}_t|]+|\bar{X}_t|)]+\gamma^{-1} u\mathbb{E}[|(1-2\tau)\bar{X}_t+2\gamma^{-1}\bar{Y}_t|]|\tilde{b}(0,0)|
\\ & \le \frac{\tau\gamma}{4}\Big(\kappa u\gamma^{-2}\mathbb{E}[|\bar{X}_t|^2]+\frac{1}{4}\mathbb{E}[|(1-2\tau)\bar{X}_t+2\gamma^{-1}\bar{Y}_t|^2]\Big)
 +\frac{\tau\gamma}{4}\frac{1}{4}\mathbb{E}[|(1-2\tau)\bar{X}_t+2\gamma^{-1}\bar{Y}_t|^2]+\frac{4 u^2}{\tau\gamma^3}|\tilde{b}(0,0)|^2
\\ & \le \frac{\tau\gamma}{2}\Big(\kappa u\gamma^{-2}\mathbb{E}[|\bar{X}_t|^2]+\frac{1}{2}\mathbb{E}[|(1-2\tau)\bar{X}_t+\gamma^{-1}\bar{Y}_t|^2]+\frac{1}{2}\mathbb{E}[|\gamma^{-1}\bar{Y}_t|^2]\Big)+\frac{4 u^2}{\tau\gamma^3}|\tilde{b}(0,0)|^2
\end{align*}
and 
\begin{align*}
2\gamma^{-2} u \mathbb{E}[|\bar{Y}_t|]|g(0)|\le \frac{\tau \gamma}{2}\frac{1}{2}\mathbb{E}[|\gamma^{-1}\bar{Y}_t|^2]+\frac{4 u^2}{\tau \gamma^3}|g(0)|^2.
\end{align*}
%
Then by \eqref{eq:estimatemu},
\begin{align*}
\frac{\rmd}{\rmd t}&\mathbb{E}\Big[\gamma^{-2} u \bar{X}_t\cdot (K\bar{X}_t)+\frac{1}{2}|(1-2\tau)\bar{X}_t+\gamma^{-1}\bar{Y}_t|^2+\frac{1}{2}\gamma^{-2}|\bar{Y}_t|^2\Big]
\\ & \leq -2\tau\gamma\mathbb{E}\Big[\gamma^{-2} u\bar{X}_t\cdot (K\bar{X}_t)+\frac{1}{2}|(1-2\tau)\bar{X}_t+\gamma^{-1}\bar{Y}_t|^2+\frac{1}{2}\gamma^{-2}|\bar{Y}_t|^2\Big]
+(1-2\tau)\gamma^{-1} uL_gR^2+2\gamma^{-1}u d
\\ & \quad +\tau\gamma\Big(\kappa\gamma^{-2}\mathbb{E}[|\bar{X}_t|^2]+\frac{1}{2}\mathbb{E}[|(1-2\tau)\bar{X}_t+\gamma^{-1}\bar{Y}_t|^2+|\gamma^{-1}\bar{Y}_t|^2]\Big)+4\tau^{-1}\gamma^{-3}u^2(|\tilde{b}(0,0)|^2+|g(0)|^2)
\\ & \le -\tau\gamma\mathbb{E}\Big[\gamma^{-2}u\bar{X}_t\cdot (K\bar{X}_t)+\frac{1}{2}|(1-2\tau)\bar{X}_t+\gamma^{-1}\bar{Y}_t|^2+\frac{1}{2}\gamma^{-2}|\bar{Y}_t|^2\Big]
+(1-2\tau)\gamma^{-1}L_g u R^2+2\gamma^{-1} ud
\\ & \quad+4\tau^{-1}\gamma^{-3}u^2(|\tilde{b}(0,0)|^2+|g(0)|^2).
\end{align*}
By Gr\"{o}nwall's inequality, there exists a constant $\mathbf{C}$ such that
\begin{align*}
\sup_{t\geq0}\mathbb{E}\Big[\gamma^{-2}u\bar{X}_t\cdot (K\bar{X}_t)+\frac{1}{2}|(1-2\tau)\bar{X}_t+\gamma^{-1}\bar{Y}_t|^2+\frac{1}{2}\gamma^{-2}|\bar{Y}_t|^2\Big]\leq \mathbf{C}<\infty.
\end{align*}
Thus, we obtain the result for $\mathcal{C}_2=\mathbf{C}/(\kappa u\gamma^{-2})$.
\end{proof}



\begin{proof}[Proof of \Cref{thm:propofchaos}]
To prove uniform in time propagation of chaos, we consider the coupling \\ $(\{(\bar{X}_t^i,\bar{Y}_t^i),(X_t^i,Y_t^i)\}_{i=1}^N)_{t\geq 0}$ given in \eqref{eq:KFP_coupling} and combine the results of \Cref{lem:contr_outside} and \Cref{lem:contr_inside}. The second moment control given in \Cref{lem:momentbound} will be essential to bound the terms involving the non-linearity. We write here $r_s^i(t)=r_s^i((\bar{X}_t^i,\bar{Y}_t^i),(X_t^i,Y_t^i))$,  $r_l^i(t)=r_l^i((\bar{X}_t^i,\bar{Y}_t^i),(X_t^i,Y_t^i))$, $\Delta^i(t)=r_s^i(t)-\epsilon r_l^i(t)$ and $\rho^i(t)=f((\Delta^i(t)\wedge D_{\mathcal{K}})+\epsilon r_l^i(t))$.
We distinguish two cases for all particles $i=1,...,N$: \\
\textit{Case 1:} Consider $\Delta^i(t) < D_{\mathcal{K}}$. Then 
$\rho^i(t)=f(r_s^i(t))$, and
by \Cref{lem:contr_inside} it holds for $\xi>0$
\begin{align} 
\rmd\rho^i(t)&=\rmd f(r_s^i(t))\leq -c_2 f(r_s^i(t))\rmd t + \gamma^{-1}u\Big(A_t^i+N^{-1}\sum_{j=1}^N\tilde{L}(|Z_t^j|+|Z_t^i|)\Big) \rmd t-\frac{
\alpha\gamma}{4}f'(R_1)|Z_t^i|\rmd t \nonumber
\\ & \quad+(1+\alpha)\gamma\xi \rmd t + \rmd M_t^i \nonumber
\\ & \leq -c_2 f(r_s^i(t))\rmd t + \gamma^{-1}u\Big(A_t^i+N^{-1}\sum_{j=1}^N\tilde{L}|Z_t^j|\Big) \rmd t-\frac{\alpha\gamma}{8}f'(R_1)|Z_t^i|\rmd t +(1+\alpha)\gamma\xi \rmd t + \rmd M_t^i, \label{eq:proof1_6}
\end{align}
where $A_t^{i}$ is given in \eqref{eq:bound_A_t^i} and  $c_2$ is given by \eqref{eq:c_2}. Note the last step holds by \eqref{eq:condition_tildeL}. \\
\textit{Case 2:} Consider $\Delta^i(t)\geq D_{\mathcal{K}}$. 
We obtain by \Cref{lem:contr_outside},
\begin{align*}
\rmd r_l^i(t)&\leq -c_1 r_l^i(t) \rmd t
 +  \frac{|(1-2\tau)Z_t^i+2\gamma^{-2} W_t^i|}{2\gamma r_l^i(t)}u\Big(A_t^i+N^{-1}\sum_{j=1}^N\tilde{L}(|Z_t^j|+|Z_t^i|)\Big) \rmd t
 \\ &  + \sqrt{2\gamma^{-1}u}\rc(Z_t^i,W_t^i)r_l^i(t)^{-1}((1-2\tau)Z_t^i+2\gamma^{-1}W_t^i)\cdot e_t^i{e_t^i}^T\rmd B_t^i
\end{align*}
with $c_1$ given in \Cref{lem:contr_outside}. 
Note that $\frac{\rmd}{\rmd x} f(D_{\mathcal{K}}+\epsilon x)=\epsilon f'(D_{\mathcal{K}}+\epsilon x)$. Further, since $f(D_{\mathcal{K}}+\epsilon x)$ is a concave function, $\frac{\rmd^2}{\rmd x^2} f(D_{\mathcal{K}}+\epsilon x)$ is negative. 
By Ito's formula, we obtain
\begin{align*}
\rmd \rho^i(t)&= \rmd f(D_{\mathcal{K}}+\epsilon r_l^i(t))
\\ & \leq \epsilon f'(D_{\mathcal{K}}+\epsilon r_l^i(t))\Big(-c_1 r_l^i(t)^2
 + \frac{|(1-2\tau)Z_t^i+2\gamma^{-1} W_t^i|}{2\gamma r_l^i(t)}u\Big(A_t^i+N^{-1}\sum_{j=1}^N\tilde{L}(|Z_t^j|+|Z_t^i|)\Big) \Big)\rmd t
 \\ & +  \frac{\epsilon f'(D_{\mathcal{K}}+\epsilon r_l^i(t))}{r_l^i(t)}\sqrt{2\gamma^{-1}u}\rc(Z_t^i,W_t^i)((1-2\tau)Z_t^i+2\gamma^{-1}W_t^i)\cdot e_t^i{e_t^i}^T\rmd B_t^i.
\end{align*} 
By \eqref{eq:proof1_1} and \eqref{eq:proof1_3}, which holds in the same line as in the proof of \Cref{thm:contraction_nonlinear_conf}, it holds 

\begin{equation} \label{eq:proof1_8}
\begin{aligned}
\rmd \rho^i(t)&\leq -f'(R_1)\frac{c_1\epsilon}{2}\min\Big(\frac{\sqrt{\kappa u} \gamma^{-1}}{\sqrt{8}\alpha},\frac{1}{2}\Big) \rho^i(t)\rmd t-f'(R_1)\frac{c_1\epsilon}{2}\sqrt{\kappa u\gamma^{-2}}|Z_t^i|\rmd t
\\ &  +2\epsilon\gamma^{-1}u\Big(A_t^i+N^{-1}\sum_{j=1}^N\tilde{L}(|Z_t^j|+|Z_t^i|)\Big)\rmd t +\rmd M_t^i,
\end{aligned}
\end{equation}
where $(\{M_t^i\}_{i=1}^N)_{t\ge 0}$ is some martingale.

Combining \eqref{eq:proof1_6} and \eqref{eq:proof1_8}, taking expectations and summing over $i=1,\ldots,N$ yields
\begin{equation} \label{eq:proof1_5}
\begin{aligned}
\frac{\rmd }{\rmd t} \mathbb{E}\Big[N^{-1}\sum_{i=1}^N \rho^i(t)\Big]&\leq -\min\Big(c_2,
f'(R_1)\frac{c_1\epsilon}{2}\min\Big(\frac{\sqrt{\kappa u} \gamma^{-1}}{\sqrt{8}\alpha},\frac{1}{2}\Big)\Big)\mathbb{E}\Big[N^{-1}\sum_{i=1}^N \rho^i(t)\Big] +\gamma^{-1}u\mathbb{E}\Big[N^{-1}\sum_{i=1}^N A_t^i\Big]
\\ & -\min\Big(f'(R_1)\frac{\gamma\alpha}{8},f'(R_1)\frac{c_1\epsilon}{2}\sqrt{\kappa u\gamma^{-2}}\Big)\mathbb{E}\Big[N^{-1}\sum_{i=1}^N |Z_t^i|\Big]+\tilde{L}u\gamma^{-1}\mathbb{E}\Big[N^{-1}\sum_{i=1}^N |Z_t^i|\Big]
\\ & \leq  -\min\Big(c_2,
f'(R_1)\frac{c_1\epsilon}{4}\min\Big(\frac{\sqrt{\kappa u} \gamma^{-1}}{\sqrt{8}\alpha},\frac{1}{2}\Big)\Big)\mathbb{E}\Big[N^{-1}\sum_{i=1}^N \rho^i(t)\Big] +\gamma^{-1} u\mathbb{E}\Big[N^{-1}\sum_{i=1}^N A_t^i\Big],
\end{aligned}
\end{equation}
where we used $2\epsilon\leq 1$ for the last term and \eqref{eq:condition_tildeL}.

To bound $\mathbb{E}[A_t^i]$, 
we note that given $\bar{X}_t^i$, $\bar{X}_t^j$, $j\neq i$ are identically and independent distributed with law $\bar{\mu}_t^x$ and
\begin{align}\label{eq:boundA_t^i1}
\mathbb{E}[\tilde{b}(\bar{X}_t^i,\bar{X}_t^j)|\bar{X}_t^i]=\int_{\mathbb{R}^d}\tilde{b}(\bar{X}_t^i,z)\bar{\mu}_t^x(\rmd z).
\end{align}
Hence, 
\begin{equation*} 
\begin{aligned}
\mathbb{E}\Big[|&\int_{\mathbb{R}^d}\tilde{b}(\bar{X}_t^i,z)\bar{\mu}_t^x(\rmd z)-\frac{1}{N}\sum_{j=1}^N\tilde{b}(\bar{X}_t^i,\bar{X}_t^j)|^2 \Big|\bar{X}_t^i\Big]
\\ & =\frac{N-1}{N^2} \Var_{\bar{\mu}_t^x}(\tilde{b}(\bar{X}_t^i, \cdot))
+\frac{1}{N^2}\mathbb{E}\Big[|\int_{\mathbb{R}^d}\tilde{b}(\bar{X}_t^i,z)\bar{\mu}_t^x(\rmd z)-\tilde{b}(\bar{X}_t^i,\bar{X}_t^i)|^2 \Big|\bar{X}_t^i\Big]
\\ & +\frac{2}{N^2} \sum_{j=1,j\neq i}^N\mathbb{E}\Big[|\int_{\mathbb{R}^d}\tilde{b}(\bar{X}_t^i, z)\bar{\mu}_t^x(\rmd z)-\tilde{b}(\bar{X}_t^i,\bar{X}_t^j)|\cdot|\int_{\mathbb{R}^d}\tilde{b}(\bar{X}_t^i,z)\bar{\mu}_t^x(\rmd z)-\tilde{b}(\bar{X}_t^i,\bar{X}_t^i)| \Big|\bar{X}_t^i\Big]
%
\end{aligned}
\end{equation*}
By \Cref{ass:tildeb}, Cauchy inequality and Young's inequality
\begin{equation} \label{eq:boundA_t^i2}
\begin{aligned}
\mathbb{E}\Big[\Big|\int_{\mathbb{R}^d}\tilde{b}(\bar{X}_t^i,z) \bar{\mu}_t^x(\rmd z)-\frac{1}{N}\sum_{j=1}^N\tilde{b}(\bar{X}_t^i,\bar{X}_t^j)\Big|^2\Big]&\leq \frac{4\tilde{L}^2}{N}\int_{\mathbb{R}^d} |x|^2\bar{\mu}_t^x(\rmd x)+ \frac{4\tilde{L}^2}{N^2}\int_{\mathbb{R}^d} |x|^2\bar{\mu}_t^x(\rmd x)
\\ & + \frac{8 \tilde{L}^2}{N}\int_{\mathbb{R}^d} |x|^2\bar{\mu}_t^x(\rmd x).
\end{aligned}
\end{equation}
Then, by Jensen's inequality 
\begin{align*}
\mathbb{E}[A_t^i]\leq \frac{4\tilde{L}}{N^{1/2}}\Big(\int_{\mathbb{R}^d} |x|^2\bar{\mu}_t^x(\rmd x)\Big)^{1/2}.
\end{align*}
By \Cref{lem:momentbound}, there exists a finite constant $\mathcal{C}_1$ such that for $N\geq 2$ and all $i=1,...,N$,
\begin{align} \label{eq:boundA_t^i}
\sup_{t\geq 0}\mathbb{E}[A_t^i]\leq \gamma u^{-1}\mathcal{C}_1N^{-1/2}.
\end{align}
Note that $\mathcal{C}_1$ depends on $\gamma$, $\mathbb{E}[|\bar{X}_0|^2+|\bar{Y}_0|^2]$, $d$, $u$, $R$, $\kappa$, $L_g$ and $\tilde{L}$. Inserting the bound for $\mathbb{E}[A_t^i]$ in \eqref{eq:proof1_5} yields
\begin{align*}
\frac{\rmd }{\rmd t} \mathbb{E}\Big[N^{-1}\sum_{i=1}^N \rho^i(t)\Big]&\leq -\min\Big(c_2,
f'(R_1)\frac{c_1\epsilon}{2}\min\Big(\frac{\sqrt{\kappa u} \gamma^{-1}}{\sqrt{8}\alpha},\frac{1}{2}\Big)\Big)\mathbb{E}\Big[N^{-1}\sum_{i=1}^N \rho^i(t)\Big]
 +\frac{\mathcal{C}_1}{N^{1/2}}.
\end{align*}
Applying Gr\"{o}nwall's inequality and \eqref{eq:c_simplify1} and \eqref{eq:c_simplify2} yields
\begin{align*}
\mathcal{W}_{1,\rho_N}(\bar{\mu}_t^{\otimes N},\mu_t^N )\leq \mathbb{E}\Big[N^{-1}\sum_{i=1}^N \rho^i(t)\Big]&\leq e^{-\tilde{c}t}\mathbb{E}\Big[N^{-1}\sum_{i=1}^N \rho^i(0)\Big] +\mathcal{C}_1N^{-1/2}\tilde{c}^{-1}.
\end{align*}
with $\tilde{c}$ given in \eqref{eq:c_thm_nonl}.
Taking the infimum over all couplings $\omega\in\Pi(\bar{\mu}_0^{\otimes},\mu_0^N)$ concludes the proof of the first result.

The second bound holds by \eqref{eq:rhoN_equivalence} with $M_1$ given in \eqref{eq:M1} and $M_2=\sqrt{2}/\mathbf{C}_1$ given in \eqref{eq:M2}.

\end{proof}

\appendix{
\section{Unconfined nonlinear Langevin dynamics} \label{app:unconf}

\subsection{Contraction for unconfined nonlinear Langevin dynamics} \label{sec:contr_unconf}

Consider the unconfined nonlinear Langevin dynamics given by 
\begin{equation} \label{eq:KFP_unconf}
\begin{cases}
& \rmd \bar{X}_t=\bar{Y}_t \rmd t
\\ & \rmd \bar{Y}_t=(-\gamma \bar{Y}_t +u \int_{\mathbb{R}^d} \tilde{b}(\bar{X}_t,z) \bar{\mu}_t^x(\rmd z))\rmd t +\sqrt{2\gamma u}\rmd B_t, \qquad (\bar{X}_0,\bar{Y}_0)\sim \bar{\mu}_0,
\end{cases}
\end{equation}
where $\gamma, u >0$, $\bar{\mu}_0$ is a probability measure on $\mathbb{R}^{2d}$, $\bar{\mu}_t^x=\Law(\bar{X}_t)$ and $(B_t)_{t\ge 0}$ is a $d$-dimensional standard Brownian motion.
We impose for the function $\tilde{b}$ and for the initial distribution:

\begin{assumption}\label{ass:W} 
The function $\tilde{b}:\mathbb{R}^{2d}\to \mathbb{R}^d$ is Lipschitz continuous, and there exist a function $\tilde{g}:\mathbb{R}^d\to\mathbb{R}^d$ and a positive definite matrix $\tilde{K}\in\mathbb{R}^{d\times d}$ with smallest eigenvalue $\tilde{\kappa}\in(0,\infty)$ and largest eigenvalue $L_{\tilde{K}}\in(0,\infty)$ such that
\begin{align*}
\tilde{b} (x,y)=-\tilde{K}(x-y)+\tilde{g}(x-y) \qquad \text{ for all } x, y\in\mathbb{R}^d,
\end{align*}
and $\tilde{g}$ is Lipschitz continuous with Lipschitz constant $L_{\tilde{g}}\in(0,\infty)$ and anti-symmetric, i.e., $\tilde{g}(-z)=-\tilde{g}(z)$ for all $z\in\mathbb{R}^d$.
\end{assumption}

\begin{assumption}\label{ass:init_distrib}
Let $\bar{\mu}_0\in\mathcal{P}(\mathbb{R}^{2d})$ satisfy $\int_{\mathbb{R}^{2d}} |(x,y)|^2\bar{\mu}_0(\rmd x \rmd y)<\infty$ and  $\int_{\mathbb{R}^{2d}} (x, y) \bar{\mu}_0(\rmd x \rmd y)=0$.
\end{assumption}

By \Cref{ass:W}, it holds $\frac{\rmd}{\rmd t} \mathbb{E}[(X_t,Y_t)]=\mathbb{E}[(Y_t,-\gamma Y_t)]$ and hence by \Cref{ass:init_distrib} $\mathbb{E}[(X_t,Y_t)]=0$ for all $t \ge 0$. Note that this observation is crucial in our analysis, since in general convergence to equilibrium can not be guaranteed for the unconfined dynamics unless the solution is centered or a recentering of the center of mass is considered.


We establish contraction in Wasserstein distance with respect to the distance function $\tilde{r}:\mathbb{R}^{2d}\times\mathbb{R}^{2d}\to[0,\infty)$ 
given by 
\begin{equation}\label{eq:tilder}
\begin{aligned}
\tilde{r}((x,y),(\bar{x},\bar{y}))^2
=\gamma^{-2}u(x-\bar{x})\cdot(\tilde{K}(x-\bar{x}))+\frac{1}{2}|(1-2\sigma)(x-\bar{x})+\gamma^{-1}(y-\bar{y})|^2+\frac{1}{2}\gamma^{-2}|y-\bar{y}|^2, 
\end{aligned}
\end{equation}
for $(x,y),(\bar{x},\bar{y})\in\mathbb{R}^{2d}$ where $\sigma$ is given by
\begin{align} \label{eq:sigma}
\sigma=\min(1/8,\tilde{\kappa}u\gamma^{-2}/2).
\end{align}

\begin{theorem}[Contraction for nonlinear unconfined Langevin dynamics in $L^2$ and $L^1$ Wasserstein distance]\label{thm:contr_unconfined}
Suppose \Cref{ass:W} holds. Let $\bar{\mu}_0$ and $\bar{\nu}_0$ be two probability distributions on $\mathbb{R}^{2d}$ satisfying \Cref{ass:init_distrib}. 
For $t\geq 0$, let $\bar{\mu}_t$ and $\bar{\nu}_t$ be the law of the processes $(\bar{X}_t,\bar{Y}_t)$ and $(\bar{X}_t',\bar{Y}_t')$, respectively, where $(\bar{X}_s,\bar{Y}_s)_{s\geq 0}$ and $(\bar{X}_s',\bar{Y}_s')_{s\geq 0}$ are solutions to \eqref{eq:KFP_unconf} with initial distribution $\bar{\mu}_0$ and $\bar{\nu}_0$, respectively. 
If
\begin{align} \label{eq:condition_LGamma1}
L_{\tilde{g}}\leq \sqrt{\tilde{\kappa}/u}(\gamma/2)\min(1/8,\tilde{\kappa}u\gamma^{-2}/2), 
\end{align}
then
\begin{align} \label{eq:contra_unconf_L2}
\mathcal{W}_{2,\tilde{r}}(\bar{\mu}_t,\bar{\nu}_t)\leq e^{-\hat{c}t}\mathcal{W}_{2,\tilde{r}}(\bar{\mu}_0,\bar{\nu}_0) \qquad \text{and} \qquad
\mathcal{W}_{2}(\bar{\mu}_t,\bar{\nu}_t)\leq M_3 e^{-\hat{c}t}\mathcal{W}_{2}(\bar{\mu}_0,\bar{\nu}_0),
\end{align}
where $\tilde{r}$ is defined in \eqref{eq:tilder} and where the contraction rate $\hat{c}$ is given by
\begin{align} \label{eq:tildec}
\hat{c}=\min(\gamma/16, \tilde{\kappa}\gamma^{-1}/4).
\end{align} 
The constant  $M_3$ is given by
\begin{align} \label{eq:M_2}
M_3=\max(\sqrt{L_{\tilde{K}}u+\gamma^2},\sqrt{3/2})\max(\sqrt{(\tilde{\kappa}u)^{-1}},\sqrt{2}).
\end{align}
Moreover, there exists a unique invariant probability measure $\bar{\mu}_\infty$ for \eqref{eq:KFP_unconf} and convergence in $L^2$ Wasserstein distance to $\bar{\mu}_\infty$ holds.

If
\begin{align} \label{eq:condition_LGamma2}
L_{\tilde{g}}\leq \sqrt{\tilde{\kappa}/u}(\gamma/4)\min(1/8,\tilde{\kappa}\gamma^{-2}/2),
\end{align}
then
\begin{align} \label{eq:contra_unconf_L1}
\mathcal{W}_{1,\tilde{r}}(\bar{\mu}_t,\bar{\nu}_t)\leq e^{-\hat{c}t}\mathcal{W}_{1,\tilde{r}}(\bar{\mu}_0,\bar{\nu}_0) \qquad \text{and} \qquad
\mathcal{W}_{1}(\bar{\mu}_t,\bar{\nu}_t)\leq M_3 e^{-\hat{c}t}\mathcal{W}_{1}(\bar{\mu}_0,\bar{\nu}_0)
\end{align}
and convergence in $L^1$ Wasserstein distance to $\bar{\mu}_\infty$ holds.
\end{theorem}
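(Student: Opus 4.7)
The plan is to combine the synchronous-coupling and twisted $2$-norm argument from the proof of Theorem~\ref{thm:contr_strongconv} with the Lipschitz handling of the nonlinear interaction used in the proof of Theorem~\ref{thm:contraction_nonlinear_conf}. The starting observation is that Assumption~\ref{ass:init_distrib} is preserved by the dynamics: since $\int z\,\bar{\mu}_t^x(\rmd z)=0$ and $\tilde{g}$ is anti-symmetric, taking expectations in \eqref{eq:KFP_unconf} (and using that $(\bar{X}_t, Z)$ are i.i.d.\ under $\bar{\mu}_t^x$, so $\mathbb{E}[\tilde{g}(\bar{X}_t-Z)]=-\mathbb{E}[\tilde{g}(Z-\bar{X}_t)]=0$) gives $\tfrac{\rmd}{\rmd t}\mathbb{E}[\bar{Y}_t]=-\gamma\mathbb{E}[\bar{Y}_t]$ and $\tfrac{\rmd}{\rmd t}\mathbb{E}[\bar{X}_t]=\mathbb{E}[\bar{Y}_t]$, so $\mathbb{E}[\bar{X}_t]=\mathbb{E}[\bar{Y}_t]=0$ for all $t\ge 0$. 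Consequently the mean-field drift reduces to
\begin{equation*}
\int_{\mathbb{R}^d}\tilde{b}(\bar{X}_t,z)\,\bar{\mu}_t^x(\rmd z)=-\tilde{K}\bar{X}_t+\int_{\mathbb{R}^d}\tilde{g}(\bar{X}_t-z)\,\bar{\mu}_t^x(\rmd z),
\end{equation*}
so that the linear part acts as a confinement $-u\tilde{K}\bar{X}_t$ and only the $\tilde{g}$-integral remains nonlinear.

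Let $((\bar{X}_t,\bar{Y}_t),(\bar{X}_t',\bar{Y}_t'))$ be the synchronous coupling of two solutions with initial laws $\bar{\mu}_0, \bar{\nu}_0$, and set $(Z_t,W_t)=(\bar{X}_t-\bar{X}_t',\bar{Y}_t-\bar{Y}_t')$. Introduce the positive-definite matrices
\begin{equation*}
A=\gamma^{-2}u\tilde{K}+\tfrac{1}{2}(1-2\sigma)^2\Id,\qquad B=(1-2\sigma)\gamma^{-1}\Id,\qquad C=\gamma^{-2}\Id,
\end{equation*}
with $\sigma$ as in \eqref{eq:sigma}, so that $\tilde{r}(t)^2=Z_t\cdot(AZ_t)+Z_t\cdot(BW_t)+W_t\cdot(CW_t)$. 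Applying Itô's formula and repeating the algebra of \eqref{eq:calc_stronconv} (with $\tilde{K}$ in place of $K$ and dropping the $\nabla G$ terms) produces
\begin{equation*}
\rmd\tilde{r}(t)^2\le -2\sigma\gamma\,\tilde{r}(t)^2\,\rmd t+\bigl((1-2\sigma)\gamma^{-1}Z_t+2\gamma^{-2}W_t\bigr)\cdot u\,I_t\,\rmd t,
\end{equation*}
where $I_t:=\int\tilde{g}(\bar{X}_t-z)\bar{\mu}_t^x(\rmd z)-\int\tilde{g}(\bar{X}_t'-z')\bar{\nu}_t^x(\rmd z')$. Testing with the synchronous coupling of $(\bar{\mu}_t^x,\bar{\nu}_t^x)$ and using Lipschitz continuity of $\tilde{g}$ yields $|I_t|\le L_{\tilde{g}}(|Z_t|+\mathbb{E}[|Z_t|])$.

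Taking expectations and combining with the estimates $|(1-2\sigma)\gamma^{-1}Z_t+2\gamma^{-2}W_t|\le \sqrt{2}\,\gamma^{-1}\tilde{r}(t)$ and $\tilde{r}(t)\ge\sqrt{u\tilde{\kappa}/\gamma^2}\,|Z_t|$, Cauchy--Schwarz bounds the interaction contribution by a multiple of $L_{\tilde{g}}\sqrt{u/\tilde{\kappa}}\,\gamma^{-1}\mathbb{E}[\tilde{r}(t)^2]$. Condition \eqref{eq:condition_LGamma1} is calibrated so that this multiple is at most $\sigma\gamma$, yielding $\tfrac{\rmd}{\rmd t}\mathbb{E}[\tilde{r}(t)^2]\le -2\hat{c}\,\mathbb{E}[\tilde{r}(t)^2]$ with $\hat{c}$ as in \eqref{eq:tildec}. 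Gr\"onwall, infimum over initial couplings, and the elementary two-sided bound $\min(u\gamma^{-2}\tilde{\kappa},\gamma^{-2}/2)^{1/2}|\cdot|\le\tilde{r}\le\max(u\gamma^{-2}L_{\tilde{K}}+1,3/(2\gamma^2))^{1/2}|\cdot|$ give \eqref{eq:contra_unconf_L2} with $M_3$ as in \eqref{eq:M_2}. The $L^1$ statement \eqref{eq:contra_unconf_L1} is obtained analogously by applying Itô to $\tilde{r}(t)$ rather than $\tilde{r}(t)^2$; a further factor $1/2$ appears in the permissible range of $L_{\tilde{g}}$ because controlling $\mathbb{E}[|Z_t|]$ by $\mathbb{E}[\tilde{r}(t)]$ no longer allows squaring, which explains the discrepancy between \eqref{eq:condition_LGamma1} and \eqref{eq:condition_LGamma2}. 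Existence and uniqueness of $\bar{\mu}_\infty$ follow from a standard Cauchy-sequence argument in $(\mathcal{P}_2,\mathcal{W}_2)$ (respectively in $(\mathcal{P}_1,\mathcal{W}_1)$) using the contraction and the invariance of the centering condition. The main obstacle is keeping the constants sharp enough in the Cauchy--Schwarz step so that the threshold \eqref{eq:condition_LGamma1} really suffices; in particular one must carefully distinguish the contribution of the ``self'' term $|Z_t|$ from the ``mean-field'' term $\mathbb{E}[|Z_t|]$, and bound both by the quadratic form $\mathbb{E}[\tilde{r}(t)^2]$ with the common prefactor $\sqrt{u\tilde{\kappa}/\gamma^2}$.
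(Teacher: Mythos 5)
Your overall route is the same as the paper's (preservation of the centering by anti-symmetry, synchronous coupling, the twisted quadratic form $\tilde r^2=Z\cdot(AZ)+Z\cdot(BW)+W\cdot(CW)$ with $\sigma$ from \eqref{eq:sigma}, Lipschitz bound $|I_t|\le L_{\tilde g}(|Z_t|+\mathbb{E}[|Z_t|])$, Gr\"onwall, norm equivalence, Banach fixed point), and your $L^1$ sketch matches the paper's. The genuine gap is in the quantitative absorption step for the $L^2$ contraction, exactly the point you flag as "the main obstacle". First, your prefactor estimate $|(1-2\sigma)\gamma^{-1}Z_t+2\gamma^{-2}W_t|\le\sqrt{2}\,\gamma^{-1}\tilde r(t)$ is false: writing $a=(1-2\sigma)Z_t+\gamma^{-1}W_t$, $b=\gamma^{-1}W_t$, one only has $|a+b|^2\le 2|a|^2+2|b|^2\le 4\tilde r(t)^2$, and the constant $2$ is attained at $Z_t=0$; this is the bound \eqref{eq:prefactor_interaction} used in the paper (for the $L^1$ case). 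Second, and more importantly, the strategy of bounding the prefactor by a multiple of $\tilde r$ and $|Z_t|$ by $\gamma(u\tilde\kappa)^{-1/2}\tilde r$ separately and then applying Cauchy--Schwarz is too lossy to reach the claimed rate: with the sharp constant one gets
\begin{align*}
L_{\tilde g}u\gamma^{-1}\,\mathbb{E}\big[|(1-2\sigma)Z_t+2\gamma^{-1}W_t|\,(|Z_t|+\mathbb{E}[|Z_t|])\big]
\le 4L_{\tilde g}\sqrt{u/\tilde\kappa}\;\mathbb{E}[\tilde r(t)^2]\le 2\sigma\gamma\,\mathbb{E}[\tilde r(t)^2]
\end{align*}
under \eqref{eq:condition_LGamma1}, which exactly cancels the contraction term $-2\sigma\gamma\,\mathbb{E}[\tilde r(t)^2]$ and yields no decay at all; even with your (incorrect) $\sqrt2$ the net rate falls short of $\hat c$ in \eqref{eq:tildec}.

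The missing idea is the one used in \eqref{eq:apply_cond_Lg}: do not bound both factors by $\tilde r$. Instead apply a weighted Young inequality to the product, $\sqrt{\tilde\kappa u}\,|(1-2\sigma)Z_t+2\gamma^{-1}W_t|\,|Z_t|\le\gamma\big(\tilde\kappa u\gamma^{-2}|Z_t|^2+\tfrac14|(1-2\sigma)Z_t+2\gamma^{-1}W_t|^2\big)$ (and handle the mean-field term $\mathbb{E}[|Z_t|]$ by Cauchy--Schwarz at the level of expectations, which produces the factor $2$ already built into the $\tfrac12$ of \eqref{eq:condition_LGamma1}), and then match the two resulting quadratic terms against the \emph{different} components of $\tilde r^2$: the term $\tilde\kappa u\gamma^{-2}|Z_t|^2$ is absorbed by $\gamma^{-2}uZ_t\cdot(\tilde K Z_t)$, while $\tfrac14|(1-2\sigma)Z_t+2\gamma^{-1}W_t|^2\le\tfrac12|(1-2\sigma)Z_t+\gamma^{-1}W_t|^2+\tfrac12\gamma^{-2}|W_t|^2$ is absorbed by the remaining two components. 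This gives the interaction bounded by $\sigma\gamma\,\mathbb{E}[\tilde r(t)^2]$, hence net decay $-\sigma\gamma$ for $\mathbb{E}[\tilde r(t)^2]$ and the rate $\hat c=\sigma\gamma/2$ claimed in \eqref{eq:contra_unconf_L2}. Bounding $|Z_t|$ and the prefactor each by $\tilde r$ double counts the components of $\tilde r^2$ and loses precisely the factor of $2$ that the threshold \eqref{eq:condition_LGamma1} does not have to spare; with that replacement your argument goes through and coincides with the paper's proof.
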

\begin{proof}
The proof uses a synchronous coupling and is postponed to \Cref{sec:proof_unconf_contr}.
\end{proof}

\begin{remark}
Note that \eqref{eq:contra_unconf_L2} implies directly a bound in $L^p$ Wasserstein distance for $1 \le p < 2$, i.e., by Jensen's inequality it holds $\mathcal{W}_{p}(\bar{\mu}_t,\bar{\nu}_t) \le \mathcal{W}_{2}(\bar{\mu}_t,\bar{\nu}_t) \leq M_0 M_3 e^{-\hat{c}t}\mathcal{W}_{p}(\bar{\mu}_0,\bar{\nu}_0)$, where $M_0=\mathcal{W}_{2}(\bar{\mu}_0,\bar{\nu}_0)/\mathcal{W}_{p}(\bar{\mu}_0,\bar{\nu}_0)$. The additional constant $M_0$ is finite by \Cref{ass:init_distrib}, but it might be very large. Here, contraction in $L^1$ Wasserstein distance is stated separately and \eqref{eq:contra_unconf_L1} is proven directly.
\end{remark}

\begin{remark} By \eqref{eq:condition_LGamma1} and \eqref{eq:condition_LGamma2}, it holds $L_{\tilde{g}}\le \tilde{\kappa}/8$ and $L_{\tilde{g}}\le \tilde{\kappa}/16$, respectively. Hence, contraction is proven for $\tilde{b}$ being a small perturbation of a linear function. Further, the contraction rate is maximized for $\gamma =2 \sqrt{\tilde{\kappa}u}$.
\end{remark}


\begin{remark}
Note that the underlying distance $\tilde{r}$ is defined similarly as $r_l$ in \eqref{eq:r_l} and coincides with $\rho$ defined in \eqref{eq:rho} if $\tilde{K}=K$, $\sigma=\tau$ and $\mathcal{K}=\{(0,0)\}$. Moreover, $\tilde{r}$ is equivalent to the Euclidean distance on $\mathbb{R}^{2d}$, i.e.,
\begin{equation} \label{eq:norm_equiv}
\begin{aligned} 
\min(\tilde{\kappa}u/2,1/4)\gamma^{-2}(|x-\bar{x}|+|y-\bar{y}|)^2 &\leq\min(\tilde{\kappa}u,1/2)\gamma^{-2}|(x,y)-(\bar{x},\bar{y})|^2\leq \tilde{r}((x,y),(\bar{x},\bar{y}))^2
\\ & \leq \max(L_{\tilde{K}}u\gamma^{-2}+1,(3/2)\gamma^{-2})|(x,y)-(\bar{x},\bar{y})|^2 
\\ & \leq \max(L_{\tilde{K}}u\gamma^{-2}+1,(3/2)\gamma^{-2})(|x-\bar{x}|+|y-\bar{y}|)^2.
\end{aligned}
\end{equation}
\end{remark}


\subsection{Uniform in time propagation of chaos in the unconfined case}\label{sec:unconf_propofchaos}

Next, we establish uniform in time propagation of chaos bounds for the unconfined Langevin dynamics.
Fix $N\in\mathbb{N}$. 
We consider the functions $\hat{\rho}_N,\tilde{\rho}_N:\mathbb{R}^{2Nd}\times\mathbb{R}^{2Nd}\to[0,\infty)$ given by 
\begin{align} 
&\hat{\rho}_N((x,y),(\bar{x},\bar{y}))^2:=N^{-1}\sum_{i=1}^N\tilde{r}(\pi(x,y),\pi(\bar{x},\bar{y}))^2, && \text{and} \label{eq:hatrho_N}
\\ & \tilde{\rho}_N((x,y),(\bar{x},\bar{y})):=N^{-1}\sum_{i=1}^N\tilde{r}(\pi(x,y),\pi(\bar{x},\bar{y}))&& \text{for all } x,y,\bar{x},\bar{y}\in\mathbb{R}^{Nd},\label{eq:tilerho_N}
\end{align}
where $\tilde{r}$ is given in \eqref{eq:tilder} and $\pi:\mathbb{R}^{2Nd}\to\mathbb{R}^{2Nd}$ is given by
\begin{align} \label{eq:pi}
\pi(x,y)=\Big(x^i-N^{-1}\sum_{j=1}^N x^j, y^i-N^{-1}\sum_{j=1}^Ny^j\Big)_{i=1}^N \qquad \text{for } (x,y)\in\mathbb{R}^{2Nd}.
\end{align}
The function $\pi$ defines a projection from $\mathbb{R}^{2Nd}$ to the hyperplane $\msh^N=\{(x,y)\in\mathbb{R}^{2Nd}:(\sum_i x^i,\sum_i y^i)=0\}$. 
We note that distances $\hat{\rho}_N$ and $\tilde{\rho}_N$ 
are equivalent to $\tilde{\ell}_N^p$ given by
\begin{align} \label{eq:tildel}
\tilde{\ell}_N^p((x,y),(\bar{x},\bar{y}))=\ell_N^p(\pi(x,y),\pi(\bar{x},\bar{y})), \qquad \text{for all } x,y,\bar{x},\bar{y}\in\mathbb{R}^{Nd},
\end{align}
with $p=1$ and $p=2$, respectively.

\begin{theorem}[Propagation of chaos for unconfined Langevin dynamics in $L^2$ and $L^1$ Wasserstein distance]\label{thm:propofchaos_unconfined}
Suppose \Cref{ass:W} holds.
Let $\bar{\mu}_0$ and $\mu_0$ be two probability distributions on $\mathbb{R}^{2d}$ satisfying \Cref{ass:init_distrib}. 
For $t\geq 0$, let $\bar{\mu}_t$  be the law of the process $(\bar{X}_t,\bar{Y}_t)$, where $(\bar{X}_s,\bar{Y}_s)_{s\geq 0}$ is a solution to \eqref{eq:KFP_unconf} with initial distribution $\bar{\mu}_0$. Let $\mu_t^N$ be the law of $\{X_t^{i,N},Y_t^{i,N}\}_{i=1}^N$, where $(\{X_s^{i,N},Y_s^{i,N}\}_{i=1}^N)_{s\geq 0}$ is a solution to \eqref{eq:KFP_meanfield_nongradient} with $b=0$ and with initial distribution $\mu^N_0=\mu_0^{\otimes N}$.
If $L_{\tilde{g}}$ satisfies \eqref{eq:condition_LGamma1}, then
\begin{align*}
&\mathcal{W}_{2,\hat{\rho}_N}(\bar{\mu}_t^{\otimes N},\mu_t^N)\leq e^{-\hat{c}/2t}\mathcal{W}_{2,\hat{\rho}_N}(\bar{\mu}_0^{\otimes N},\mu_0^N)+\hat{c}^{-1/2}\mathcal{C}_3N^{-1/2} \qquad \text{and}
\\ &\mathcal{W}_{2,\tilde{\ell}_N^2}(\bar{\mu}_t^N,\mu_t^N)\leq \sqrt{2}M_3 e^{-\hat{c}/2t}\mathcal{W}_{2,\tilde{\ell}_N^2}(\bar{\mu}_0^N,\mu_0^N)+M_4\hat{c}^{-1/2}\mathcal{C}_3N^{-1/2},
\end{align*}
where $\hat{c}$, $\tilde{l}_N^2$  and $M_3$ are given in \eqref{eq:tildec}, \eqref{eq:tildel} and \eqref{eq:M_2}, respectively. The constant $M_4$ is given by
\begin{align} \label{eq:M5}
M_4=\gamma \max(\sqrt{2/\tilde{\kappa}},2).
\end{align}
and
$\mathcal{C}_3$ is a positive constant depending on $\gamma$,
$d$, $\tilde{\kappa}$, $L_{\tilde{K}}$, $L_{\tilde{g}}$, $u$ and on the second moment of $\bar{\mu}_0$.
If $L_{\tilde{g}}$ satisfies \eqref{eq:condition_LGamma2}, then
\begin{align*}
&\mathcal{W}_{1,\tilde{\rho}_N}(\bar{\mu}_t^{\otimes N},\mu_t^N)\leq e^{-\hat{c}t}\mathcal{W}_{1,\tilde{\rho}_N}(\bar{\mu}_0^{\otimes N},\mu_0^N)+\hat{c}^{-1}\mathcal{C}_4N^{-1/2} \qquad \text{ and}
\\ &\mathcal{W}_{1,\tilde{\ell}_N^1}(\bar{\mu}_t^{\otimes N},\mu_t^N)\leq \sqrt{2}M_3 e^{-\hat{c}t}\mathcal{W}_{1,\tilde{\ell}_N^1}(\bar{\mu}_0^{\otimes N},\mu_0^N)+M_4\hat{c}^{-1}\mathcal{C}_4N^{-1/2},
\end{align*}
where $\mathcal{C}_4$ is a positive constant depending on $\gamma$, $d$, $\tilde{\kappa}$, $L_{\tilde{K}}$, $L_{\tilde{g}}$, $u$ and on the second moment of $\bar{\mu}_0$.
\end{theorem}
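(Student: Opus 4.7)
The strategy adapts the synchronous-coupling proof of \Cref{thm:propofchaos} to the unconfined setting by carrying out the analysis on the projected coordinates $\pi$, so that the contraction provided by \Cref{thm:contr_unconfined} applies. Let $\{B_t^i\}_{i=1}^N$ be independent $d$-dimensional Brownian motions; drive each particle $(X_t^i, Y_t^i)$ of the $N$-system (with $b\equiv 0$) and each independent McKean--Vlasov copy $(\bar{X}_t^i, \bar{Y}_t^i)$ by the same $B_t^i$. Since the noises cancel, the differences $(Z_t^i, W_t^i) = (\bar{X}_t^i - X_t^i, \bar{Y}_t^i - Y_t^i)$ evolve without a martingale part.

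The core step is to apply Ito's formula to $\tilde{r}((\pi(\bar{X}_t,\bar{Y}_t))^i, (\pi(X_t,Y_t))^i)^2 = \tilde{r}((0,0), (Z_t^i - \bar{Z}_t^N, W_t^i - \bar{W}_t^N))^2$ and reproduce the contraction computation from the proof of \Cref{thm:contr_unconfined}: translation invariance of $-\tilde{K}(\cdot-\cdot)$ and $\tilde{g}(\cdot-\cdot)$ lets the interaction drift pass cleanly through the projection, so that the twisted quadratic form \eqref{eq:matrix_dist} together with the smallness \eqref{eq:condition_LGamma1} (resp.\ \eqref{eq:condition_LGamma2} in the $L^1$ case) yields the rate $2\hat{c}$ on $\tilde{r}^2$. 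Two extra terms appear: an exchange error $L_{\tilde{g}} N^{-1}\sum_j(|Z_t^i| + |Z_t^j|)$ arising when one replaces $\bar{X}_t^j$ by $X_t^j$ inside the particle interaction and absorbed at the cost of halving the rate; and the Monte Carlo fluctuation
\begin{align*}
A_t^i := \Big|\int_{\mathbb{R}^d}\tilde{b}(\bar{X}_t^i, z)\bar{\mu}_t^x(\rmd z) - N^{-1}\sum_{j=1}^N \tilde{b}(\bar{X}_t^i, \bar{X}_t^j)\Big|,
\end{align*}
which is the sole source of the $N^{-1/2}$ remainder.

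To bound $\mathbb{E}[A_t^i]$ uniformly in $t$, I would argue exactly as in \eqref{eq:boundA_t^i1}--\eqref{eq:boundA_t^i}: conditionally on $\bar{X}_t^i$, the $\{\bar{X}_t^j\}_{j\neq i}$ are iid with law $\bar{\mu}_t^x$, giving $\mathbb{E}[A_t^i] \lesssim \tilde{L}\, N^{-1/2}\big(\int |z|^2 \bar{\mu}_t^x(\rmd z)\big)^{1/2}$. The required uniform second moment is obtained by adapting \Cref{lem:momentbound} to the unconfined case: apply Ito to $\gamma^{-2}u\bar{X}_t\cdot(\tilde{K}\bar{X}_t) + \tfrac{1}{2}|(1-2\sigma)\bar{X}_t + \gamma^{-1}\bar{Y}_t|^2 + \tfrac{1}{2}\gamma^{-2}|\bar{Y}_t|^2$ and use that $\mathbb{E}[\bar{X}_t] = 0$ for all $t\geq 0$, which is preserved along the flow by \Cref{ass:init_distrib} together with anti-symmetry of $\tilde{g}$. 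The centering cancels the otherwise-unbounded linear contribution to the interaction, and \eqref{eq:condition_LGamma1} absorbs the Lipschitz error from $\tilde{g}$ into the contractive term, producing a finite asymptotic second moment.

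Summing in $i$, averaging, and taking expectations gives, for the $L^2$ case, $\tfrac{\rmd}{\rmd t}\mathbb{E}[\hat{\rho}_N^2] \leq -\hat{c}\,\mathbb{E}[\hat{\rho}_N^2] + C^2 N^{-1}$; Gronwall's lemma and a square root produce the rate $\hat{c}/2$ and the $\hat{c}^{-1/2}N^{-1/2}$ remainder. The $L^1$ case proceeds directly on $\tilde{r}$ itself (as in \Cref{lem:contr_outside_nonl}), which is why the condition on $L_{\tilde{g}}$ loses a factor of $2$. The final $\tilde{\ell}_N^p$ bounds follow from the norm equivalence \eqref{eq:norm_equiv} combined with the definitions \eqref{eq:hatrho_N}--\eqref{eq:tilerho_N}, giving the multiplicative constants $\sqrt{2}M_3$ and $M_4$. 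The main technical obstacle is the uniform second-moment bound for the unconfined nonlinear dynamics, which requires delicately exploiting the preserved centering $\mathbb{E}[\bar{X}_t]=0$ in the absence of any confining potential; once that is in place, the Monte Carlo estimate and Gronwall argument are essentially identical to the confined case.
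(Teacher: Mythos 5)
Your proposal matches the paper's proof in all essentials: a synchronous coupling driven by shared Brownian motions, the analysis carried out on the centered (projected) differences $\tilde{Z}_t^i,\tilde{W}_t^i$ with the twisted quadratic form $\tilde r$, the splitting of the error into the Lipschitz exchange term and the Monte Carlo fluctuation $A_t^i$, the uniform-in-time second moment bound exploiting the preserved centering $\mathbb{E}[\bar X_t]=0$ (the paper's \Cref{lem:momentbound_unconfined}), and the Gr\"onwall plus square-root (resp.\ pathwise $\tilde r$) argument for the $L^2$ (resp.\ $L^1$) case, followed by the norm equivalence \eqref{eq:norm_equiv}. This is essentially the same route as the paper's proof, so no further comparison is needed.
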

\begin{proof}
The proof is postponed to \Cref{sec:proof_unconf_contr}.
\end{proof}

\begin{remark}
For $t\ge 0$, let $\mu_t^N$ and $\nu_t^N$ denote the law of $\{X_t^{i,N},Y_t^{i,N}\}_{i=1}^N$ and $\{{X_t'}^{i,N},{Y_t'}^{i,N}\}_{i=1}^N$, where the processes $(\{X_s^{i,N},Y_s^{i,N}\}_{i=1}^N)_{s\ge 0}$ and  $(\{{X_s'}^{i,N},{Y_s'}^{i,N}\}_{i=1}^N)_{s\ge 0}$ are solutions to \eqref{eq:KFP_meanfield_nongradient} with initial distributions $\mu_0^N$ and $\nu_0^N$, respectively, and for which \Cref{ass:W} is supposed. An easy adaptation of the proof of \Cref{thm:propofchaos} shows that if \eqref{eq:condition_LGamma1} holds, then 
\begin{align*}
\mathcal{W}_{2,\hat{\rho}_N}(\mu_t^N,\nu_t^N)\leq e^{-\hat{c}t}\mathcal{W}_{2,\hat{\rho}_N}(\mu_0^N, \nu_0^N)\qquad \text{and} \qquad
\mathcal{W}_{2,\tilde{\ell}_N^2}(\mu_t^N,\nu_t^N)\leq \sqrt{2}M_3 e^{-\hat{c}t}\mathcal{W}_{2,\tilde{\ell}_N^2}(\mu_0^N,\nu_0^N),
\end{align*}
and if \eqref{eq:condition_LGamma2} holds, then
\begin{align*}
&\mathcal{W}_{1,\tilde{\rho}_N}(\mu_t^N,\nu_t^N)\leq e^{-\hat{c}t}\mathcal{W}_{1,\tilde{\rho}_N}(\mu_0^N,\nu_0^N) \qquad \text{ and} \qquad \mathcal{W}_{1,\tilde{\ell}_N^1}(\mu_t^N,\nu_t^N)\leq \sqrt{2}M_3 e^{-\hat{c}t}\mathcal{W}_{1,\tilde{\ell}_N^1}(\mu_0^N,\nu_0^N),
\end{align*}
where $\hat{c}$ and  $M_3$ are given in \eqref{eq:tildec} and \eqref{eq:M_2}, respectively.
For the proof, a coupling of two copies of $N$ particle systems is constructed in the same line as \eqref{eq:KFP_coupling_unconfined}. As it will clarify by an inspection of the proof of \Cref{thm:propofchaos}, we can obtain a slightly better contraction rate in $L^2$ Wasserstein distance for the particle system compared to the rate in the propagation of chaos result.
\end{remark}

\subsection{Proof of Section~\ref{sec:contr_unconf} and Section~\ref{sec:unconf_propofchaos}}\label{sec:proof_unconf_contr}

\begin{proof}[Proof of \Cref{thm:contr_unconfined}]
Given two probability measures $\bar{\mu}_0,\bar{\nu}_0$ on $\mathbb{R}^{2d}$ and a $d$-dimensional Brownian motion $(B_t)_{t\ge 0}$,  we consider the synchronous coupling $((\bar{X}_t,\bar{Y}_t),(\bar{X}_t',\bar{Y}_t'))_{t\geq 0}$ of two copies of solutions to \eqref{eq:KFP_unconf} on $\mathbb{R}^{2d}\times\mathbb{R}^{2d}$ given by
\begin{equation} \label{eq:KFP_coupling_unconf_nonl}
\begin{aligned}
&\begin{cases} 
\rmd \bar{X}_t&=\bar{Y}_t \rmd t
\\  \rmd \bar{Y}_t&=(-\gamma\bar{Y}_t+u \int_{\mathbb{R}^d}\tilde{b}(\bar{X}_t,z)\bar{\mu}_t^x (\rmd z))\rmd t +\sqrt{2\gamma u}\rmd B_t, \qquad (\bar{X}_0,\bar{Y}_0)\sim \bar{\mu}_0,
\end{cases}
\\
&\begin{cases}
\rmd \bar{X}_t'&=\bar{Y}_t' \rmd t
\\  \rmd \bar{Y}_t'&=(-\gamma \bar{Y}_t'+u \int_{\mathbb{R}^d}\tilde{b}(\bar{X}_t',\tilde{z})\bar{\nu}_t^x (\rmd \tilde{z}))\rmd t+\sqrt{2\gamma u}\rmd B_t ,  \qquad (\bar{X}_0',\bar{Y}_0')\sim\bar{\nu}_0,
\end{cases}
\end{aligned}
\end{equation}
where $\bar{\mu}_t^x=\Law(\bar{X}_t)$, $\bar{\nu}_t^x=\Law(\bar{X}_t')$. We set $\tilde{Z}_t=\bar{X}_t-\bar{X}_t'$ and $\tilde{W}_t=\bar{Y}_t-\bar{Y}_t'$. By \Cref{ass:W} the process $(\tilde{Z}_t,\tilde{W}_t)_{t\geq 0}$ satisfies
\begin{equation} \label{eq:differenceproc_unconf_nonl}
\begin{aligned}
\begin{cases}
\rmd \tilde{Z}_t&=\tilde{W}_t \rmd t
\\ \rmd \tilde{W}_t&=(-\gamma \tilde{W}_t+u\int_{\mathbb{R}^d}\tilde{b}(\bar{X}_t,z)\bar{\mu}_t^x (\rmd z)-u\int_{\mathbb{R}^d}\tilde{b}(\bar{X}_t',\tilde{z})\bar{\nu}_t^x (\rmd \tilde{z}))\rmd t
\\ & =(-\gamma \tilde{W}_t-u\tilde{K}\tilde{Z}_t+ u \int_{\mathbb{R}^d}\tilde{g}(\bar{X}_t-z) \bar{\mu}_t(\rmd z)-u \int_{\mathbb{R}^d}\tilde{g}(\bar{X}_t'-\tilde{z}) \bar{\nu}_t(\rmd \tilde{z}))\rmd t,
\end{cases}
\end{aligned}
\end{equation}
%
%
where we used that $\mathbb{E}[\tilde{Z}_t]=0$, which holds by \Cref{ass:W} and \Cref{ass:init_distrib}.
Let $\tilde{A},\tilde{B},\tilde{C}\in\mathbb{R}^{d\times d}$ be positive definite matrices given by
\begin{align} \label{eq:matrix_dist_undef}
\tilde{A}=\gamma^{-2}u \tilde{K} +(1/2)(1-2\sigma)^2\mathrm{Id}, \qquad \tilde{B}=(1-2\sigma)\gamma^{-1}\mathrm{Id}, \qquad\text{and}\qquad \tilde{C}=\gamma^{-2}\mathrm{Id},
\end{align} 
where $\sigma$ is given by \eqref{eq:sigma}.
Then, by Ito's formula,
\begin{align*}
\frac{\rmd}{\rmd t}&(\tilde{Z}_t\cdot(\tilde{A}\tilde{Z}_t)+\tilde{Z}_t\cdot(\tilde{B}\tilde{W}_t)+\tilde{W}_t\cdot (\tilde{C}\tilde{W}_t) ) 
\\ & \le 2(\tilde{A}\tilde{Z}_t)\cdot\tilde{W}_t\rmd t +(\tilde{W}_t\cdot(\tilde{B}\tilde{W}_t)-(B\tilde{Z}_t)\cdot(\gamma\tilde{W}_t+u\tilde{K}\tilde{Z}_t))\rmd t-2(\tilde{C}\tilde{W}_t)\cdot(\gamma\tilde{W}_t+u\tilde{K}\tilde{Z}_t)\rmd t
\\ & \quad +L_{\tilde{g}}u|\tilde{B}\tilde{Z}_t+2\tilde{C}\tilde{W}_t|(|\tilde{Z}_t|+\mathbb{E}[|\tilde{Z}_t|])\rmd t
\\ &\leq \tilde{Z}_t\cdot((-u\tilde{K}\tilde{B})\tilde{Z}_t)+\tilde{Z}_t\cdot(2\tilde{A}-\gamma \tilde{B}-2u\tilde{K}\tilde{C})\tilde{W}_t+\tilde{W}_t\cdot((\tilde{B}-\gamma \tilde{C})\tilde{W}_t)
\\ & \quad+L_{\tilde{g}}u|\tilde{B}\tilde{Z}_t+2\tilde{C}\tilde{W}_t|(|\tilde{Z}_t|+\mathbb{E}[|\tilde{Z}_t|])
\\ & \le -2\sigma\gamma (\tilde{Z}_t\cdot(\tilde{A}\tilde{Z}_t)+\tilde{Z}_t\cdot(\tilde{B}\tilde{W}_t)+\tilde{W}_t\cdot (\tilde{C}\tilde{W}_t) ) 
+L_{\tilde{g}}u|\tilde{B}\tilde{Z}_t+2\tilde{C}\tilde{W}_t|(|\tilde{Z}_t|+\mathbb{E}[|\tilde{Z}_t|]),
\end{align*}
where we applied \eqref{eq:sigma} in the last step 
More precisely, it holds for all $z\in\mathbb{R}^d$
\begin{equation}\label{eq:estimate_sigma1}
\begin{aligned}
z\cdot((-u\tilde{K}(1-4\sigma)\gamma^{-1})z) \leq -(\tilde{\kappa}u/2)\gamma^{-1}|z|^2 \leq -\gamma\sigma|z|^2\leq -\gamma\sigma(1-2\sigma)^2|z|^2
\end{aligned}
\end{equation}
and therefore
$z\cdot((-u\tilde{K}(1-2\sigma)\gamma^{-1})z)\leq -2\gamma\sigma (\tilde{\kappa}u\gamma^{-2}+(1/2)(1-2\sigma)^2)|z|^2$.

Then for $\tilde{r}(t)=\tilde{r}((\bar{X}_t,\bar{Y}_t),(\bar{X}_t',\bar{Y}_t'))=(\tilde{Z}_t\cdot(\tilde{A}\tilde{Z}_t)+\tilde{Z}_t\cdot(\tilde{B}\tilde{W}_t)+\tilde{W}_t\cdot (\tilde{C}\tilde{W}_t))^{1/2} $ given in \eqref{eq:tilder},
\begin{align} \label{eq:unconf_contr_proof}
\rmd \tilde{r}(t)^2\leq -2\sigma\gamma \tilde{r}(t)^2\rmd t +L_{\tilde{g}}u\gamma^{-1}|(1-2\sigma)\tilde{Z}_t+2\gamma^{-1}\tilde{W}_t|(|\tilde{Z}_t|+\mathbb{E}[|\tilde{Z}_t|])\rmd t.
\end{align}
By taking expectation, it holds
\begin{align} \label{eq:unconf_W1_proof}
\frac{\rmd }{\rmd t} \mathbb{E}[ \tilde{r}(t)^2]\leq -2\sigma\gamma \mathbb{E}[\tilde{r}(t)^2]+L_{\tilde{g}}u\gamma^{-1}\mathbb{E}[|(1-2\sigma)\tilde{Z}_t+2\gamma^{-1}\tilde{W}_t|(|\tilde{Z}_t|+\mathbb{E}[|\tilde{Z}_t|])]. 
\end{align}
By \eqref{eq:condition_LGamma1}, \eqref{eq:sigma} and Young's inequality, we obtain for the last term
\begin{equation}\label{eq:apply_cond_Lg}
\begin{aligned}
L_{\tilde{g}}u\gamma^{-1}\mathbb{E}[|(1-2\sigma)\tilde{Z}_t&+2\gamma^{-1}\tilde{W}_t|(|\tilde{Z}_t|+\mathbb{E}[|\tilde{Z}_t|])]  \le \frac{\sigma\sqrt{\tilde{\kappa}u}}{2}\mathbb{E}[|(1-2\sigma)\tilde{Z}_t+2\gamma^{-1}\tilde{W}_t|(|\tilde{Z}_t|+\mathbb{E}[|\tilde{Z}_t|]]
\\ & \le \sigma \gamma \Big(\tilde{\kappa}u\gamma^{-2}\mathbb{E}[|\tilde{Z}_t|^2]+\frac{1}{4}\mathbb{E}[|(1-2\sigma)\tilde{Z}_t+2\gamma^{-1}\tilde{W}_t|^2]\Big)
\\ & \le \sigma \gamma \Big(\tilde{\kappa}u\gamma^{-2}\mathbb{E}[|\tilde{Z}_t|^2]+\frac{1}{2}\mathbb{E}[|(1-2\sigma)\tilde{Z}_t+\gamma^{-1}\tilde{W}_t|^2]+\frac{1}{2}\mathbb{E}[|\tilde{W}_t|^2]\Big)\le \sigma \gamma \mathbb{E}[\tilde{r}(t)^2].
\end{aligned}
\end{equation}
By inserting this bound in \eqref{eq:unconf_W1_proof}, we obtain by Gr\"{o}nwall's inequality, 
\begin{align*}
\mathcal{W}_{2,\tilde{r}}(\bar{\mu}_t,\bar{\nu_t})^2\le \mathbb{E}[\tilde{r}(t)^2]
\leq e^{-2\hat{c}t}\mathbb{E}[\tilde{r}(0)^2]
\end{align*}
with $\hat{c}$ given in \eqref{eq:tildec}.
By taking the square root and the infimum over all couplings $\omega\in\Pi(\bar{\mu}_0,\bar{\nu}_0)$, we obtain the first result in $L^2$ Wasserstein distance.
The second bound holds by \eqref{eq:norm_equiv} with $M_3$ given by \eqref{eq:M_2}.
To obtain contraction in $L^1$ Wasserstein distance, we take the square root in \eqref{eq:unconf_contr_proof},
\begin{align*}
\rmd \tilde{r}(t)&\le -\sigma\gamma\tilde{r}(t)\rmd t+L_{\tilde{g}}u\gamma^{-1}\frac{|(1-2\sigma)\tilde{Z}_t+2\gamma^{-1}\tilde{W}_t|}{2\tilde{r}(t)}(|\tilde{Z}_t|+\mathbb{E}[|\tilde{Z}_t|])\rmd t
\\ & \le -\sigma\gamma\tilde{r}(t)\rmd t+L_{\tilde{g}}u\gamma^{-1}(|\tilde{Z}_t|+\mathbb{E}[|\tilde{Z}_t|])\rmd t,
\end{align*}
where the last step holds by
\begin{equation} \label{eq:prefactor_interaction}
\begin{aligned} 
\frac{|(1-2\sigma)\tilde{Z}_t+2\gamma^{-1}\tilde{W}_t|}{2\tilde{r}(t)}&\leq \frac{1}{2}\Big(\frac{(1-2\sigma)^2|\tilde{Z}_t|^2+4(1-2\sigma)\gamma^{-1}\tilde{Z}_t\cdot\tilde{W}_t+4\gamma^{-2}|\tilde{W}_t|^2 }{(\tilde{\kappa}u\gamma^{-2}+(1/2)(1-2\sigma)^2)|\tilde{Z}_t|^2+(1-2\sigma)\gamma^{-1}\tilde{Z}_t\cdot\tilde{W}_t+\gamma^{-2}|\tilde{W}_t|^2}\Big)^{1/2}\leq 1.
\end{aligned}
\end{equation}
Taking expectation and applying \eqref{eq:condition_LGamma2} we obtain
\begin{align*}
\frac{\rmd}{\rmd t}\mathbb{E}[\tilde{r}(t)]&\le -\sigma\gamma \mathbb{E}[\tilde{r}(t)]+2L_{\tilde{g}}u\gamma^{-1}\mathbb{E}[|\tilde{Z}_t|] 
 \le -\sigma\gamma \mathbb{E}[\tilde{r}(t)]+\frac{\sigma\gamma}{2}\mathbb{E}[\sqrt{\tilde{\kappa}u\gamma^{-2}|\tilde{Z}_t|}]\le  -\frac{\sigma\gamma}{2} \mathbb{E}[\tilde{r}(t)].
\end{align*}
Hence by Gr\"{o}nwall's inequality,
\begin{align*}
\mathcal{W}_{1,\tilde{r}}(\bar{\mu}_t,\bar{\nu}_t)\leq e^{-\hat{c}t}\mathbb{E}[\tilde{r}(0)],
\end{align*}
where $\hat{c}$ is given in \eqref{eq:tildec}.
Taking the infimum over all couplings $\omega\in\Pi(\bar{\mu}_0,\bar{\nu}_0)$, we obtain the first bound in $L^1$ Wasserstein distance. The second bound follows by \eqref{eq:norm_equiv} with $M_3$ given in \eqref{eq:M_2}.
\end{proof}


To prove \Cref{thm:propofchaos_unconfined}, we establish a second moment bound of the solution to the nonlinear unconfined Langevin equation.

\begin{lemma}[Moment control for unconfined Langevin dynamics] \label{lem:momentbound_unconfined}
Suppose that \Cref{ass:W} and \eqref{eq:condition_LGamma1} hold. Let $(\bar{X}_t,\bar{Y}_t)_{t\geq 0}$ be a solution to \eqref{eq:KFP_unconf} with initial distribution satisfying \Cref{ass:init_distrib}. 
Then there exists a finite constant $\mathcal{C}_5>0$ such that 
\begin{align*}
\sup_{t\geq 0} \mathbb{E}[|\bar{X}_t|^2]\leq \mathcal{C}_5.
\end{align*}
The constant $\mathcal{C}_5$ depends on $\gamma$, 
$d$, $\tilde{\kappa}$, $L_{\tilde{g}}$, $u$ and on the second moment of the initial distribution.
\end{lemma}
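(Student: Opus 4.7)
The plan is to mimic the Lyapunov-function computation in \Cref{lem:momentbound} but with the twisted $2$-norm $\tilde r$ of \eqref{eq:tilder} replacing $\rho$, exploiting the fact that the mean-field drift effectively behaves like a quadratic confining potential plus a small perturbation once the process is centred.

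First I would observe that under \Cref{ass:init_distrib} and \Cref{ass:W}, $\mathbb{E}[(\bar X_t,\bar Y_t)]=0$ for all $t\ge 0$. Indeed, differentiating the mean and using anti-symmetry of $\tilde g$ together with symmetry of $\tilde K$, both double integrals $\iint \tilde K(x-z)\bar\mu_t^x(\rmd x)\bar\mu_t^x(\rmd z)$ and $\iint \tilde g(x-z)\bar\mu_t^x(\rmd x)\bar\mu_t^x(\rmd z)$ vanish, so $\mathbb{E}[\bar Y_t]$ solves a linear ODE with zero initial value and hence $\mathbb{E}[\bar X_t]\equiv 0$. This rewrites the drift in \eqref{eq:KFP_unconf} as $-\gamma\bar Y_t-u\tilde K\bar X_t+u\int_{\mathbb{R}^d}\tilde g(\bar X_t-z)\bar\mu_t^x(\rmd z)$: the mean-field $\tilde K$ contribution collapses to a linear restoring force, turning \eqref{eq:KFP_unconf} into a quadratic-potential system perturbed by an $L_{\tilde g}$-Lipschitz term.

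Next I would apply Ito's formula to $F(\bar X_t,\bar Y_t):=\bar X_t\cdot(\tilde A\bar X_t)+\bar X_t\cdot(\tilde B\bar Y_t)+\bar Y_t\cdot(\tilde C\bar Y_t)$ with $\tilde A,\tilde B,\tilde C$ from \eqref{eq:matrix_dist_undef}. The deterministic cross terms are handled verbatim as in the proof of \Cref{thm:contr_unconfined}: the choice of $\tilde A$ makes $2\tilde A-\gamma\tilde B-2u\tilde K\tilde C$ vanish, and \eqref{eq:estimate_sigma1} yields $\bar X_t\cdot(-u\tilde K\tilde B)\bar X_t\le -2\sigma\gamma\,\bar X_t\cdot(\tilde A\bar X_t)$. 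Two ingredients are genuinely new compared to the synchronous-coupling computation: (i) an Ito correction $2\gamma u\,\mathrm{tr}(\tilde C)=2du/\gamma$, because $F$ is evaluated along a single trajectory rather than a difference of two processes driven by the same Brownian motion; and (ii) an interaction remainder bounded by $u|\tilde B\bar X_t+2\tilde C\bar Y_t|\cdot L_{\tilde g}(|\bar X_t|+\mathbb{E}|\bar X_t|)$, using $\tilde g(0)=0$ (from anti-symmetry) together with the Lipschitz bound. Taking expectations produces
\begin{equation*}
\frac{\rmd}{\rmd t}\mathbb{E}[F]\le -2\sigma\gamma\,\mathbb{E}[F] + L_{\tilde g}\frac{u}{\gamma}\,\mathbb{E}\bigl[|(1-2\sigma)\bar X_t+2\gamma^{-1}\bar Y_t|(|\bar X_t|+\mathbb{E}|\bar X_t|)\bigr] + \frac{2du}{\gamma}.
\end{equation*}

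The remaining step is to absorb the $L_{\tilde g}$ term into half the dissipation by the same Young-inequality chain as \eqref{eq:apply_cond_Lg}: split the mixed term with $A:=|(1-2\sigma)\bar X_t+2\gamma^{-1}\bar Y_t|$ into $\mathbb{E}[A|\bar X_t|]$ and $\mathbb{E}[A]\mathbb{E}|\bar X_t|$, bound the latter by Cauchy--Schwarz, and apply Young with weight $\alpha=\sqrt{\tilde\kappa u}/\gamma$ so that hypothesis \eqref{eq:condition_LGamma1} gives the matching constants. This leaves $\frac{\rmd}{\rmd t}\mathbb{E}[F]\le -\sigma\gamma\,\mathbb{E}[F]+2du/\gamma$, and Gronwall yields $\sup_{t\ge 0}\mathbb{E}[F]\le \max(\mathbb{E}[F_0],\,2du/(\sigma\gamma^{2}))$. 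Since $\bar X_t\cdot(\tilde A\bar X_t)\ge\gamma^{-2}u\tilde\kappa|\bar X_t|^2$ and the other summands in $F$ are nonnegative, the stated bound follows with $\mathcal{C}_5=\gamma^2\sup_t\mathbb{E}[F]/(u\tilde\kappa)$. The one subtlety I expect to require attention is the nonlocal quantity $\mathbb{E}|\bar X_t|$ inside the expectation, but it is tamed by the same Cauchy--Schwarz/Young trick already used in \eqref{eq:apply_cond_Lg}; the positive Ito correction $2du/\gamma$ is precisely what prevents a vanishing bound and reflects that the centred process fluctuates around the origin rather than collapsing to it.
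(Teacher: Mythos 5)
Your proposal is correct and matches the paper's own argument: the paper likewise first deduces $\mathbb{E}[(\bar{X}_t,\bar{Y}_t)]=0$ from anti-symmetry of $\tilde{g}$ and \Cref{ass:init_distrib}, applies Ito's formula to the same twisted quadratic form built from $\tilde{A},\tilde{B},\tilde{C}$ of \eqref{eq:matrix_dist_undef}, bounds the mean-field remainder by $L_{\tilde{g}}(|\bar{X}_t|+\mathbb{E}[|\bar{X}_t|])$, absorbs it via \eqref{eq:condition_LGamma1} and Young's inequality exactly as in \eqref{eq:apply_cond_Lg}, and concludes by Gr\"onwall with the Ito correction $2\gamma^{-1}ud$ as the source term. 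No substantive differences to report.
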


\begin{proof}
As in the proof of \Cref{lem:momentbound}, we adapt the proof idea from \cite[Lemma 8]{DuEbGuZi20}. First, we note that by \Cref{ass:W} and \Cref{ass:init_distrib}, 
$\mathbb{E}[\bar{X}_t]=\mathbb{E}[\bar{Y}_t]=0$ for all $t\geq 0$, 
since by anti-symmetry of $\tilde{g}$
\begin{align*}
\frac{\rmd }{\rmd t}\mathbb{E}[\bar{X}_t]=\mathbb{E}[\bar{Y}_t] ,\quad \frac{\rmd}{\rmd t}\mathbb{E}[\bar{Y}_t]=-\gamma\mathbb{E}[\bar{Y}_t],
\end{align*} 
and $\mathbb{E}[\bar{X}_0]=\mathbb{E}[\bar{Y}_0]=0$.
Hence, $\bar{X}_t\cdot\mathbb{E}[\bar{X}_t] =\bar{Y}_t\cdot\mathbb{E}[\bar{X}_t]=0$. Further, we bound $|\mathbb{E}_{x\sim\bar{\mu}_t}[\tilde{g}(\bar{X}_t,x)]|\leq L_{\tilde{g}} (|\bar{X}_t|+\mathbb{E}[|\bar{X}_t|])$. 
By Ito's formula and \Cref{ass:W}, it holds for $\sigma\in(0,1/2)$,
\begin{align}
&\rmd (\gamma^{-2}u\bar{X}_t\cdot (\tilde{K}\bar{X}_t)+(1/2)|(1-2\sigma)\bar{X}_t+\gamma^{-1}\bar{Y}_t|^2+(1/2)\gamma^{-2}|\bar{Y}_t|^2) \nonumber
\\& \leq (2\gamma^{-2}u\bar{X}_t\cdot(\tilde{K}\bar{Y}_t)+(1-2\sigma)^2\bar{X}_t\cdot\bar{Y}_t) \rmd t  +(1-2\sigma)\gamma^{-1}(|\bar{Y}_t|^2-\bar{X}_t\cdot (u\tilde{K}\bar{X}_t)-\gamma\bar{X}_t\cdot\bar{Y}_t)\rmd t \nonumber
\\ & +\gamma^{-2}(-2\gamma|\bar{Y}_t|^2-2(u\tilde{K}\bar{Y})_t\cdot\bar{X}_t) \rmd t  +L_{\tilde{g}}u|(1-2\sigma)\gamma^{-1}\bar{X}_t+2\gamma^{-2}\bar{Y}_t|(|\bar{X}_t|+\mathbb{E}[|\bar{X}_t|]) \rmd t
\nonumber
\\ & +2\gamma^{-1} u d \rmd t+\sqrt{2\gamma^{-1}u }((1-2\sigma)\bar{X}_t+2\gamma^{-1}\bar{Y}_t) \rmd B_t \nonumber 
\\ & \leq -(1-2\sigma)\gamma^{-1}u\bar{X}_t\cdot (\tilde{K}\bar{X}_t)\rmd t  -2\sigma\gamma ((1-2\sigma)\gamma^{-1}\bar{X}_t\cdot\bar{Y}_t+\gamma^{-2}|\bar{Y}_t|^2)\rmd t  +2\gamma^{-1}ud \rmd t \nonumber
\\ & +L_{\tilde{g}}u|(1-2\sigma)\gamma^{-1}\bar{X}_t+2\gamma^{-2}\bar{Y}_t|(|\bar{X}_t|+\mathbb{E}[|\bar{X}_t|]) \rmd t+\sqrt{2\gamma^{-1}u}((1-2\sigma)\bar{X}_t+2\gamma^{-1}\bar{Y}_t) \rmd B_t. \nonumber
\end{align} 
Then by \eqref{eq:estimate_sigma1} we obtain after taking expectation
\begin{align*}
\frac{\rmd}{\rmd t}&\mathbb{E}[\gamma^{-2}u\bar{X}_t\cdot (\tilde{K}\bar{X}_t)+\frac{1}{2}|(1-2\sigma)\bar{X}_t+\gamma^{-1}\bar{Y}_t|^2+\frac{1}{2}\gamma^{-2}|\bar{Y}_t|^2]
\\ & \leq -2\sigma\gamma\mathbb{E}[\gamma^{-2}u\bar{X}_t\cdot (\tilde{K}\bar{X}_t)+\frac{1}{2}|(1-2\sigma)\bar{X}_t+\gamma^{-1}\bar{Y}_t|^2+\frac{1}{2}\gamma^{-2}|\bar{Y}_t|^2]+2\gamma^{-1} ud
\\ & \quad +L_{\tilde{g}}u\gamma^{-1}\mathbb{E}[|(1-2\sigma)\bar{X}_t+2\gamma^{-1}\bar{Y}_t|(|\bar{X}_t|+\mathbb{E}[|\bar{X}_t|])].
\end{align*}
By \eqref{eq:condition_LGamma1} and Young's inequality, we bound the last term similarly as \eqref{eq:apply_cond_Lg} by
\begin{align*}
L_{\tilde{g}}u\gamma^{-1}\mathbb{E}[|(1-2\sigma)\bar{X}_t+2\gamma^{-1}\bar{Y}_t|&(|\bar{X}_t|+\mathbb{E}[|\bar{X}_t|])] 
\\ & \le \sigma\gamma\Big(\tilde{\kappa}u\gamma^{-2}\mathbb{E}[|\bar{X}_t|^2]+\frac{1}{2}\mathbb{E}[|(1-2\sigma)\bar{X}_t+\gamma^{-1}\bar{Y}_t|^2]+\frac{1}{2}\mathbb{E}[\bar{Y}_t|^2]\Big).
\end{align*}
Hence,
\begin{align*}
\frac{\rmd}{\rmd t}&\mathbb{E}\Big[\gamma^{-2}u\bar{X}_t\cdot (\tilde{K}\bar{X}_t)+\frac{1}{2}|(1-2\sigma)\bar{X}_t+\gamma^{-1}\bar{Y}_t|^2+\frac{1}{2}\gamma^{-2}|\bar{Y}_t|^2\Big]
 \\ & \leq -\sigma\gamma\mathbb{E}\Big[\gamma^{-2}u\bar{X}_t\cdot (\tilde{K}\bar{X}_t)+\frac{1}{2}|(1-2\sigma)\bar{X}_t+\gamma^{-1}\bar{Y}_t|^2+\frac{1}{2}\gamma^{-2}|\bar{Y}_t|^2\Big]+2\gamma^{-1}ud.
\end{align*}
Then by Gr\"{o}nwall's inequality, there exists a constant $\mathbf{C}$ such that
\begin{align*}
\sup_{t\geq0}\mathbb{E}[\gamma^{-2}u\bar{X}_t\cdot (\tilde{K}\bar{X}_t)\frac{1}{2}|(1-2\sigma)\bar{X}_t+\gamma^{-1}\bar{Y}_t|^2+\frac{1}{2}\gamma^{-2}|\bar{Y}_t|^2]\leq \mathbf{C}<\infty
\end{align*}
and we obtain the result for $\mathcal{C}_5=\mathbf{C}/({\tilde{\kappa}}\gamma^{-2}u)$.
\end{proof}

\begin{proof}[Proof of \Cref{thm:propofchaos_unconfined}]
We consider a synchronous coupling approach of solutions to \eqref{eq:KFP_unconf} and \eqref{eq:KFP_meanfield_nongradient} with $b\equiv0$. Fix $N\in\mathbb{N}$. Let $\{(B^{i}_t)_{t\geq 0}\}_{i=1}^N$ be $N$ independent $d$-dimensional Brownian motions and let $\mu_0$ and $\bar{\mu}_0$ be two probability measrues on $\mathbb{R}^{2d}$.
The coupling $(\{(\bar{X}_t^i,\bar{Y}_t^i),(X_t^{i},Y_t^{i})\}_{i=1}^N)_{t\geq 0}$ of $N$ copies of a solution to \eqref{eq:KFP_unconf} and a solution to \eqref{eq:KFP_meanfield_nongradient} with $b\equiv 0$ is given on $\mathbb{R}^{2Nd}\times\mathbb{R}^{2Nd}$ by
\begin{equation} \label{eq:KFP_coupling_unconfined}
\begin{aligned}
&\begin{cases} 
\rmd \bar{X}_t^i&=\bar{Y}_t^i \rmd t
\\  \rmd \bar{Y}_t^i&=(-\gamma\bar{Y}_t^i+u\int_{\mathbb{R}^d}\tilde{b}(\bar{X}_t^i,z)\bar{\mu}_t^x (\rmd z))\rmd t +\sqrt{2\gamma u}\rmd B_t^{i}, \qquad (\bar{X}_0^i,\bar{Y}_0^i)\sim\bar{\mu}_0,
\end{cases}
\\
&\begin{cases}
\rmd X_t^{i}&=Y_t^{i} \rmd t
\\  \rmd Y_t^{i}&=(-\gamma Y_t^{i}+uN^{-1}\sum_{j=1}^N\tilde{b}(X_t^{i},X_t^{j}))\rmd t+\sqrt{2\gamma u}\rmd B_t^{i},  \qquad (X_0^i,Y_0^i)\sim \mu_0
\end{cases}
\end{aligned}
\end{equation}
for $i=1,...,N$, where $\bar{\mu}_t^x=\Law(\bar{X}_t^i)$ for all $i$.
For simplicity, we omitted the parameter $N$ in the index of $(X_t^i,Y_t^i)$ in the particle model. We set $\tilde{Z}_t^i=\bar{X}_t^i-X_t^{i}-N^{-1}\sum_{j=1}^N(\bar{X}_t^j-X_t^{j})$ and $\tilde{W}_t^i=\bar{Y}_t^i-Y_t^{i}-N^{-1}\sum_{j=1}^N(\bar{Y}_t^j-Y_t^{j})$. By \Cref{ass:W}, the process $(\{\tilde{Z}_t^i,\tilde{W}_t^i\}_{i=1}^N)_{t\geq 0}$ satisfies 
\begin{equation} \label{eq:differenceproc_unconfined}
\begin{aligned}
\begin{cases}
\rmd \tilde{Z}_t^i&=\tilde{W}_t^i \rmd t
\\ \rmd \tilde{W}_t^i&=-\gamma \tilde{W}_t^i\rmd t +u\Big(\int_{\mathbb{R}^d}\tilde{b}(\bar{X}_t^i,z)\bar{\mu}_t^x (\rmd z)-N^{-1}\sum_{j=1}^N\int_{\mathbb{R}^d}\tilde{b}(\bar{X}_t^j,\tilde{z})\bar{\mu}_t^x (\rmd \tilde{z})
\\ & \quad -N^{-1}\sum_{j=1}^N\tilde{b}(X_t^{i},X_t^{j})
 +N^{-2}\sum_{j,k=1}^N\tilde{b}(X_t^{j},X_t^{k})\Big)\rmd t
\\ & =-\gamma \tilde{W}_t^i\rmd t+u\Big(-\tilde{K}\tilde{Z}_t^i+N^{-1}\sum_{j=1}^N(\tilde{g}(\bar{X}_t^i-\bar{X}_t^j)-\tilde{g}(X_t^i-X_t^j))+\tilde{A}_t^i+N^{-1}\sum_{j=1}^N\tilde{A}_t^j\Big)\rmd t,
\end{cases}
\end{aligned}
\end{equation}
%
where $\tilde{A}_t^k=\int_{\mathbb{R}^d}\tilde{b}(\bar{X}_t^k,z)\bar{\mu}_t^x(\rmd z)-N^{-1}\sum_{j=1}^N\tilde{b} (\bar{X}_t^k,\bar{X}_t^j)$ for all $k=1,...,N$.
Hence, for the positive definite matrices $\tilde{A}, \tilde{B}, \tilde{C}$ given in \eqref{eq:matrix_dist_undef},
we obtain for $i=1,...,N$,
\begin{align*}
\rmd &(\tilde{Z}_t^i\cdot (\tilde{A}\tilde{Z}_t^i)+\tilde{Z}_t^i\cdot(\tilde{B}\tilde{W}_t^i)+\tilde{W}_t^i\cdot(\tilde{C}\tilde{W}_t^i)
\\ & \le  2\tilde{Z}_t^i\cdot(\tilde{A}\tilde{W}_t^i)\rmd t+( \tilde{W}_t^i\cdot(\tilde{B}\tilde{W}_t^i)-\gamma\tilde{Z}_t^i\cdot(\tilde{B}\tilde{W}_t^i)-(\tilde{B}\tilde{Z}_t^i)\cdot(u \tilde{K}\tilde{W}_t^i))\rmd t +(\tilde{C}W_t^i)\cdot(-2\gamma W_t^i-2u \tilde{K}\tilde{Z}_t^i)\rmd t 
\\ & + |\tilde{B}\tilde{Z}_t^i+2\tilde{C}\tilde{W}_t^i|u \Big(L_{\tilde{g}}N^{-1}\sum_{j=1}^N(|\tilde{Z}_t^j|+|\tilde{Z}_t^i|)+A_t^i+N^{-1}\sum_{i=j}^NA_t^j\Big)\rmd t
\\& \le \Big(-(u \tilde{K}\tilde{Z}_t^i)\cdot(\tilde{B}\tilde{Z}_t^i)+\tilde{Z}_t^i\cdot((2\tilde{A}-\gamma\tilde{B}-2u \tilde{K}\tilde{C})\tilde{W}_t^i)+\tilde{W}_t^i\cdot((\tilde{B}-2\gamma\tilde{C})\tilde{W}_t^i)\Big)\rmd t
\\ & + |\tilde{B}\tilde{Z}_t^i+2\tilde{C}\tilde{W}_t^i|u\Big(L_{\tilde{g}}N^{-1}\sum_{j=1}^N(|\tilde{Z}_t^j|+|\tilde{Z}_t^i|)+A_t^i+N^{-1}\sum_{i=j}^NA_t^j\Big)\rmd t,
\end{align*}
where $A_t^k=|\int_{\mathbb{R}^d}\tilde{b}(\bar{X}_t^k,z)\bar{\mu}_t^x(\rmd z)-N^{-1}\sum_{j=1}^N\tilde{b} (\bar{X}_t^k,\bar{X}_t^j)|$ for all $k=1,...,N$.
Then by \eqref{eq:unconf_contr_proof} for $\tilde{r}^i(t)=\tilde{r}((\bar{X}_t^i,\bar{Y}_t^i),(X_t^i,Y_t^i))$
\begin{equation}\label{eq:proof_propofchaos_unconf2}
\begin{aligned}
\rmd & \tilde{r}^i(t)^2  = \rmd (\tilde{Z}_t^i\cdot (\tilde{A}\tilde{Z}_t^i)+\tilde{Z}_t^i\cdot(\tilde{B}\tilde{W}_t^i)+\tilde{W}_t^i\cdot (\tilde{C}\tilde{W}_t^i))
\\ & \leq -2\sigma\gamma \tilde{r}^i(t)^2\rmd t+\gamma^{-1} |(1-2\sigma)\tilde{Z}_t^i+2\gamma^{-1}\tilde{W}_t^i|\Big(L_{\tilde{g}}N^{-1}\sum_{j=1}^N(|\tilde{Z}_t^j|+|\tilde{Z}_t^i|)+A_t^i+N^{-1}\sum_{i=j}^NA_t^j\Big)\rmd t
\end{aligned}
\end{equation}
and hence, for $\hat{\rho}_t:=\hat{\rho}_N((X_t,Y_t),(\bar{X}_t,\bar{Y}_t))$ given in \eqref{eq:hatrho_N},
\begin{align} \label{eq:proof_propofchaos_unconf}
\rmd \hat{\rho}_t\leq -2\sigma\gamma \hat{\rho}_t\rmd t+\frac{u}{\gamma}
N^{-1}\sum_{i=1}^N\Big( |(1-2\sigma)\tilde{Z}_t^i+2\gamma^{-1}\tilde{W}_t^i|\Big(L_{\tilde{g}}N^{-1}\sum_{j=1}^N(|\tilde{Z}_t^j|+|\tilde{Z}_t^i|)+A_t^i+N^{-1}\sum_{j=1}^NA_t^j\Big)\Big)\rmd t.
\end{align}
For the last term, we obtain by \eqref{eq:condition_LGamma1} and Young's inequality
\begin{align*}
L_{\tilde{g}}u\gamma^{-1}\frac{1}{N^2}\sum_{i,j=1}^N|(1-2\sigma)\tilde{Z}_t^i&+2\gamma^{-1}\tilde{W}_t^i|(|\tilde{Z}_t^j|+|\tilde{Z}_t^i|)
\le \sigma\gamma \hat{\rho}_t
\end{align*}
similarly as in \eqref{eq:apply_cond_Lg} and 
\begin{align*}
\frac{u}{\gamma}\frac{1}{N^2}\sum_{i,j=1}^N|(1-2\sigma)\tilde{Z}_t^i+2\gamma^{-1}\tilde{W}_t^i|(A_t^i+A_t^j)
&\le \frac{\sigma\gamma}{2}\frac{1}{N}\sum_{i=1}^N\Big( \frac{1}{4}|(1-2\sigma)\tilde{Z}_t^i+2\gamma^{-1}\tilde{W}_t^i|^2\Big)+\frac{8u^2}{\gamma^3\sigma}\frac{1}{N}\sum_{i=1}^N (A_t^i)^2
\\ & \le \frac{\sigma\gamma}{2}\hat{\rho}_t+\frac{8}{\gamma^3\sigma}\frac{1}{N}\sum_{i=1}^N (A_t^i)^2.
\end{align*}
Inserting these estimates in \eqref{eq:proof_propofchaos_unconf} and taking expectation yields
\begin{align*}
\frac{\rmd}{\rmd t} \mathbb{E}[\hat{\rho}_t]\leq -\frac{\sigma\gamma}{2} \mathbb{E}[\hat{\rho}_t]+\frac{8}{\gamma^3\sigma}\frac{1}{N}\sum_{i=1}^N \mathbb{E}[(A_t^i)^2].
\end{align*}
We bound $\mathbb{E}[{A_t^i}^2]$ similar as in the proof of \Cref{thm:propofchaos}. Note that by \Cref{ass:W}, $\tilde{b}$ is Lipschitz continuous with a Lipschitz constant which is bounded from above by $L_{\tilde{K}}+L_{\tilde{g}}$. Hence, \eqref{eq:boundA_t^i1} and \eqref{eq:boundA_t^i2} hold here with $L_{\tilde{K}}+L_{\tilde{g}}$ instead of $\tilde{L}$.
Then,
\begin{align*}
\mathbb{E}[{A_t^i}^2]&\leq \mathbb{E}\Big[\Big|\int_{\mathbb{R}^d}\tilde{b}(\bar{X}_t^i,z)\bar{\mu}_t^x(\rmd z)-\frac{1}{N}\sum_{j=1}\tilde{b}(\bar{X}_t^i,\bar{X}_t^j)\Big|^2\Big]
 \leq \frac{16(L_{\tilde{K}}+L_{\tilde{g}})^2}{N}\int_{\mathbb{R}^d}|x|^2\bar{\mu}_t(\rmd x).
\end{align*}
By \Cref{lem:momentbound_unconfined}, there exists a constant $\mathcal{C}_6$ depending on $\gamma$, $\mathbb{E}[|\bar{X}_0|^2+|\bar{Y}_0|^2]$, $d$, $\tilde{\kappa}$, $L_{\tilde{K}}$, $L_{\tilde{g}}$, $u$ such that for $N\geq 2$ and $i=1,...,N$,
\begin{align*}
\sup_{t\geq 0}\mathbb{E}[{A_t^i}^2]\leq \mathcal{C}_6N^{-1}.
\end{align*}
Hence,
\begin{align*}
\frac{\rmd}{\rmd t}\mathbb{E}[\hat{\rho}_t^2]\leq -2\sigma\gamma\mathbb{E}[\hat{\rho}_0^2]+ \mathcal{C}_3^2N^{-1}/2,
\end{align*}
where $\mathcal{C}_3^2=\frac{8 u^2}{\sigma\gamma^3}\mathcal{C}_6$.
By Gr\"{o}nwall's inequality, 
\begin{align*}
\mathcal{W}_{2,\hat{\rho}_N}(\Law(X_t^1,...,X_t^N),(\bar{\mu}_t)^{\otimes N})^2\le\mathbb{E}[\hat{\rho}_t^2]\leq e^{-\hat{c}t}\mathbb{E}[\hat{\rho}_0^2]+\hat{c}^{-1}\mathcal{C}_3^2N^{-1}
\end{align*}
with $\hat{c}$ given in \eqref{eq:tildec}.
By taking the infimum over all couplings $\omega\in\Pi(\mu_0^N,\bar{\mu}_0^{\otimes N})$, we obtain the first result in $L^2$ Wasserstein distance. The second bound holds by \eqref{eq:norm_equiv} with $M_3$ and $M_4$ given by \eqref{eq:M_2} and \eqref{eq:M5}, respectively.
To obtain the bound in $L^1$ Wasserstein distance, we note that by \eqref{eq:proof_propofchaos_unconf2}
\begin{align} 
\rmd \tilde{r}^i(t)&=\frac{1}{2r^i(t)}\rmd \tilde{r}^i(t)^2\leq -\sigma\gamma\tilde{r}^i(t)\rmd t +\frac{|(1-2\sigma)\tilde{Z}_t^i+2\gamma^{-1}\tilde{W}_t^i|}{2\gamma\tilde{r}^i(t)}u\Big(\frac{L_{\tilde{g}}}{N}\sum_j(|\tilde{Z}_t^j|+|\tilde{Z}_t^i|)+A_t^i+\frac{1}{N}\sum_{j=1}^NA_t^j\Big)\rmd t \nonumber
\\ & \leq  -\sigma\gamma\tilde{r}^i(t)\rmd t +\gamma^{-1}u\Big(\frac{L_{\tilde{g}}}{N}\sum_j(|\tilde{Z}_t^j|+|\tilde{Z}_t^i|)+A_t^i+\frac{1}{N}\sum_{j=1}^NA_t^j\Big)\rmd t, \nonumber
\end{align}
where the last step holds by \eqref{eq:prefactor_interaction}.
By summing over $i$ and taking expectation, we obtain by \eqref{eq:condition_LGamma2} for $\tilde{\rho}_t:=\tilde{\rho}_N((X_t,Y_t),(\bar{X}_t,\bar{Y}_t))$ given in \eqref{eq:tilerho_N},
\begin{align*}
\frac{\rmd}{\rmd t}\mathbb{E}[\tilde{\rho}_t]\leq -\sigma\gamma/2\mathbb{E}[\tilde{\rho}_t]+\gamma^{-1}uN^{-1}\sum_{i=1}^N\mathbb{E}[A_t^i].
\end{align*}
By \Cref{ass:W} and \Cref{lem:momentbound_unconfined}, there exists a constant $\mathcal{C}_4$ depending on $\gamma$, $\mathbb{E}[|\bar{X}_0|^2+|\bar{Y}_0|^2]$, $d$, $\tilde{\kappa}$, $L_{\tilde{K}}$, $u$ and $L_{\tilde{g}}$ such that
\begin{align*}
\sup_{t\ge 0} \mathbb{E}[A_t^i]\le \mathcal{C}_4\gamma N^{-1/2}
\end{align*}
similarly as in \eqref{eq:boundA_t^i}.  
Hence,
\begin{align*}
\frac{\rmd}{\rmd t}\mathbb{E}[\tilde{\rho}_t]\leq -\frac{\sigma\gamma}{2}\mathbb{E}[\tilde{\rho}_t]+\mathcal{C}_4N^{-1/2}.
\end{align*}
By Gr\"{o}nwall's  inequality,
\begin{align*}
\mathcal{W}_{1,\tilde{\rho}_N}(\bar{\mu}_t^{\otimes N}, \mu_t^N) \le\mathbb{E}[\tilde{\rho}_t]\leq e^{-\hat{c}t}\mathbb{E}[\tilde{\rho}_0]+\hat{c}^{-1}\mathcal{C}_4N^{-1/2}
\end{align*}
for $\hat{c}$ given in \eqref{eq:tildec}. 
Taking the infimum over all couplings $\omega\in\Pi(\bar{\mu}_0^{\otimes N},\mu_0^N)$, we obtain the  first result in $L^1$~Wasserstein distance. The second bound holds by \eqref{eq:norm_equiv} with $M_3$ and $M_4$ given in \eqref{eq:M_2} and \eqref{eq:M5}.

\end{proof}

}

 \paragraph*{Acknowledgments}
The author would like to thank her supervisor Andreas Eberle for bringing up the idea of glueing two metrics to combine two local contraction results and for his support and advice during the development of this work. \\
Support by the \textit{Hausdorff Center for Mathematics} has been gratefully acknowledged. 
Gef\"ordert durch die Deutsche Forschungsgemeinschaft (DFG) im Rahmen der Exzellenzstrategie des Bundes und der L\"ander - GZ 2047/1, Projekt-ID 390685813.

\bibliographystyle{plain}
\bibliography{bibliography2}

\end{document}